\newcommand{\F}{{\mathbb{F}}}
\newcommand{\Z}{{\mathbb{Z}}}
\newcommand{\Q}{{\mathbb{Q}}} 
\newcommand{\QQ}{\overline{{\mathbb{Q}}}}
\newcommand{\q}{{\mathfrak{q}}}
\newcommand{\p}{{\mathfrak{p}}}
\newcommand{\OO}{{\mathcal{O}}}
\newcommand{\Gal}{\mathrm{Gal}}
\newcommand{\Ker}{\mathrm{Ker}}
\newcommand*{\rom}[1]{\expandafter\@slowromancap\romannumeral #1@}
\newcommand\isomto{\stackrel{\sim}{\smash{\longrightarrow}\rule{0pt}{0.4ex}}}
\DeclareFontFamily{U}{wncy}{}
    \DeclareFontShape{U}{wncy}{m}{n}{<->wncyr10}{}
    \DeclareSymbolFont{mcy}{U}{wncy}{m}{n}
    \DeclareMathSymbol{\Sh}{\mathord}{mcy}{"58}
\theoremstyle{plain}
\newtheorem{theorem}{Theorem}[section]
\newtheorem*{theorem*}{Theorem}
\newtheorem{proposition}[theorem]{Proposition}
\newtheorem{rem}[theorem]{Remark}
\newtheorem{lemma}[theorem]{Lemma}
\newtheorem{corollary}[theorem]{Corollary}
\newtheorem{defn}[theorem]{Definition}
\begin{document}
\title{$3$-Selmer groups, ideal class groups and the cube sum problem} 

\author{Somnath Jha, Dipramit Majumdar and Pratiksha Shingavekar}
\address{\begin{scriptsize}Somnath Jha, \href{mailto:jhasom@iitk.ac.in}{jhasom@iitk.ac.in}, Indian Institute of Technology Kanpur, India\end{scriptsize}}
\address{\begin{scriptsize}Dipramit Majumdar, \href{mailto:dipramit@iitm.ac.in}{dipramit@iitm.ac.in}, Indian Institute of Technology Madras, India\end{scriptsize}}
\address{\begin{scriptsize}Pratiksha Shingavekar, \href{mailto:pshingavekar@gmail.com}{pshingavekar@gmail.com}, Chennai Mathematical Institute, India\end{scriptsize}}
\keywords{Elliptic curves, 3-Selmer groups, Ideal class groups, Sylvester's conjecture, }
\subjclass[2020]{Primary 11G05, 11R29, 11R34; Secondary 11G40, 11S25}

\begin{abstract} 
  Consider a Mordell curve \begin{small}$E_a:y^2=x^3+a$\end{small} with \begin{small}$a \in \Z$\end{small}. These curves have a rational $3$-isogeny, say \begin{small}$\varphi$.\end{small} 
We give an upper and a lower bound on the rank of the \begin{small}$\varphi$\end{small}-Selmer group of \begin{small}$E_a$\end{small} over \begin{small}$\Q(\zeta_3)$\end{small}   in terms of the  $3$-part of the ideal class group of certain quadratic extension of \begin{small}$\Q(\zeta_3)$.\end{small}  Using our bounds on the Selmer groups, we prove some cases of the rational cube sum problem. Further, using these bounds, we give  explicit families of the Mordell curves to show that for a positive proportion of \begin{small}$E_a$, ${\rm Sel}^3(E_{a}/\Q)=0$\end{small} (respectively \begin{small}${\rm Sel}^3(E_{a}/\Q)$\end{small} has \begin{small}$\mathbb F_3$\end{small}-rank $1$).
\end{abstract}

\maketitle

\section*{Introduction}\label{intro}
Given an elliptic curve \begin{small}$E$\end{small} over a number field \begin{small}$F$\end{small} and an isogeny \begin{small}$\varphi:E \rightarrow E$,\end{small} the study of \begin{small}$\varphi$\end{small}-Selmer group of \begin{small}$E$\end{small} over \begin{small}$F$\end{small} (Definition \ref{mainsel}) goes back to Cassels \cite{cass} and has many important contributions. The recent works of Bhargava et al., Mazur-Rubin and others on the \begin{small}$n$\end{small}-Selmer group of an elliptic curve have made a significant impact on the arithmetic of elliptic curves. In \cite{bes}, they determine the average size of the \begin{small}$\phi_a$\end{small}-Selmer group of \begin{small}$E_a: Y^2=X^3+a$\end{small} over \begin{small}$\Q$,\end{small} where \begin{small}$a \in \Z$\end{small} varies and \begin{small}$\phi_a$\end{small} is a rational $3$-isogeny (see \S \ref{iso}). On the other hand, giving an upper bound to an isogeny-induced Selmer group by the ideal class group has been investigated by Cassels, Brumer-Kramer  and others. More generally, using the upper bound, \cite{ss} has outlined a strategy to compute, at least in principle,  the isogeny-induced Selmer group of elliptic curves. The relation between Selmer groups and class groups has also been explored by \cite{ces, ps}. Recently, given an elliptic curve \begin{small}$E: Y^2=F(X)$\end{small} over \begin{small}$\Q$,\end{small} such that the cubic \begin{small}$F(X)$\end{small} is irreducible and has negative, square-free discriminant, Li \cite[Theorem 2.18]{li} has given an  upper and a lower bound for the $2$-Selmer group of \begin{small}$E/\Q$,\end{small} in terms of the $2$-torsion of the ideal class group of \begin{small}$\Q[X]/(F(X))$.\end{small}

Let \begin{small}$E/\Q$\end{small} be an elliptic curve and \begin{small}$\varphi=\varphi_{E,\widehat{E}}: E \rightarrow \widehat{E}$\end{small} be a rational isogeny of degree $3$. Then the equation of \begin{small}$E$\end{small} can be expressed either in the form  \begin{small}$E_a: Y^2=X^3+a$,\end{small} with \begin{small}$a \in \Z$\end{small} or \begin{small}$E_{a,b}: Y^2=X^3+a(X-b)^2$,\end{small} with \begin{small}$a,b \in \Z$\end{small} and \begin{small}$ab(4a+27b)\neq 0$\end{small} and the isogeny \begin{small}$\varphi$\end{small} between two curves of the form \begin{small}$E_a$\end{small} (respectively, \begin{small}$E_{a,b}$\end{small}) can also be described explicitly (see \S\ref{iso}). In this article, we restrict ourselves to the study of the curves \begin{small}$E_a: Y^2=X^3+a$\end{small} and their associated $3$-isogenies, denoted by \begin{small}$\phi_a$\end{small}.  Let us assume for now that \begin{small}$a$\end{small} is not a square in \begin{small}$K=\Q(\zeta_3)$.\end{small} (In \S\ref{type1curves}, we also consider  \begin{small}$a \in K^{*2}$\end{small}.) Our main goal here is to give an upper bound and a lower bound of the \begin{small}$\mathbb F_3$\end{small}-dimension of \begin{small}$\text{Sel}^{\phi_a}(E_a/K)$,\end{small}  the \begin{small}$\phi_a$\end{small}-Selmer group of \begin{small}$E_a$\end{small}  over \begin{small}$K$\end{small}  in terms of the $3$-part of the \begin{small}$S$\end{small}-class group of \begin{small}$\Q(\zeta_3, \sqrt a)$.\end{small} Here \begin{small}$S$\end{small} is a subset of the set of primes of bad reduction of \begin{small}$E_a$\end{small} over \begin{small}$K$\end{small} (Definitions \ref{defofSa}). Using such bounds for \begin{small}$\phi_a$-\end{small} and \begin{small}$\widehat{\phi}_a$-\end{small}Selmer groups, we also get the corresponding upper and lower bounds for the $3$-Selmer group \begin{small}$\text{Sel}^3(E_a/K)$.\end{small} We also indicate the  upper bounds for \begin{small}$\dim_{\F_3}{\rm Sel}^{\phi_a}(E_a/\Q)$\end{small} and \begin{small}$\dim_{\F_3}{\rm Sel}^3(E_a/\Q)$,\end{small} in terms of the $3$-class group of suitable quadratic fields, via similar methods. The curves \begin{small}$E_{a,b}$\end{small} and their $3$-isogeny Selmer groups are a topic of study for a companion article. 

The first major result of this article, giving such upper and lower bounds, is Theorem \ref{type1selmer}. We then go on to sharpen these bounds using various arithmetic properties of elliptic curves.  We present a special case of such refined bounds for the curves $E_a$ below (see Theorem \ref{type1bounds} for a detailed statement). 
\begin{theorem}[Theorem \ref{type1bounds}]
Suppose that \begin{small}$a \not\in K^{*2}$\end{small} and the set \begin{small}$S_a$\end{small} defined in \ref{defofSa} is empty. Let \begin{small}$L_a:=K[X]/(X^2-a)$\end{small} and \begin{small}$h^3_{L_a}$\end{small} be the $\F_3$-rank of the $3$-part of the class group of $L_a$. Then \begin{small}$\dim_{\F_3} {\rm Sel}^{\phi_a}(E_a/K) \in \{h^3_{L_a}, h^3_{L_a}+1\}$\end{small} and it is uniquely determined by the root number of \begin{small}$E_a/\Q$.\end{small} \qed
\end{theorem}
 Since there is a rational isogeny \begin{small}$\phi_a: E_a \rightarrow \widehat{E}_a$\end{small} of degree $3$, the residual representation \begin{small}$E_a[3](\QQ)$\end{small} is a reducible \begin{small}$G_\Q$\end{small}-module and giving a $3$-isogeny is equivalent to having  a \begin{small}$G_\Q$\end{small}-stable subgroup of order $3$ in \begin{small}$E_a(\QQ)$\end{small} and we may assume that this subgroup of order $3$ is generated by \begin{small}$(0, \pm \sqrt a)$\end{small}. Thus, corresponding to \begin{small}$\text{Sel}^{\phi_a}(E_a/K)$,\end{small} one naturally associates the quadratic \'etale algebra \begin{small}$L_a:=K[X]/(X^2-a)$.\end{small} An important  observation is  that  \begin{small}$H^1(G_K, E_a[\phi_a]) \cong  \text{Ker}(L^*_a/{L^{*3}_a} \stackrel{\text{Norm}}{\longrightarrow} K^*/{K^{*3}})$\end{small} (Prop. \ref{cohom}) and there is a similar description of the local Galois cohomology groups appearing in the definition of \begin{small}$\text{Sel}^{\phi_a}(E_a/K)$.\end{small}  It then reduces to do explicit  computations of the images of the local Kummer maps and apply some  algebraic number theory  (see \S \ref{ant},  Props. \ref{boundsgen} and  \ref{newM1M2sel}) to arrive  at the bounds. 
 
There are several novel aspects of the bounds given here (see \S\ref{type1curves} for details). For example, we introduce sharp lower (and upper) bounds for \begin{small}$\text{Sel}^{\phi_a}(E_a/K)$\end{small}  in terms of $3$-class groups via more explicit and possibly easier computations and thus we get a significant refinement and improvement on the existing results. Further, the computation of the Kummer map at \begin{small}$\p \mid 3$\end{small} for the \begin{small}$\phi_a$\end{small}-Selmer group is always more delicate and here, to avoid calculating the explicit generators for the image of the Kummer map at \begin{small}$\p \mid 3$\end{small} of \begin{small}$K$,\end{small} we do the following: For the lower bound, we  introduce two new sets \begin{small}$V_{K_\p}$\end{small} and \begin{small}$V_3$\end{small} defined using the unit group \begin{small}$\OO^*_{K_\p}$\end{small} (Definitions \ref{defnofVF}, \ref{defnofV3}). As for the upper bound, we consider the norm-$1$ subgroup of  \begin{small}$A_\p^*/{A_\p^{*3}}$\end{small} (Definition \ref{defnofAq}). Note that to give a (possibly) non-trivial lower bound for \begin{small}$\text{Sel}^{\phi_a}(E_a/K)$\end{small} using class groups, it is essential that \begin{small}$L_a$\end{small} contains \begin{small}$\zeta_3$\end{small} (see Props. \ref{boundsgen}) and hence we primarily base our results for Selmer groups  defined over \begin{small}$K$\end{small} rather than \begin{small}$\Q$.\end{small} 
 
 In order to demonstrate that the bounds obtained using $3$-class groups are sharp, we give some examples (see Examples 1-4 in \S \ref{type1boundsec}) where \begin{small}$\dim_{\F_3}{\rm Sel}^3(E_a/K)$\end{small} and \begin{small}$\dim_{\F_3}{\rm Sel}^{\phi_a}(E_a/K)$\end{small} attain their respective upper  and lower bounds as given in Theorem \ref{type1bounds}.  Also, we  have  examples in Table \ref{tab:type1examples} where our lower bound  \begin{small}$h^3_{S_a(L)}$\end{small}  for   \begin{small}$\dim_{\F_3}{\rm Sel}^3(E_a/K)$\end{small} satisfies \begin{small}$h^3_{S_a(L)} > \text{ rk }_{\Z} E_a(K) $\end{small}. Thus we get examples of non-trivial \begin{small}$\Sh(E_a/K)[3]$\end{small} (see Remark \ref{rmkforsha}). 

These bounds on Selmer groups have applications to the rational cube sum problem  (see \cite{syl}, \cite{sel}), which asks the question: which  integers \begin{small}$D$\end{small} can be expressed as a sum of cubes of two rational numbers? The elliptic curve \begin{small}$X^3+Y^3=DZ^3$\end{small} can be written in the   Weierstrass form as \begin{small}$E_{-432D^2}:y^2=x^3-432D^2$\end{small} and for a cube-free integer \begin{small}$D>2$,\end{small} it is well-known that \begin{small}${E_{-432D^2}(\Q)}_{\text{torsion}}=\{O\}$.\end{small}  Thus, a cube-free integer \begin{small}$D > 2$\end{small} satisfies \begin{small}$x^3+y^3=D$\end{small} with \begin{small}$x, y \in \Q \Leftrightarrow  \text{rk }_{\Z} E_{-432D^2}(\Q) > 0$.\end{small}  

Let \begin{small}$\ell$\end{small} be a rational prime. The special cases of cube sum problem  for \begin{small}$\ell$, $\ell^2$\end{small} are typically called the `Sylvester's conjecture' (cf. \cite{dv}). Other than  classical results  by Sylvester, Selmer, Satg{\'e} \cite{sa}, Lieman \cite{lie} etc., there are recent works including \cite{rz,dv,cst,sy,yi,kl,ms,jms,abs}. In some of these works, they consider cube sum problem for one of the pairs \begin{small}$\ell, \ell^2 $\end{small} or \begin{small}$2\ell, 2\ell^2$\end{small} or \begin{small}$3\ell, 3\ell^2$\end{small} with the prime \begin{small}$\ell $\end{small} chosen in  some specific congruence class modulo $9$. 
Note that the cube sum problem for \begin{small}$2\ell$\end{small} or \begin{small}$2\ell^2$\end{small} with prime \begin{small}$\ell \equiv 1 \pmod 3$\end{small}  is not considered in these references. 

In this article (\S \ref{cubesumsubsection}), we consider the cube sum problem  for \begin{small}$2\ell,  2\ell^2$\end{small} for all primes \begin{small}$\ell \geq 5$.\end{small} Our method and proofs are  different than the above-mentioned works. We first compute  \begin{small}$\dim_{\F_3}{\rm Sel}^{\phi_a}(E_{-432D^2}/K)$\end{small} (Theorems \ref{sylvester2}-\ref{sylvester2psqr2}). To do this, we compute the bound for \begin{small}$\dim_{\F_3}{\rm Sel}^{\phi_a}(E_{-432D^2}/K)$\end{small} using  results in \S\ref{type1curves} and then we  obtain some potential candidates as generators of the Selmer group, which we verify by hand with explicit computation. We then go on to exploit the relation between the \begin{small}$\phi_a$\end{small}-Selmer group, the $3$-Selmer group,  \begin{small}$E_{-432D^2}(K)$\end{small} and \begin{small}$\Sh(E_{-432D^2}/K)$\end{small} and their counterparts over \begin{small}$\Q$\end{small} to compute \begin{small}$\text{rk } E_{-432D^2}(\Q)$\end{small} in Corollaries \ref{cortosylvester23}-\ref{cortosylvester2psqr}. We have the following new unconditional result on cube sum problem:  
\begin{theorem}\label{sylvesterintro}
 Let \begin{small}$\ell$\end{small} be a prime such that $2$ is not a cube in \begin{small}$\F_\ell$.\end{small}
\begin{enumerate}
	\item  If \begin{small}$\ell \equiv 1,7 \pmod 9$,\end{small} then \begin{small}$2\ell$\end{small} is not a rational cube sum. 
 \item If \begin{small}$\ell \equiv 1,4 \pmod 9$,\end{small} then \begin{small}$2\ell^2$\end{small} is not a rational cube sum.
\end{enumerate}
 \noindent In both the cases, \begin{small}$\Sh(E_{-432D^2}/\Q)[3]=0$\end{small} for \begin{small}$D \in \{2\ell, \ 2\ell^2\}$.\end{small}  \qed
\end{theorem}
\noindent We further have some conditional results for \begin{small}$D=2\ell$\end{small} and \begin{small}$2\ell^2$\end{small} in Corollaries \ref{cortosylvester23} and \ref{cortosylvester2psqr}. 

We discuss further applications of our bounds on Selmer groups in this article.
In \S\ref{positiveprop}, we consider explicit families of Mordell curves \begin{small}$E_a$\end{small}, defined using certain congruence relations, and show that a positive proportion of these curves has $3$-Selmer rank over \begin{small}$\Q$\end{small} equal to $0$ and $1$, respectively.
In \S\ref{cubictwistsec}, for a suitable  \begin{small}$a$\end{small}, we study the family  \begin{small}$E_{a\ell^2}$\end{small} of cubic twists of \begin{small}$E_a$\end{small}, as the primes \begin{small}$\ell$\end{small} vary and show that for a positive density of primes \begin{small}$\ell$,\end{small}  \begin{small}$\dim_{\F_3} \text{Sel}^{\phi_a}(E_{a\ell^2}/K)$\end{small} is bounded.

The  article is structured as follows: In \S \ref{iso}, we discuss the basic setup of $3$-isogeny and give an explicit description of Selmer groups in Prop. \ref{cohom}. In \S \ref{ant}, we collect  purely algebraic results over number fields that are needed later. 
For any finite set \begin{small}$S$\end{small} of finite primes of \begin{small}$K$,\end{small}  we define two \begin{small}$\F_3$\end{small}-vector spaces  \begin{small}$M(S,a), N(S,a)$\end{small} which will be subsequently used to give the lower and upper bound respectively on \begin{small}$\dim_{\F_3} {\rm Sel}^{\phi_a}(E_{a}/K)$\end{small}. 
The local theory for the curves \begin{small}$E_a$\end{small} is contained in \S\ref{type1theory}. At first, we discuss the image of the Kummer map at primes outside $3$ and then at the prime dividing $3$. Using these and the results from \S \ref{ant}, we obtain the bounds for the Selmer group of \begin{small}$E_a$\end{small} in \S \ref{type1global} and further sharpen them in \S \ref{type1boundsec}. We discuss applications of our results including the cube sum problem  in \S \ref{applications}.  Table \ref{tab:type1examples} contains numerical examples of bounds on Selmer groups.

\noindent {\bf Acknowledgements:}\small{  We thank Ashay Burungale for discussions. The numerical examples are computed using SageMath 9.4 and Magma.}
 
\section{The Basic Set-up}\label{iso}
Let \begin{small}$E$\end{small} be an elliptic curve over \begin{small}$\Q$\end{small} and we may assume \begin{small}$E:y^2=f(x)$,\end{small} where \begin{small}$f(x) \in \Q[X]$\end{small} is monic of degree $3$. Let \begin{small}$C$\end{small} be a subgroup of order $3$ in \begin{small}$E(\QQ)$\end{small} stable under the action of \begin{small}$G_\Q:=\Gal(\QQ/\Q)$.\end{small} Then \begin{small}$C=\{O,(\alpha, \beta),(\alpha, -\beta)\}$\end{small} with \begin{small}$\alpha \in \Q$\end{small} and \begin{small}$\beta^2 \in \Q$\end{small}  \cite[pg.~3]{top}.
By a change of co-ordinate (if necessary), we may assume that \begin{small}$\alpha =0$.\end{small} If \begin{small}$E$\end{small} is now represented by \begin{small}$y^2=x^3+rx^2+sx+t$,\end{small} then \begin{small}$C=\{O,(0, \sqrt{t}),(0, -\sqrt{t})\}$\end{small} with \begin{small}$s^2=4rt$.\end{small}
If \begin{small}$s=0$,\end{small} we see that \begin{small}$r=0$,\end{small} hence \begin{small}$E$\end{small} is of the form \begin{small}$y^2=x^3+t$,\end{small}  with \begin{small}$t \in \Z$.\end{small} On the other hand, if \begin{small}$s \neq 0$,\end{small} then by change of variables, if necessary, the equation of \begin{small}$E$\end{small} takes the form \begin{small}$y^2=x^3+a(x-b)^2$,\end{small} with \begin{small}$a, b \in \Z$\end{small} and \begin{small}$ab(4a+27b) \neq 0$.\end{small} In this paper, we will study the curves \begin{small}$y^2=x^3+t$\end{small} with \begin{small}$t \in \Z$.\end{small}

 It is well-known that \begin{small}$E_a: y^2=x^3+a$\end{small} has CM by \begin{small}$\Z[\sqrt{-3}]$\end{small} and \begin{small}$j(E_a)=0$.\end{small} 
 Using \cite{velu}, we get that \begin{small}$E_a/C$\end{small} is again a curve of the same form given by \begin{small}$E_{-27a}:y^2=x^3-27a$.\end{small} Thus, we obtain a rational $3$-isogeny \begin{small}$\rho_a: E_a \to E_{-27a}$\end{small} given by \begin{small}$\rho_a(x,y) = \Big( \frac{x^3+4a}{x^2}, \frac{y(x^3-8a)}{x^3} \Big)$\end{small} and the corresponding dual isogeny \begin{small}$\widehat{\rho}_a: E_{-27a} \to E_a$\end{small} is given by \begin{small}$\widehat{\rho}_a(x,y) = \Big( \frac{x^3-108a}{9x^2}, \frac{y(x^3+216a)}{27x^3} \Big).$\end{small} Set \begin{small}$K:=\Q(\zeta_3)$\end{small} and take \begin{small}$F \in \{ K, \Q\}$.\end{small} For any  \begin{small}$c \in F^*$,\end{small} note that there is an isomorphism \begin{small}$\theta_c: E_{c^6a} \isomto E_a$\end{small} given by \begin{small}$(x,y) \mapsto (c^{-2}x,c^{-3}y)$\end{small} over \begin{small}$F$.\end{small}  Now if \begin{small}$27 \mid a$,\end{small} then the Weierstrass equation for \begin{small}${E}_{-27a}$\end{small} is not minimal. Following this, we will use the rational $3$-isogeny \begin{small}$\phi_a: E_a \to \widehat{E}_a$,\end{small} where  
 \begin{small}$$\quad \phi_a = \rho_a \ \text{  and } \ \widehat{E}_a:y^2=x^3-27a, \ \text{  if } 27 \nmid a $$ $$\text{ whereas } \phi_a = \theta_{3} \circ \rho_a \ \text{  and } \ \widehat{E}_a:y^2=x^3-\frac{a}{27}, \ \text{  if } 27 \mid a.$$\end{small} 
To ease the notation, in either cases, we will always write \begin{small}$\widehat{E}_a:y^2=x^3+a\alpha^2$,\end{small} where \begin{small}$\alpha^2=-27$,\end{small} if \begin{small}$27 \nmid a$\end{small} and \begin{small}$\alpha^2=-\frac{1}{27}$,\end{small} if \begin{small}$27 \mid a$.\end{small} Observe that \begin{small}$E_a \stackrel{\theta_\p}{\cong} \widehat{E}_a$\end{small} over \begin{small}$K$\end{small}, where \begin{small}$\p=1-\zeta_3$.\end{small}  
Thus, we get a $3$-isogeny \begin{small}$\theta_\p \circ \phi_a: E_a \to E_a$\end{small} over \begin{small}$K$,\end{small} which we simply denote by \begin{small}$\phi$\end{small} to ease the notation. Explicitly, 
\begin{small}\begin{equation}\label{eq:defofphi}
\phi(x,y) = \Big( \frac{x^3+4a}{\p^2x^2}, \frac{y(x^3-8a)}{\p^3x^3} \Big).
\end{equation}\end{small}
In \S\ref{type1global} and \S\ref{type1boundsec}, we study the Selmer groups \begin{small}${\rm Sel}^\phi(E_a/K)$\end{small} and \begin{small}${\rm Sel}^{\phi_a}(E_a/\Q)$.\end{small}

\noindent {\bf{Notation}: } For any finite set \begin{small}$X$,  $|X|$\end{small} denotes its cardinality.
\begin{small}\begin{itemize}
\item  Given a multiplicative group $G$, define the subgroup $G^3:=\{x^3\mid x \in G\}$.
\item  For abelian groups $A$, $A'$ and a homomorphism $\eta: A \rightarrow A'$, set $A[\eta]:=  \{x \in A \mid \eta(x)=0\}$. 
Further, for a given prime $p$, put  $A[p^\infty]: =\underset{r \geq 1}{\cup}A[p^r]$. 
\item  Given a field  $F$  of characteristic $0$ and an  $F$-rational isogeny $E {\overset{\varphi}{\longrightarrow}}  \widehat{E}$  between two elliptic curves  $E$, $\widehat{E}$ over  $F$,  we denote $E(\bar{F})[\varphi]$  simply by $E[\varphi]$.
\item  $F$ denotes an arbitrary number field and let $\Sigma_F$ be the set of all its finite places. For a finite subset $S$ of $\Sigma_F$, $\OO_S$ denotes the ring of $S$-integers in $F$. For $\varpi \in \Sigma_F$, $F_\varpi$ will denote the completion of $F$ at $\varpi$ and its ring of integers will be denoted by $\OO_{F_\varpi}$.
\item  $\zeta=\zeta_3=\frac{-1+\sqrt{-3}}{2}$ is the primitive third root of unity, $K=\Q(\zeta)$ and $\OO_K=\Z[\zeta]$ the ring of integers of $K$.  We will denote a finite prime of $\OO_K$ by $\q$.
\item $(\p)=(1-\zeta)$ is the unique prime of $K$ above $3$ with uniformizer $\p=1-\zeta$ in $\OO_{K_\p}$. By abuse of notation, we simply write $\p$ for the ideal $(\p)$ of $\OO_K$.
\item We set $L=L_a=\frac{K[X]}{(X^2-a)}$. Thus $L/K$ is a quadratic field extension  if $ a \notin K^{*2}$. Otherwise, $L \cong K \times K$ if $ a \in K^{*2}.$
\item Define $A := \OO_L $, the ring of integers  of $L$   if $a \notin K^{*2}.$ If $ a \in K^{*2}$, then  set $A:= \OO_K \times \OO_K$. 
\item Next, we define the quadratic \'etale $K_\q$-algebras: $L_\q=\frac{K_\q[X]}{(X^2-a)}$. 
Thus  $L_\q/K_\q$ is a quadratic field extension, if $ a \notin K_\q^{*2}$; otherwise, $L_\q \cong K_\q \times K_\q, \text{ if } a \in K_\q^{*2}$. 
\item Define $A_\q:=\begin{cases} \text{the ring of integers $\OO_{L_\q}$ of the field } L_\q, & \text{ if } a \notin K_\q^{*2},\\  \OO_{K_\q} \times \OO_{K_\q}, & \text{ if } a \in K_\q^{*2}. \end{cases}$ 
\item $N_{L/K}:L \to K$ denotes the field norm if $L$ is field and the multiplication of co-ordinates if $L \cong K \times K$. A similar notation is followed for $L_\q$ and $K_\q$.
\end{itemize}\end{small} 
   
Let \begin{small}$E \underset{\widehat{\varphi}}{\overset{\varphi}{\rightleftarrows}}  \widehat{E}$\end{small} be rational $3$-isogenies of elliptic curves \begin{small}$E/\Q$\end{small} and \begin{small}$\widehat{E}/\Q$\end{small}. 
Write \begin{small}$E: y^2=x^3+a$\end{small} and \begin{small}$\widehat{E}: y^2=x^3+a'$.\end{small} Now, let \begin{small}$F$\end{small} be any field of characteristic $0$. To \begin{small}$\varphi$\end{small} and \begin{small}$\widehat{\varphi}$\end{small} we associate two mirror quadratic  {\'e}tale \begin{small}$F$\end{small}-algebras, 
  \begin{small}$ \widetilde{F}_{\widehat{\varphi}}:= \frac{F[X]}{(X^2-a')}, \ \widetilde{F}_{\varphi}:= \frac{F[X]}{(X^2-a)}.$\end{small} If \begin{small}$F$\end{small} contains \begin{small}$\zeta$,\end{small} then  \begin{small}$a/a'=-27 \beta^2$\end{small} for some \begin{small}$\beta \in F^*$\end{small} and hence, \begin{small}$\widetilde{F}:=\widetilde{F}_{\varphi} \cong \widetilde{F}_{\widehat{\varphi}}$.\end{small}
\begin{proposition}\label{cohom}
There is an isomorphism of group schemes 
\begin{small}$E[\varphi] \cong \Ker( \mathrm{Res}_F^{\widetilde{F}_{\widehat{\varphi}}} \mu_3 \to \mu_3)$\end{small}
and an induced isomorphism
\begin{small}$$H^1(G_F, E[\varphi]) \cong (\widetilde{F}_{\widehat{\varphi}}^*/\widetilde{F}_{\widehat{\varphi}}^{*3})_{N=1},$$\end{small}
where \begin{small}$(\widetilde{F}^*_{\widehat{\varphi}}/\widetilde{F}^{*3}_{\widehat{\varphi}})_{N=1}$\end{small} denotes the kernel of the norm map \begin{small}$\overline{N}_{\widetilde{F}_{\widehat{\varphi}}/F}:\widetilde{F}^*_{\widehat{\varphi}}/\widetilde{F}^{*3}_{\widehat{\varphi}} \to F^*/F^{*3}$.\end{small} 
The same statement holds if we replace \begin{small}$(E,\varphi)$\end{small} by \begin{small}$(\widehat{E},\widehat{\varphi})$.\end{small}
\end{proposition}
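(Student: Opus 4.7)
The plan is to separate the proposition into two steps. First, I will establish the group-scheme isomorphism $E[\varphi]\cong\Ker(\mathrm{Res}^{\widetilde F_{\widehat\varphi}}_F\mu_3\to\mu_3)$ by identifying both sides as the same explicit Galois twist of $\mu_3$. Second, I will derive the cohomology isomorphism from the resulting short exact sequence by a combination of Shapiro's lemma and Kummer theory. By the symmetry between $(E,\varphi)$ and $(\widehat E,\widehat\varphi)$, the last sentence of the proposition is automatic once the first is proved.

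For the group-scheme claim, the setup in Section \ref{iso} gives $\widehat E[\widehat\varphi]=\{O,(0,\pm\sqrt{c'})\}$, so as a $G_F$-module $\widehat E[\widehat\varphi]\cong\Z/3$ with Galois action through the quadratic character $\widehat\chi:G_F\to\{\pm 1\}$ cutting out the étale algebra $\widetilde F_{\widehat\varphi}=F[X]/(X^2-c')$ (the trivial character in the split case $c'\in F^{*2}$). The Weil pairing $e_\varphi:E[\varphi]\times\widehat E[\widehat\varphi]\to\mu_3$ is a perfect Galois-equivariant pairing, hence $E[\varphi]\cong\Hom(\widehat E[\widehat\varphi],\mu_3)\cong\mu_3\otimes\widehat\chi$ as $G_F$-modules. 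On the other side, unpacking $\Ker(\mathrm{Res}^{\widetilde F_{\widehat\varphi}}_F\mu_3\xrightarrow{N}\mu_3)$ on $\overline F$-points gives $\{(\zeta,\zeta^{-1}):\zeta\in\mu_3(\overline F)\}$, with $G_F$ acting by the cyclotomic character on each coordinate while permuting the two factors according to $\widehat\chi$; viewed through the first coordinate this is exactly $\mu_3\otimes\widehat\chi$, matching $E[\varphi]$.

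For the cohomology isomorphism, I apply $G_F$-cohomology to the short exact sequence
\begin{equation*}
0\longrightarrow E[\varphi]\longrightarrow\mathrm{Res}^{\widetilde F_{\widehat\varphi}}_F\mu_3\xrightarrow{N}\mu_3\longrightarrow 0.
\end{equation*}
One first observes that the induced map on $F$-points $N:\mu_3(\widetilde F_{\widehat\varphi})\to\mu_3(F)$ is surjective: this is trivial when $\mu_3\not\subset F$, and in the opposite case $N$ sends $\zeta\in\mu_3$ to $\zeta^{-1}$ (field case) or $(\zeta,1)\mapsto\zeta$ (split case). Consequently the connecting map into $H^1(G_F,E[\varphi])$ vanishes and we obtain
\begin{equation*}
H^1(G_F,E[\varphi])\cong\Ker\!\bigl(H^1(G_F,\mathrm{Res}^{\widetilde F_{\widehat\varphi}}_F\mu_3)\xrightarrow{N_*}H^1(G_F,\mu_3)\bigr).
\end{equation*}
Shapiro's lemma identifies $H^1(G_F,\mathrm{Res}^{\widetilde F_{\widehat\varphi}}_F\mu_3)$ with $H^1(G_{\widetilde F_{\widehat\varphi}},\mu_3)$, Kummer theory then identifies this with $\widetilde F_{\widehat\varphi}^*/\widetilde F_{\widehat\varphi}^{*3}$, and similarly $H^1(G_F,\mu_3)\cong F^*/F^{*3}$. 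By naturality of Shapiro's isomorphism the arrow $N_*$ corresponds to the field-norm-induced map $\overline N_{\widetilde F_{\widehat\varphi}/F}$, so the image is exactly $(\widetilde F_{\widehat\varphi}^*/\widetilde F_{\widehat\varphi}^{*3})_{N=1}$, as required.

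The main obstacle is the bookkeeping of Galois twists: one has to confirm that the Weil-pairing identification of $E[\varphi]$ as a twist of $\mu_3$ gives the same character as the étale-algebra description via $\mathrm{Res}^{\widetilde F_{\widehat\varphi}}_F$ (rather than its product with the cyclotomic character or some inverse), and that the composition of Shapiro's and Kummer's isomorphisms carries the scheme-theoretic norm on $\mathrm{Res}^{\widetilde F_{\widehat\varphi}}_F\mu_3$ precisely to the field norm on $\widetilde F_{\widehat\varphi}^*/\widetilde F_{\widehat\varphi}^{*3}$. The split case $\widetilde F_{\widehat\varphi}\cong F\times F$ is formally covered by the same argument once the Weil restriction is interpreted as $\mu_3\times\mu_3$ with the componentwise product map to $\mu_3$.
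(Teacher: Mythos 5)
Your proof is correct and is essentially the argument the paper intends: the paper simply defers to \cite[Prop.~24]{bes}, whose proof proceeds exactly as you do, identifying $E[\varphi]$ with the norm-one twist of $\mu_3$ attached to $\widetilde{F}_{\widehat{\varphi}}$ (via the Weil/Cartier duality with $\widehat{E}[\widehat{\varphi}]=\{O,(0,\pm\sqrt{c'})\}$) and then taking cohomology of $0\to E[\varphi]\to \mathrm{Res}_F^{\widetilde{F}_{\widehat{\varphi}}}\mu_3\to\mu_3\to 0$ with surjectivity on $H^0$, Shapiro's lemma and Kummer theory. Your twist bookkeeping (including the split case) is consistent with the paper's setup, so no gap remains.
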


\begin{proof}
The proof of \cite[Prop.~24]{bes} extends easily and the details are omitted.
\end{proof}
\vspace{2mm}

Let \begin{small}$E$, $\widehat{E}$\end{small} be elliptic curves over \begin{small}$F$\end{small} and \begin{small}$\varphi: E \to \widehat{E}$\end{small} be an isogeny over \begin{small}$F$.\end{small} For \begin{small}$T \in \{F, F_\omega\}$,\end{small} let \begin{small}$\delta_{\varphi, T}:\widehat{E}(T) \longrightarrow \widehat{E}(T)/\varphi(E(T)) \lhook\joinrel\xrightarrow{\overline{\delta}_{\varphi, T}} H^1(G_T, E[\varphi])$\end{small} be the Kummer map. Then we have the following commutative diagram:
\begin{small}\begin{center}\begin{tikzcd}
 0 \arrow[r] & \widehat{E}(F)/\varphi(E(F)) \arrow[r, "\overline{\delta}_{\varphi, F}"] \arrow[d]
& H^1(G_F, E[\varphi]) \arrow[d, "\underset{\omega \in \Sigma_F}{\prod} {\rm res}_\omega"] \arrow[r] & H^1(G_F,E)[\varphi] \arrow[d] \arrow[r] & 0\\
 0 \arrow[r] & \underset{\omega \in \Sigma_F}{\prod} \widehat{E}(F_\omega)/\varphi(E(F_\omega)) \arrow[r, "\underset{\omega \in \Sigma_F}{\prod} \overline{\delta}_{\varphi,F_\omega}"] & \underset{\omega \in \Sigma_F}{\prod} H^1(G_{F_\omega},E[\varphi]) \arrow[r] & \underset{\omega \in \Sigma_F}{\prod} H^1(G_{F_\omega},E)[\varphi] \arrow[r] & 0.
\end{tikzcd}\end{center}\end{small}

\begin{defn}\label{mainsel}
The \begin{small}$\varphi$\end{small}-Selmer group of \begin{small}$E$\end{small} over \begin{small}$F$,\end{small} \begin{small}${\rm Sel}^\varphi(E/F)$\end{small} is defined as
\begin{small}$${\rm Sel}^\varphi(E/F)= \{c \in H^1(G_F,E[\varphi]) \mid {\rm res}_\omega(c) \in {\rm Im } \ \delta_{\varphi,F_\omega} \text{ for every } \omega \in \Sigma_F \}.$$\end{small}
\end{defn}

Setting \begin{small}$\Sh(E/F):=\text{Ker}\big( H^1(G_F,E) \to \underset{\omega \in \Sigma_F}{\prod} H^1(G_{F_\omega},E) \big)$\end{small}, the Tate-Shafarevich group of \begin{small}$E$\end{small} over \begin{small}$K$,\end{small} we get the fundamental exact sequence:
\begin{small}
\begin{equation}\label{eq:defofsha}
0 \longrightarrow {\widehat{E}(F)}/{\varphi(E(F))} \longrightarrow {\rm Sel}^\varphi(E/F) \longrightarrow \Sh(E/F)[\varphi] \longrightarrow 0
\end{equation}
\end{small}
In particular, for \begin{small}$\varphi=[n]: E(\overline{F}) \to E(\overline{F})$,\end{small} we have the \begin{small}$n$\end{small}-Selmer group \begin{small}${\rm Sel}^n(E/F)$.\end{small} \\

In view of Prop. \ref{cohom}, by slight abuse of notation, we continue to denote the composite map \begin{small}$\widehat{E}(K) \longrightarrow \widehat{E}(K)/\varphi(E(K))  \lhook\joinrel\xrightarrow{  } H^1(G_K, E[\varphi]) \isomto (L^*/L^{*3})_{N=1}$\end{small} again by \begin{small}$\delta_{\varphi,K}$;\end{small} similarly  we view \begin{small}$\delta_{\varphi,K_\q}(\widehat{E}(K_\q)) \subset (L_\q^*/L_\q^{*3}\big)_{N=1}$.\end{small} 
We have the commutative diagram,
\begin{small}\begin{equation}\label{eq:localglobal}
	\begin{tikzcd}
	  {{L^*}/{L^{*3}}} \arrow[r, "\overline{N}_{L/K}"] \arrow[d] & {{K^*}/{K^{*3}}} \arrow[d]\\
		{{L_\q^*}/{L_\q^{*3}}} \arrow[r, "\overline{N}_{L_\q/K_\q}"] & {{K_\q^*}/{K_\q^{*3}.}}
	\end{tikzcd}
 \end{equation}\end{small}
  To ease the notation, via the canonical embedding \begin{small}$\iota_\q:K \to K_\q$,\end{small} we identify \begin{small}$\iota_\q(x)$\end{small} with \begin{small}$x \in K$;\end{small} similarly for \begin{small}$L$.\end{small} Hence from \eqref{eq:localglobal}, if \begin{small}$\overline{x} \in \big(L^*/L^{*3}\big)_{N=1}$,\end{small} then \begin{small}$\overline{x} \in \big(L_\q^*/L_\q^{*3}\big)_{N=1}$.\end{small}

We now have the following alternative description of \begin{small}${\rm Sel}^\varphi(E/K)$:\end{small}
\begin{small}
\begin{equation}\label{eq:newseldef}
{\rm Sel}^\varphi(E/K)=\{ \overline{x} \in (L^*/L^{*3})_{N=1} \mid \overline{x} \in \text{Im } \delta_{\varphi,K_\q} \text{ for all } \q \in \Sigma_K \}.
\end{equation}
\end{small}
Similarly, if \begin{small}$a \in K^{*2}$,\end{small} then the definition in \eqref{eq:newseldef} can be written more explicitly:
\begin{small}
\begin{equation}\label{eq:newseldefsq}
{\rm Sel}^\varphi(E/K)=\{ (\overline{x}_1,\overline{x}_2) \in (K^*/K^{*3} \times K^*/K^{*3})_{N=1} \mid (\overline{x}_1,\overline{x}_2) \in \text{Im } \delta_{\varphi,K_\q} \text{ for all } \q \in \Sigma_K \}.
\end{equation}
\end{small}
These will be our working definitions of the Selmer groups for the rest of the article.

\section{Some algebraic number theory}\label{ant}
\noindent We obtain some  algebraic results in this section which will be used later.
\begin{defn}\label{defnofAq}
We define \begin{small}$(A^*/A^{*3})_{N=1}$\end{small} to be  the kernel of the norm map \begin{small}$\overline{N}_{L/K}: A^*/A^{*3} \to \OO_K^*/\OO_K^{*3}$\end{small} induced from the norm map \begin{small}$\overline{N}_{L/K}: L^*/L^{*3} \to K^*/K^{*3}$.\end{small} The group 
\begin{small}$(A_\q^*/A_\q^{*3})_{N=1}$\end{small} is defined similarly by replacing \begin{small}$A$\end{small} and \begin{small}$\OO_K$\end{small} with \begin{small}$A_\q$\end{small} and \begin{small}$\OO_{K_\q}$,\end{small} respectively, in this definition. 
\end{defn}
In the following proposition we compute the size of \begin{small}$(A_\q^*/A_\q^{*3})_{N=1}$.\end{small}
\begin{proposition}\label{N1subgrp}
For \begin{small}$\q \nmid 3$,\end{small} we have
\begin{small}$|(A_\q^*/A_\q^{*3})_{N=1}| =       \begin{cases}
1, & \text{ if } a \notin K_\q^{*2}, \\
3, & \text{ if } a \in K_\q^{*2}.   \end{cases}$\\\end{small}
On the other hand, for \begin{small}$\p \mid 3$,\end{small} we have 
\begin{small}$|(A_\p^*/A_\p^{*3})_{N=1}| =       \begin{cases}
9, & \text{ if } a \notin K_\p^{*2}, \\
27, & \text{ if } a \in K_\p^{*2}.  \end{cases}$\end{small}
\end{proposition}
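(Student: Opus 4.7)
The plan is to compute $|\OO_{K_\q}^*/\OO_{K_\q}^{*3}|$ and $|A_\q^*/A_\q^{*3}|$ via the structure theorem for local units, show that the induced norm $\overline{N}:A_\q^*/A_\q^{*3}\to\OO_{K_\q}^*/\OO_{K_\q}^{*3}$ is surjective, and then read off $|(A_\q^*/A_\q^{*3})_{N=1}|$ as the ratio of the two orders. Since $K$ contains $\zeta_3$, every $K_\q$ (and every $L_\mathfrak{Q}$) contains $\mu_3$; so for any such local field $F$ of residue characteristic $p$, the decomposition $\OO_F^*\cong\mu(F)\times\Z_p^{[F:\Q_p]}$ gives $\dim_{\F_3}\OO_F^*/\OO_F^{*3}=1$ if $p\neq 3$ and $=1+[F:\Q_p]$ if $p=3$ (the $3$-primary part of $\mu(F)$ always contributes exactly one copy of $\Z/3$ modulo cubes, irrespective of the exponent).

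For $\q\nmid 3$, this gives $\OO_{K_\q}^*/\OO_{K_\q}^{*3}\cong\mu_3$. If $a\in K_\q^{*2}$, then $A_\q\cong\OO_{K_\q}\times\OO_{K_\q}$, $A_\q^*/A_\q^{*3}\cong\mu_3\times\mu_3$, and $\overline{N}$ is the multiplication map—surjective with kernel of size $3$. If $a\notin K_\q^{*2}$, then $A_\q=\OO_{L_\mathfrak{Q}}$ and $A_\q^*/A_\q^{*3}\cong\mu_3$; a generator of the $3$-primary part of $\mu(L_\mathfrak{Q})$ in fact lies in $K_\q$ (for $\q\nmid 3$, any non-trivial cyclotomic $3$-power extension of $K_\q$ has degree a power of $3$, incompatible with the quadratic extension $L_\mathfrak{Q}/K_\q$), so its norm to $K_\q$ is its square, an automorphism of $\mu_3$; hence $\overline{N}$ is an isomorphism with trivial kernel.

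For $\p\mid 3$, we have $[K_\p:\Q_3]=2$ and $\mu_9\not\subset K_\p$ (since $[\Q_3(\zeta_9):\Q_3]=6$), so $\dim_{\F_3}\OO_{K_\p}^*/\OO_{K_\p}^{*3}=3$. The split case $a\in K_\p^{*2}$ parallels the split case above with $\mu_3$ replaced by $(\Z/3)^3$, yielding a kernel of size $27$. In the non-split case $[L_\mathfrak{P}:\Q_3]=4$, and $\mu_9\not\subset L_\mathfrak{P}$ (because $[K_\p(\zeta_9):K_\p]=3$ does not divide $[L_\mathfrak{P}:K_\p]=2$), so $\dim_{\F_3}\OO_{L_\mathfrak{P}}^*/\OO_{L_\mathfrak{P}}^{*3}=5$. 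To deduce a kernel of size $9$, I would establish surjectivity of $\overline{N}$ via local class field theory: $[K_\p^*:N_{L_\mathfrak{P}/K_\p}(L_\mathfrak{P}^*)]=2$ is coprime to $3$, so the full norm $L_\mathfrak{P}^*/L_\mathfrak{P}^{*3}\to K_\p^*/K_\p^{*3}$ is surjective; splitting off the $\Z/3$ summand coming from the valuation (where the norm acts by multiplication by the residue degree $f\in\{1,2\}$, which is invertible mod $3$), surjectivity descends to the unit quotient and gives a kernel of $\F_3$-dimension $5-3=2$. The main obstacle is precisely this last surjectivity step at $\p$ in the wildly ramified non-split case; local class field theory seems the cleanest tool for handling it.
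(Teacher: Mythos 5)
Your proposal is correct, and its skeleton is the same as the paper's: compute $|\OO_F^*/\OO_F^{*3}|$ for $F\in\{K_\q,L_{\mathfrak{Q}}\}$ from the structure of local units, prove the induced norm $A_\q^*/A_\q^{*3}\to\OO_{K_\q}^*/\OO_{K_\q}^{*3}$ is surjective, and read off the kernel as the ratio of orders; your dimension counts ($1$ away from $3$; $3$ for $K_\p$ and $5$ for $L_{\mathfrak{P}}$, using $\mu_9\not\subset L_{\mathfrak{P}}$) agree with the paper's $3,\,3^3,\,3^5$. Where you genuinely diverge is the surjectivity step. The paper disposes of all non-split cases (including $\p\mid 3$) with one observation: for a quadratic extension of local fields the cokernel of $N_{L_{\mathfrak{Q}}/K_\q}$ on \emph{units} is $2$-primary (it injects into $K_\q^*/N(L_{\mathfrak{Q}}^*)\cong\Z/2$, since an element of $\OO_{K_\q}^*$ that is a norm is automatically the norm of a unit), so the map mod cubes is onto. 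You instead give two separate arguments: for $\q\nmid 3$ a roots-of-unity descent (the $3$-power roots of unity of $L_{\mathfrak{Q}}$ already lie in $K_\q$ because cyclotomic $3$-power extensions of $K_\q$ are unramified of $3$-power degree, so the norm acts as squaring on $\mu_3$), which in fact shows the induced map is an isomorphism, slightly more than needed; and at $\p$ the full-group index $[K_\p^*:N(L_{\mathfrak{P}}^*)]=2$ from local class field theory followed by a valuation correction ($f$ invertible mod $3$ forces $3\mid v_{\mathfrak{P}}(y)$, and after stripping $\pi_{\mathfrak{P}}^{3m}$ one lands in $N(\OO_{L_{\mathfrak{P}}}^*)\cdot\OO_{K_\p}^{*3}$). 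Your phrase about ``splitting off the $\Z/3$ summand coming from the valuation'' should be read as exactly this valuation-correction argument (the norm need not respect a chosen splitting), but so interpreted the step is sound; the paper's uniform unit-cokernel fact just gets there faster.
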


\begin{proof}
First, assume that \begin{small}$a \notin  K_\q^{*2}$\end{small} i.e. \begin{small}$\OO_{L_\q}$\end{small} is the ring of integers of the field \begin{small}$L_\q$.\end{small} In this case, cokernel of \begin{small}$N_{L_\q/K_\q}: \OO_{L_\mathfrak{q}}^* \to \OO_{K_\q}^*$\end{small} is a $2$-primary group, 
hence \begin{small}$\overline{N}_{L_\mathfrak{q}/K_\q}: \OO_{L_\mathfrak{q}}^*/\OO_{L_\mathfrak{q}}^{*3} \to \OO_{K_\q}^*/\OO_{K_\q}^{*3}$\end{small} is surjective. Thus, we get 
\begin{small}$|(A_\q^*/A_\q^{*3})_{N=1}|= \frac{|\OO_{L_\mathfrak{q}}^*/\OO_{L_\mathfrak{q}}^{*3}|}{| \OO_{K_\q}^*/\OO_{K_\q}^{*3}|}.$\end{small}

On the other hand, if \begin{small}$a \in K_\q^{*2}$,\end{small} then \begin{small}$A_\q = \OO_{K_\q} \times \OO_{K_\q}$\end{small} (see \S\ref{iso}) and the norm map \begin{small}$N_{L_\q/K_\q}: \OO_{K_\q}^* \times \OO_{K_\q}^* \to \OO_{K_\q}^*$\end{small} is given by the multiplication of co-ordinates, hence surjective. Therefore,
\begin{small}$|(A_\q^*/A_\q^{*3})_{N=1}|= \frac{|\OO_{K_\q}^*/\OO_{K_\q}^{*3} \times \OO_{K_\q}^*/\OO_{K_\q}^{*3}|}{| \OO_{K_\q}^*/\OO_{K_\q}^{*3}|}= |\OO_{K_\q}^*/\OO_{K_\q}^{*3}|.$\end{small}

Thus, to complete the proof, we compute \begin{small}$|\OO_F^*/\OO_F^{*3}|$,\end{small} with \begin{small}$F \in \{K_\q,L_\mathfrak{q}\}$\end{small} a field. Let \begin{small}$q$\end{small} and \begin{small}$|\kappa_F|$\end{small} denote the characteristic and cardinality of the residue field of \begin{small}$F$,\end{small} respectively. Then
\begin{small}$\OO_F^* \cong \frac{\Z}{(|\kappa_F|-1)\Z} \times \frac{\Z}{q^{s}\Z} \times \Z_q^{r},$\end{small}
where \begin{small}$r= [F: \Q_q]$\end{small} and \begin{small}$s=\mathrm{max}\{ t \mid F \text{ contains } q^{t}-\text{th roots of unity} \}$.\end{small}

Consider the primes \begin{small}$q \neq 3$.\end{small} If \begin{small}$q \equiv 1 \pmod{3}$,\end{small} then \begin{small}$q$\end{small} splits as a product of two (distinct) primes in \begin{small}$K$,\end{small} so \begin{small}$|\kappa_{K_\q}| = q \equiv 1 \pmod{3}$.\end{small} On the other hand, if \begin{small}$q \equiv 2 \pmod{3}$,\end{small} then \begin{small}$q$\end{small} is inert in \begin{small}$K$.\end{small} Then \begin{small}$K_\q$\end{small} is an unramified quadratic extension of \begin{small}$\Q_q$,\end{small} hence \begin{small}$|\kappa_{K_\q}| = q^2 = 1 \pmod{3}$.\end{small} Thus for \begin{small}$q \neq 3$, $|\kappa_{K_\q}| \equiv 1 \pmod{3}$.\end{small} Since \begin{small}$|\kappa_{L_\mathfrak{q}}|$\end{small} is either \begin{small}$|\kappa_{K_\q}|$\end{small} or \begin{small}$|\kappa_{K_\q}|^2$,\end{small} we see that \begin{small}$|\kappa_{L_\mathfrak{q}}| \equiv 1 \pmod{3}$\end{small} for \begin{small}$q \neq 3$.\end{small} We conclude that
\begin{small}
$$|\OO_F^*/\OO_F^{*3}| = \Big[\frac{\Z}{(|\kappa_F|-1)\Z} : 3 \frac{\Z}{(|\kappa_F|-1)\Z}\Big] \Big[\frac{\Z}{q^s\Z} : 3 \frac{\Z}{q^s\Z}\Big] \Big[ \Z_q^r : 3 \Z_q^r\Big] =3.$$
\end{small}
Now if \begin{small}$q=3$,\end{small} then \begin{small}$\OO_{K_\p}^* \cong \frac{\Z}{2\Z} \times \frac{\Z}{3\Z} \times \Z_3^2$\end{small}  implies \begin{small}$|\OO_{K_\p}^*/\OO_{K_\p}^{*3}| = 3^3.$\end{small} 
Similarly, \begin{small}$\OO_{L_\mathfrak{p}}^* \cong \frac{\Z}{8\Z} \times \frac{\Z}{3\Z} \times \Z_3^4$\end{small} implies \begin{small}$|\OO_{L_\mathfrak{p}}^*/\OO_{L_\mathfrak{p}}^{*3}| = 3^5.$\end{small}
Combining all of these, the claim follows.
\end{proof}

Recall that for a local field \begin{small}$F$\end{small} with the ring of integers \begin{small}$\OO_F$\end{small} and uniformizer \begin{small}$\pi$, $U^n_F:=1+\pi^n\OO_F$.\end{small} Given an element \begin{small}$u \in \OO_F^*$,\end{small} we let \begin{small}$\overline{u}$\end{small} denote its image in \begin{small}${\OO_F^*}/{U^n_F} \cong (\OO_F/{\pi^n\OO_F})^*$.\end{small}

\begin{defn}\label{defnofVF}
For a local field \begin{small}$F$,\end{small} define
\begin{small}$$V_F = \{ u \in \OO_F^* \mid \overline{u}  \in  {\OO_F^*}/{U^3_F} \text{ satisfies } \overline{u} = \alpha^3 \text{ for some } \alpha \in {\OO_F^*}/{U^3_F} \}.$$\end{small}
\end{defn}
It is obvious from the definition that \begin{small}$\OO_F^{*3} \subseteq V_F$\end{small} and \begin{small}$U^3_F\subseteq V_F$.\end{small}

\begin{lemma}\label{cubesinVF}
For a field \begin{small}$F \in \{K_\p,L_\mathfrak{p}\}$,\end{small} we have \begin{small}$|V_F/\OO_F^{*3}| =3$.\end{small}
\end{lemma}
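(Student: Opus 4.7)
The plan is to recast $V_F/\OO_F^{*3}$ as an index in the standard unit filtration $\OO_F^*\supset U^1_F\supset U^2_F\supset U^3_F$. First I observe that every cube class in $\OO_F^*/U^3_F$ is the image of an actual cube from $\OO_F^*$: given $\bar{u} = \bar{v}^3$, picking any lift $v$ of $\bar{v}$ yields $u = v^3 w$ with $w \in U^3_F$. Hence $V_F = \OO_F^{*3}\cdot U^3_F$, and a standard isomorphism theorem gives $V_F/\OO_F^{*3}\cong U^3_F/(\OO_F^{*3}\cap U^3_F)$.

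Next I identify the intersection. Writing any $u\in\OO_F^*$ as $u = \tau w$ with $\tau$ a Teichm\"uller lift (of order coprime to $3$, since $|\kappa_F^*|\in\{2,8\}$) and $w\in U^1_F$, the condition $u^3\in U^3_F\subseteq U^1_F$ forces the Teichm\"uller component $\tau^3$ to lie in $U^1_F$, which is impossible unless $\tau = 1$. Hence $\OO_F^{*3}\cap U^3_F = (U^1_F)^3$, so
\[
V_F/\OO_F^{*3}\ \cong\ U^3_F/(U^1_F)^3.
\]

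The final step is an index computation. Expanding $(1+x)^3 = 1+3x+3x^2+x^3$ together with $v_F(3) = e(F/\Q_3) = 2$ shows that $v_F(x)\ge 1$ implies $v_F(u^3 - 1)\ge 3$; hence $(U^1_F)^3\subseteq U^3_F$, and
\[
|V_F/\OO_F^{*3}|\ =\ \frac{[U^1_F:(U^1_F)^3]}{[U^1_F:U^3_F]}.
\]
The denominator equals $|\kappa_F|^2$ via the standard isomorphisms $U^i_F/U^{i+1}_F\cong\kappa_F^+$. For the numerator I use the topological isomorphism $U^1_F\cong\mu_3\times\Z_3^{[F:\Q_3]}$ of $\Z_3$-modules -- the torsion subgroup is exactly $\mu_3$ because $[\Q_3(\zeta_9):\Q_3] = 6$ exceeds $[F:\Q_3]\in\{2,4\}$, ruling out $\zeta_9\in F$ -- giving $[U^1_F:(U^1_F)^3] = 3\cdot 3^{[F:\Q_3]}$. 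Since $e(F/\Q_3) = 2$ in both admissible cases ($K_\p/\Q_3$ is totally ramified of degree $2$ and the paper's running hypotheses render $L_\mathfrak{p}/K_\p$ unramified), one has $|\kappa_F| = 3^{[F:\Q_3]/2}$, and the ratio collapses to $3$.

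The main obstacle is justifying $e(F/\Q_3) = 2$ in the $F = L_\mathfrak{p}$ case; this relies on the paper's assumption (such as $3\nmid a$) making $L_\mathfrak{p}/K_\p$ the unramified quadratic extension. Once that is in hand, every remaining step reduces to standard manipulations with local units, and the answer falls out of a single cancellation of $3$-powers.
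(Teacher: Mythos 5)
Your argument is correct, and it reaches the count by a somewhat different decomposition than the paper's. The paper's proof writes $|V_F/\OO_F^{*3}|=|\OO_F^*/\OO_F^{*3}|\,/\,|\OO_F^*/V_F|$, computes $|\OO_F^*/V_F|=|U^1_F/U^3_F|$ from $V_F/U^3_F\cong\big(\OO_F^*/U^3_F\big)^3$ together with the elementary abelian structure of $U^1_F/U^3_F$, and then quotes the value of $|\OO_F^*/\OO_F^{*3}|$ from Prop.~\ref{N1subgrp}. You instead observe $V_F=\OO_F^{*3}\cdot U^3_F$, identify $\OO_F^{*3}\cap U^3_F=(U^1_F)^3$ by splitting off the Teichm\"uller component, and obtain the intrinsic statement $V_F/\OO_F^{*3}\cong U^3_F/(U^1_F)^3$, which you then count via $U^1_F\cong\mu_3\times\Z_3^{[F:\Q_3]}$ (using $\zeta_9\notin F$) and $[U^1_F:U^3_F]=|\kappa_F|^2$. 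The inputs are the same circle of facts about local units, but your version is self-contained (it does not lean on the count in Prop.~\ref{N1subgrp}), and the isomorphism $V_F/\OO_F^{*3}\cong U^3_F/(U^1_F)^3$ dovetails with Lemma~\ref{V3U3U4}, where $(U^1_{K_\p})^3=U^4_{K_\p}$. Concerning the one point you flag: unramifiedness of $L_\mathfrak{p}/K_\p$ is indeed needed (for a ramified quadratic extension the index would be $27$, not $3$), but this is precisely what the paper's own proof also uses implicitly (via $U^1_{L_\mathfrak{p}}/U^3_{L_\mathfrak{p}}\cong(\Z/3\Z)^{\oplus 4}$ and $|\OO_{L_\mathfrak{p}}^*/\OO_{L_\mathfrak{p}}^{*3}|=3^5$), and in the paper's setting it is automatic without invoking $3\nmid a$: since $a\in\Z$ and $\upsilon_\p(3)=2$, the valuation $\upsilon_\p(a)=2\upsilon_3(a)$ is even, so whenever $L_\mathfrak{p}$ is a field it is the unramified quadratic extension of $K_\p$.
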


\begin{proof}
Note that \begin{small}$|V_F/\OO_F^{*3}| = \frac{|\OO_F^*/\OO_F^{*3}|}{|\OO_F^*/V_F|}.$\end{small} First we compute \begin{small}$|\OO_F^*/V_F| = \frac{|\OO_F^*/U^3_F|}{|V_F/U^3_F|}.$\end{small} It is easy to see that \begin{small}$V_F/{U^3_F} \cong \big(\OO_F^*/U^3_F\big)^3$\end{small} and so \begin{small}$|\OO_F^*/V_F| = \frac{|\OO_F^*/U^3_F|}{|(\OO_F^*/U^3_F)^3|}.$\end{small} 
We know that \begin{small}$\frac{\OO_F^*}{U^3_F} \cong \frac{\Z}{(|\kappa_F|-1)\Z} \times \frac{U^1_F}{U^3_F}$.\end{small} The structure of \begin{small}$U_F^1/U_F^3$\end{small} is well known; \begin{small}$\frac{U_{K_\p}^1}{U_{K_\p}^3} \cong \big( \frac{\mathbb{Z}}{3\mathbb{Z}} \big)^{\oplus 2}$\end{small} and \begin{small}$\frac{U_{L_\mathfrak{p}}^1}{U_{L_\mathfrak{p}}^3} \cong \big( \frac{\mathbb{Z}}{3\mathbb{Z}} \big)^{\oplus 4}$.\end{small} 
Therefore, \begin{small}$|\OO_F^*/V_F| = |U^1_F/U^3_F| = \begin{cases} 9, & \text{ if } F=K_\p,\\ 81, & \text{ if } F=L_\mathfrak{p}.\end{cases}$\end{small}
We have computed \begin{small}$|\OO_F^*/\OO_F^{*3}|=\begin{cases} 27, & \text{ if } F=K_\p,\\ 243, & \text{ if } F=L_\mathfrak{p},\end{cases}$\end{small} (in the proof of Prop. \ref{N1subgrp}).
The result is immediate from these observations.
\end{proof}

\begin{defn}\label{defnofV3}
We define a subset \begin{small}$V_3$\end{small} of \begin{small}$A_\p^*$\end{small}:
\begin{small}$$V_3=\begin{cases} \{u \in V_{L_\mathfrak{p}} \mid N_{L_\mathfrak{p}/K_\p}(u) \in \OO_{K_\p}^{*3} \}, & \text{  if  } a \notin K_\p^{*2},\\
\{ (u_1,u_2) \in V_{K_\p} \times V_{K_\p}  \mid u_1u_2 \in \OO_{K_\p}^{*3}\}, & \text{  if  } a \in K_\p^{*2}. \end{cases}$$\end{small}
\end{defn}
Note that \begin{small}$A_\p^{*3} \subseteq V_3$\end{small} and \begin{small}$V_3/A_\p^{*3}  \subseteq (A_\p^*/A_\p^{*3})_{N=1}$,\end{small} whether or not \begin{small}$a$\end{small} is a square in \begin{small}$K_\p^*$.\end{small} 

\begin{proposition}\label{U3cubes}
\begin{enumerate}
\item There exists a unit \begin{small}$u = 1+ \p^3 \beta \in U^3_{K_\p}$,\end{small} such that its image \begin{small}$\overline{u}$\end{small} in \begin{small}$\OO^*_{K_\p}/{U^4_{K_\p}}$\end{small} is not a cube i.e. \begin{small}$\overline{u} \notin (\OO^*_{K_\p}/{U^4_{K_\p}})^{3}$.\end{small} In particular, this implies \begin{small}$\beta \notin \p$.\end{small}
\item Let \begin{small}$a \notin K_\p^{*2}$\end{small} and consider the field \begin{small}$L_\mathfrak{p}$.\end{small} There exists a unit \begin{small}$u \in U^3_{L_\mathfrak{p}}$,\end{small} such that \begin{small}$N_{L_\mathfrak{p}/K_\p}(u)  \notin \OO_{K_\p}^{*3}$.\end{small} In particular, we deduce that \begin{small}$V_3 \subsetneqq V_{L_\mathfrak{p}}$.\end{small}
\end{enumerate}
\end{proposition}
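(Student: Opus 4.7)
The plan is to prove (1) by an explicit filtration computation, then deduce (2) immediately from (1) by choosing a suitable element of $K_\p$ viewed inside $L_\mathfrak{p}$.

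For (1), the strategy is to show the cube map on the finite group $\OO_{K_\p}^*/U^4_{K_\p}$ sends everything into the image of $\mu_2$, so in particular no nontrivial element of $U^3_{K_\p}/U^4_{K_\p}$ is a cube. The key input is the identity $3 = -\zeta_3^{-1}\p^2$, obtained from $\p^2=(1-\zeta_3)^2=-3\zeta_3$. Since $\OO_{K_\p}^* \cong \mu_6 \times U^1_{K_\p}$ and cubing kills $\mu_3$ while leaving $\mu_2$ invariant, I only need to analyze cubing on $U^1_{K_\p}$ modulo $U^4_{K_\p}$. For $w = 1+\p y \in U^1_{K_\p}$, the plan is to expand
\begin{equation*}
w^3 = 1 + 3\p y + 3\p^2 y^2 + \p^3 y^3 \equiv 1 + \p^3\bigl(-\zeta_3^{-1} y + y^3\bigr) \pmod{U^4_{K_\p}},
\end{equation*}
using $3\p y = -\zeta_3^{-1}\p^3 y$ and $3\p^2 y^2 \in \p^4\OO_{K_\p}$. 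I then reduce the coefficient modulo $\p$: since $\zeta_3 \equiv 1 \pmod{\p}$ and $y^3 \equiv y \pmod{\p}$ in $\F_3$ by Fermat, one gets $-\zeta_3^{-1}y+y^3 \equiv -y+y = 0 \pmod{\p}$. Hence $w^3 \in U^4_{K_\p}$ for every $w \in U^1_{K_\p}$, so any $u = 1+\p^3\beta$ with $\beta \notin \p$ gives a non-cube in $\OO_{K_\p}^*/U^4_{K_\p}$; taking for instance $\beta = 1$ produces the desired unit, and the requirement $\beta \notin \p$ is forced so that $\bar u \neq 1$ in $U^3_{K_\p}/U^4_{K_\p}$.

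For (2), take $u = 1+\p^3 \in \OO_{K_\p}^* \subseteq \OO_{L_\mathfrak{p}}^*$. Since $v_{L_\mathfrak{p}}(\p^3) \geq 3$ irrespective of whether $L_\mathfrak{p}/K_\p$ is ramified or unramified, we have $u \in U^3_{L_\mathfrak{p}} \subseteq V_{L_\mathfrak{p}}$. Because $u$ is fixed by $\Gal(L_\mathfrak{p}/K_\p)$,
\begin{equation*}
N_{L_\mathfrak{p}/K_\p}(u) = u^2 = 1 + 2\p^3 + \p^6 \equiv 1 + 2\p^3 \pmod{U^4_{K_\p}},
\end{equation*}
and $2 \not\equiv 0 \pmod{\p}$, so part (1) says this norm is not a cube in $\OO_{K_\p}^*$. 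Thus $u \in V_{L_\mathfrak{p}} \setminus V_3$, giving the strict inclusion $V_3 \subsetneq V_{L_\mathfrak{p}}$.

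The main obstacle is entirely the filtration computation in (1); the rest is bookkeeping. The delicacy there is recognising the cancellation $-\zeta_3^{-1}y + y^3 \equiv 0 \pmod{\p}$, which relies on the coincidence that $\zeta_3$ is trivial in the residue field $\F_3$ combined with Fermat's little theorem, forcing the cubing map on $U^1/U^4$ to be zero rather than simply surjective onto some proper subgroup.
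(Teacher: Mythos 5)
Your proposal is correct, and both parts rest on the same underlying facts as the paper, just packaged differently. For (1), you prove directly that cubing is trivial on $U^1_{K_\p}/U^4_{K_\p}$ via the expansion $w^3\equiv 1+\p^3(y^3-\zeta^{-1}y)\pmod{\p^4}$ together with $\zeta\equiv 1\pmod{\p}$ and $y^3\equiv y\pmod{\p}$; the paper instead argues by order-counting (the class of $u$ has order $3$ in $U^3_{K_\p}/U^4_{K_\p}$, while $\big(\OO^*_{K_\p}/U^4_{K_\p}\big)^3\cong\Z/2\Z$), but that count ultimately rests on the same expansion, which appears essentially verbatim in the paper's Lemma \ref{V3U3U4}; so your route just makes the key computation explicit. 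One small slip: $\OO^*_{K_\p}\cong\mu_6\times U^1_{K_\p}$ is not a direct product decomposition, since $\mu_3\subset U^1_{K_\p}$; the correct splitting is $\mu_2\times U^1_{K_\p}$. This is harmless, because all you use is that every unit is $\pm$ a principal unit, whence cubes in $\OO^*_{K_\p}/U^4_{K_\p}$ are $\pm 1$, and $1+\p^3\beta$ with $\beta\notin\p$ is neither. For (2), the paper starts from the non-cube $u$ of part (1) and extracts a square root $u_0\in U^3_{K_\p}\setminus U^4_{K_\p}$ (using surjectivity of squaring on $U^3_{K_\p}$), so that $N_{L_\mathfrak{p}/K_\p}(u_0)=u_0^2=u$; you go the other way, taking the concrete element $1+\p^3$ and checking that its norm $1+2\p^3+\p^6$ again lies in $U^3_{K_\p}\setminus U^4_{K_\p}$. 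This works because your proof of (1), like the paper's, actually establishes the stronger statement that \emph{every} element of $U^3_{K_\p}\setminus U^4_{K_\p}$ is a non-cube, which is the form you invoke; it spares you the surjectivity-of-squaring step, and, as you note, does not even require $L_\mathfrak{p}/K_\p$ to be unramified (which it is in any case here, since $\upsilon_\p(a)$ is even).
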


\proof
\begin{enumerate}
    \item  Consider the following short exact sequence with canonical maps:
\begin{small}
$$1 \longrightarrow \frac{U_{K_\p}^3}{U_{K_\p}^4} \longrightarrow \frac{\OO_{K_\p}^*/U_{K_\p}^4}{\big(\OO_{K_\p}^*/U_{K_\p}^4\big)^3} \longrightarrow \frac{\OO_{K_\p}^*/U_{K_\p}^3}{\big(\OO_{K_\p}^*/U_{K_\p}^3\big)^3} \longrightarrow 1.$$
\end{small}
Choose any unit \begin{small}$u = 1+\p^3\beta \in U_{K_\p}^3 \setminus U_{K_\p}^4$.\end{small} This implies \begin{small}$\beta \notin \p$.\end{small} Further, \begin{small}$\overline{u} \in \frac{U_{K_\p}^3}{U_{K_\p}^4}$\end{small} has order \begin{small}$o(\overline{u})=3$\end{small} as \begin{small}$\frac{U_{K_\p}^3}{U_{K_\p}^4} \cong \frac{\OO_{K_\p}}{\p} \cong \frac{\Z}{3\Z}$.\end{small} Via \begin{small}$\frac{U_{K_\p}^3}{U_{K_\p}^4} \hookrightarrow \frac{\OO_{K_\p}^*}{U_{K_\p}^4}$,\end{small} consider \begin{small}$\overline{u} \in \OO_{K_\p}^*/U_{K_\p}^4$.\end{small} As \begin{small}$\big(\OO_{K_\p}^*/U_{K_\p}^4\big)^3 \cong \frac{\Z}{2\Z}$,\end{small} we see that \begin{small}$\overline{u} \notin \big(\OO_{K_\p}^*/U_{K_\p}^4\big)^3$.\end{small}
\item First note that the map \begin{small}$x \mapsto x^2$\end{small} on \begin{small}$U^3_{K_\p}$\end{small} is surjective, since \begin{small}$U^3_{K_\p} \cong \p^3\OO_{K_\p} \cong \OO_{K_\p}$\end{small} and the multiplication by $2$ map on \begin{small}$\OO_{K_\p}$\end{small} is surjective. Now, given any \begin{small}$u \in U^3_{K_\p} \setminus U^4_{K_\p}$,\end{small} there exists an element \begin{small}$u_0 \in U^3_{K_\p} \setminus U^4_{K_\p}$\end{small} such that \begin{small}$u_0^2=u$.\end{small} Note that \begin{small}$a \notin K_\p^{*2}$\end{small} implies \begin{small}$\p$\end{small} remains inert in the field \begin{small}$L_\mathfrak{p}$\end{small} and hence \begin{small}$N_{L_\mathfrak{p}/K_\p}(u_0)=u$.\end{small} We now choose \begin{small}$u$\end{small} as in part(1) and fix \begin{small}$u_0$\end{small} so that \begin{small}$u_0^2=u$.\end{small} Note that  \begin{small}$u_0 \in U^3_{K_\p} \subset U^3_{L_\mathfrak{p}} \subset V_{L_\mathfrak{p}}$.\end{small} Then we claim that \begin{small}$N_{L_\mathfrak{p}/K_\p}(u_0)  \notin \OO_{K_\p}^{*3}$.\end{small} Indeed, if \begin{small}$u \in \OO_{K_\p}^{*3}$,\end{small} then \begin{small}$\overline{u} \in \big(\OO_{K_\p}^*/U_{K_\p}^4\big)^3$,\end{small} which is a contradiction by part(1). Hence, \begin{small}$u_0$\end{small} is the required unit and in particular, \begin{small}$u_0 \in V_{L_\mathfrak{p}} \setminus V_3$.\end{small} \qed
\end{enumerate}

\begin{lemma}\label{V3U3U4}
Let \begin{small}$a \in K_\p^{*2}$.\end{small} Then \begin{small}$\Big(\frac{A_\p^*}{A_\p^{*3}}\Big)_{N=1} \cong \frac{U^1_{K_\p}}{U^4_{K_\p}}$\end{small} and \begin{small}$\frac{V_3}{A_\p^{*3}} \cong \frac{U^3_{K_\p}}{U^4_{K_\p}}$\end{small}.
\end{lemma}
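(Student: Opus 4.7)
The plan is to exploit the splitting $A_\p \cong \OO_{K_\p} \times \OO_{K_\p}$ (valid since $a \in K_\p^{*2}$), under which the norm becomes multiplication of coordinates. For the first isomorphism, the map $\phi : \OO_{K_\p}^* \to (A_\p^*/A_\p^{*3})_{N=1}$ given by $u \mapsto (u, u^{-1}) \bmod A_\p^{*3}$ is easily seen to be surjective (any norm-one pair $(u_1,u_2)$ with $u_1u_2 = c^3$ differs from $(u_1, u_1^{-1})$ by the cube $(1, c^3)$) and to have kernel $\OO_{K_\p}^{*3}$. So it is enough to identify $\OO_{K_\p}^*/\OO_{K_\p}^{*3}$ with $U^1_{K_\p}/U^4_{K_\p}$. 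Via the Teichm\"uller splitting $\OO_{K_\p}^* \cong \mu_2 \times U^1_{K_\p}$, and noting $\mu_2 \subseteq \OO_{K_\p}^{*3}$ (since $(-1)^3=-1$), this reduces to showing $(U^1_{K_\p})^3 = U^4_{K_\p}$; both quotients have order $27$ (from Prop.~\ref{N1subgrp} and the successive identifications $|U^n/U^{n+1}| = 3$), so only one inclusion is needed.

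The key computation is $(U^1_{K_\p})^3 \subseteq U^4_{K_\p}$. Using $\p^2 = -3\zeta_3$, equivalently $3 = -\zeta_3^{-1}\p^2$, for $u = 1+\p x \in U^1_{K_\p}$ one expands
\[ u^3 = 1 + 3\p x + 3\p^2 x^2 + \p^3 x^3 \equiv 1 + \p^3(x^3 - \zeta_3^{-1}x) \pmod{\p^4}.\]
Since $\zeta_3 \equiv 1 \pmod{\p}$ and the residue field is $\F_3$ (so $x^3 \equiv x \pmod{\p}$), the inner expression vanishes mod $\p$, giving $u^3 \in U^4_{K_\p}$. Consequently $\OO_{K_\p}^{*3} = \mu_2 \cdot U^4_{K_\p}$, and by the definition of $V_{K_\p}$ together with Lemma~\ref{cubesinVF}, one obtains $V_{K_\p} = \OO_{K_\p}^{*3} \cdot U^3_{K_\p} = \mu_2 \cdot U^3_{K_\p}$.

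For the second isomorphism, the above description yields
\[ V_3 = \{(u_1,u_2) \in (\mu_2 U^3_{K_\p})^2 : u_1u_2 \in \mu_2 U^4_{K_\p}\}, \qquad A_\p^{*3} = (\mu_2 U^4_{K_\p})^2.\]
Passing to the quotient $V_3/A_\p^{*3}$, the $\mu_2$-signs are absorbed and each coordinate reduces to $U^3_{K_\p}/U^4_{K_\p} \cong \Z/3\Z$; the constraint becomes $\bar{v}_1\bar{v}_2 = 1$. Hence $V_3/A_\p^{*3}$ is the kernel of the multiplication map $U^3_{K_\p}/U^4_{K_\p} \times U^3_{K_\p}/U^4_{K_\p} \to U^3_{K_\p}/U^4_{K_\p}$, which is isomorphic to $U^3_{K_\p}/U^4_{K_\p}$ via $\bar v \mapsto (\bar v, \bar v^{-1})$. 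The main obstacle is the explicit verification $(U^1_{K_\p})^3 \subseteq U^4_{K_\p}$, which requires carefully tracking the ramification $\p^2 \sim 3$ and using $\zeta_3 \equiv 1 \pmod{\p}$ to annihilate the cross terms mod $\p^4$.
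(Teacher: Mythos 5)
Your proof is correct. For the first isomorphism you follow essentially the paper's own route: reduce $\big(A_\p^*/A_\p^{*3}\big)_{N=1}$ to $\OO_{K_\p}^*/\OO_{K_\p}^{*3}$ via the splitting $A_\p \cong \OO_{K_\p}\times\OO_{K_\p}$, split off the prime-to-$3$ part $\mu_2$, expand the cube of $1+\p x$ using $\p^2=-3\zeta$ to get $(U^1_{K_\p})^3\subseteq U^4_{K_\p}$, and force equality by the order count $27=27$ coming from Prop.~\ref{N1subgrp}. For the second isomorphism you genuinely reorganize the argument: the paper defines the map $\bar f: U^3_{K_\p}/U^4_{K_\p}\to V_3/A_\p^{*3}$, $u\mapsto(\overline{u},\overline{u}^2)$, proves injectivity by invoking Prop.~\ref{U3cubes}(1) (a unit in $U^3_{K_\p}\setminus U^4_{K_\p}$ that is not a cube), and gets surjectivity from the count $|V_3/A_\p^{*3}|=|V_{K_\p}/\OO_{K_\p}^{*3}|=3$ of Lemma~\ref{cubesinVF}; you instead upgrade the first-half computation to the structural identities $\OO_{K_\p}^{*3}=\mu_2\, U^4_{K_\p}$ and (again via Lemma~\ref{cubesinVF}) $V_{K_\p}=\mu_2\, U^3_{K_\p}$, and then read off $V_3/A_\p^{*3}$ directly as the kernel of the multiplication map on $\big(U^3_{K_\p}/U^4_{K_\p}\big)^2$. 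What this buys you: injectivity is automatic, Prop.~\ref{U3cubes}(1) is no longer needed for this lemma (it even falls out as a byproduct, since $U^3_{K_\p}\cap\mu_2 U^4_{K_\p}=U^4_{K_\p}$), and the only external input is the same Lemma~\ref{cubesinVF}; the paper's version, by contrast, works with the map $\bar f$ and keeps the counting and the non-cube existence as two separate inputs. One small point you should make explicit: the step $V_{K_\p}=\OO_{K_\p}^{*3}\cdot U^3_{K_\p}$ uses $U^3_{K_\p}\cap\OO_{K_\p}^{*3}=U^4_{K_\p}$ (equivalently $-1\notin U^1_{K_\p}$) in the index computation, and the same fact is used again when you "absorb the $\mu_2$-signs" to turn the condition $u_1u_2\in\mu_2 U^4_{K_\p}$ into $v_1v_2\in U^4_{K_\p}$; this is easy but worth a line, and it is not a genuine gap.
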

\begin{proof}
First note that we have a canonical isomorphism  \begin{small}$\Big(\frac{A_{\p}^*}{A_{\p}^{*3}}\Big)_{N=1} \cong \frac{\OO^*_{K_\p}}{\OO^{*3}_{K_\p}}$.\end{small} We know that \begin{small}$\OO_{K_\p}^* \cong \frac{\Z}{2\Z} \times U^1_{K_\p}$\end{small} and hence, \begin{small}$\frac{\OO_{K_\p^*}}{\OO_{K_\p^{*3}}} \cong \frac{U^1_{K_\p}}{(U^1_{K_\p})^3}$.\end{small} Now if we take \begin{small}$\alpha=1+\p \beta \in U^1_{K_\p}$,\end{small} then by substituting \begin{small}$\zeta=1-\p$\end{small} and \begin{small}$\p^2\zeta^2=-3$,\end{small} it is easy to see that \begin{small}$\alpha^3 \equiv 1+\p^3(\beta^3-\beta) \pmod{\p^4}$.\end{small} As \begin{small}$x \mapsto x^3$\end{small} is the identity automorphism on \begin{small}$\frac{\OO_{K_\p}}{\p\OO_{K_\p}} \cong \frac{\Z}{3\Z}$,\end{small} we see that \begin{small}$\beta^3 \equiv \beta \pmod \p$.\end{small} So, \begin{small}$\alpha^3 \equiv 1 \pmod{\p^4}$\end{small} and \begin{small}$\big(U^1_{K_\p}\big)^3 \subset U^4_{K_\p}$.\end{small} Note that \begin{small}$\Big|\frac{\OO^*_{K_\p}}{\OO^{*3}_{K_\p}}\Big|= \Big|\frac{U^1_{K_\p}}{(U^1_{K_\p})^3}\Big|=27$\end{small} and \begin{small}$\frac{U^1_{K_\p}}{U^4_{K_\p}}\cong \big(\frac{\Z}{3\Z}\big)^{\oplus 3}$,\end{small} so \begin{small}$\Big|\frac{U^1_{K_\p}}{U^4_{K_\p}}\Big|=27$.\end{small} Thus, \begin{small}$\big(U^1_{K_\p}\big)^3 = U^4_{K_\p}$\end{small} and \begin{small}$\Big(\frac{A_\p^*}{A_\p^{*3}}\Big)_{N=1} \cong \frac{U^1_{K_\p}}{U^4_{K_\p}}$.\end{small}

Recall that \begin{small}$U^3_{K_\p} \subset V_{K_\p}$.\end{small} So by the definition of \begin{small}$V_3$,\end{small} there is an obvious map \begin{small}$f:U^3_{K_\p} \to \frac{V_3}{A_\p^{*3}}$\end{small} given by \begin{small}$u \mapsto (\overline{u},\overline{u}^2)$.\end{small}   
Now, \begin{small}$\big(U^1_{K_\p}\big)^3 = U^4_{K_\p} \subset{\rm Ker } f$\end{small} and there is an induced map \begin{small}$\overline{f}: \frac{U^3_{K_\p}}{U^4_{K_\p}} \to \frac{V_3}{A_\p^{*3}}$.\end{small}  Note that the order of \begin{small}$\frac{U^3_{K_\p}}{U^4_{K_\p}}$\end{small} is $3$. Further, by Prop. \ref{U3cubes}(1), \begin{small}$\exists$\end{small} a unit \begin{small}$u \in U^3_{K_\p} \setminus U^4_{K_\p}$\end{small} such that \begin{small}$u \notin \OO_{K_\p}^{*3}$,\end{small} so  \begin{small}$\overline{f}$\end{small} is injective.  We claim the order of \begin{small}$\frac{V_3}{A_\p^{*3}}$\end{small} is $3$ and hence \begin{small}$\overline{f}$\end{small} is an isomorphism. Indeed, we have a canonical isomorphism \begin{small}$\frac{V_3}{A_\p^{*3}} \cong \frac{V_{K_\p}}{{\OO_{K_\p}^{*3}}}$\end{small} and by Lemma \ref{cubesinVF}, \begin{small}$\big|\frac{V_{K_\p}}{{\OO_{K_\p}^{*3}}}\big|=3$.\end{small}
\end{proof}

\begin{proposition}\label{sizeofV3}
We have  \begin{small}$|V_3/A_\p^{*3}| =
1$\end{small}  if \begin{small}$a \notin K_\p^{*2}$\end{small}  and \begin{small}$|V_3/A_\p^{*3}| =3$\end{small}  if  \begin{small}$a \in K_\p^{*2}$.\end{small}
\end{proposition}

\begin{proof}
If \begin{small}$a \notin K_\p^{*2}$\end{small} i.e. \begin{small}$A_\p=\OO_{L_\mathfrak{p}}$\end{small} is the ring of integers of the field \begin{small}$L_\p$,\end{small} then \begin{small}$A_\p^{*3} \subset V_3 \subsetneqq V_{L_\mathfrak{p}}$\end{small} (Prop. \ref{U3cubes}(2)). Since \begin{small}$|V_3/ A_\p^{*3}|$\end{small} divides \begin{small}$|V_{L_\mathfrak{p}}/A_\p^{*3}|=3$,\end{small} the result follows.

In the case when \begin{small}$a \in K^{*2}$,\end{small} the proof is immediate from  Lemma \ref{V3U3U4}.
\end{proof}

We make a remark regarding some notational modification.
\begin{rem}\label{valu34}
For any non-archimedean local field \begin{small}$F$\end{small} with discrete valuation \begin{small}$v$\end{small} and \begin{small}$\overline{x} \in {F}^*/{{F}^{*3}}$,\end{small}  define \begin{small}$v(\overline{x}) \in \Z/{3\Z}$\end{small} by \begin{small}$v(\overline{x}):= v(x) \pmod 3$,\end{small} for  any lift \begin{small}$x \in F^*$.\end{small} 
Similarly, we define \begin{small}$v(\overline{y}, \overline{z}):=(v(y) \pmod 3, v(z) \pmod 3) \in \Z/{3\Z} \times \Z/3\Z$\end{small} for \begin{small}$(\overline{y},\overline{z}) \in F^*/{F^{*3}} \times F^*/{F^{*3}}$.\end{small} If both \begin{small}$v(y), v(z) \equiv 0 \pmod 3$,\end{small}  then by abuse of notation, we write \begin{small}$v(\overline{y},\overline{z}) \equiv 0 \pmod{3}$\end{small}. 
\end{rem}

For a prime \begin{small}$\q \in \Sigma_K$,\end{small} we denote the induced valuation on \begin{small}$L_\q$\end{small} simply by \begin{small}$\upsilon_\q$.\end{small}

\begin{lemma}\label{norm1mult3}
For an element \begin{small}$\overline{x}$\end{small} of \begin{small}$(L_\q^*/L_\q^{*3})_{N=1}$,\end{small} one has \begin{small}$\overline{x} \in (A_\q^*/A_\q^{*3})_{N=1}$\end{small} if and only if \begin{small}$\upsilon_\q(\overline{x}) \equiv 0 \pmod 3$.\end{small}
\end{lemma}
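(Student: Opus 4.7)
The plan is to prove both directions directly, using the description of $A_\q$ from \S\ref{iso}: namely $A_\q = \OO_{L_\mathfrak{Q}}$ when $a \notin K_\q^{*2}$, and $A_\q = \OO_{K_\q} \times \OO_{K_\q}$ when $a \in K_\q^{*2}$, with the valuation $\upsilon_\q$ interpreted componentwise in the split case (per Remark \ref{valu34}).

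The forward implication ($\Rightarrow$) is immediate: any lift $x \in A_\q^*$ of $\overline{x}$ is a unit, so $\upsilon_\q(x) = 0$, and hence $\upsilon_\q(\overline{x}) \equiv 0 \pmod 3$.

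For the converse ($\Leftarrow$), the idea is to start with an arbitrary lift $x \in L_\q^*$ of $\overline{x}$ and modify it by a cube so that it lands in $A_\q^*$. Since $\upsilon_\q(x) \equiv 0 \pmod 3$, I can pick $y \in L_\q^*$ with $\upsilon_\q(y) = \upsilon_\q(x)/3$ — possible because the valuation surjects onto $\Z$ on each factor of $L_\q$ — so that $xy^{-3} \in A_\q^*$ while still $\overline{xy^{-3}} = \overline{x}$ in $L_\q^*/L_\q^{*3}$.

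The only subtlety to verify is that this replacement lift still satisfies the norm-$1$ condition in the refined sense, i.e., the norm lies in $\OO_{K_\q}^{*3}$ rather than just in $K_\q^{*3}$. The hypothesis yields $N_{L_\q/K_\q}(xy^{-3}) = z^3$ for some $z \in K_\q^*$, while $xy^{-3} \in A_\q^*$ forces this norm into $\OO_{K_\q}^*$. The equality $3\upsilon_\q(z) = 0$ then gives $z \in \OO_{K_\q}^*$, so $z^3 \in \OO_{K_\q}^{*3}$, as required. I do not anticipate any serious obstacle: the whole argument is essentially the observation that units of $K_\q$ that happen to be cubes in the field are automatically cubes of units, combined with a valuation trick on the lift.
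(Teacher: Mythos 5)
Your proposal is correct and follows essentially the same route as the paper: the forward direction is the same triviality, and the converse is the paper's argument of adjusting the lift by a cube to obtain a representative in $A_\q^*$ and then noting the norm is a unit that is a cube, hence a cube of a unit. Your explicit justification of that last step (the norm lies in $\OO_{K_\q}^* \cap K_\q^{*3} = \OO_{K_\q}^{*3}$) is merely a spelled-out version of what the paper leaves implicit.
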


\begin{proof}
Let us first assume \begin{small}$L_\q$\end{small} to be a field. If \begin{small}$\overline{x} \in (A_\q^*/A_\q^{*3})_{N=1}$,\end{small} then choosing any lift \begin{small}$x \in A_\q^*$\end{small} of \begin{small}$\overline{x}$,\end{small} clearly we see that \begin{small}$\upsilon_\q({x}) =0$.\end{small} On the other hand, let \begin{small}$\upsilon_\q(\overline{x})  \equiv 0 \pmod 3$\end{small} for \begin{small}$\overline{x} \in (L_\q^*/L_\q^{*3})_{N=1}$.\end{small} Then \begin{small}$\overline{x}=\overline{\q^{3n}y}$,\end{small} where \begin{small}$y \in A_q^*$\end{small} and \begin{small}$n \in \Z$\end{small}, thus, \begin{small}$\overline{x}=\overline{y}$\end{small} in \begin{small}$(L_\q^*/L_\q^{*3})_{N=1}$\end{small} and \begin{small}$\overline{y} \in A_\q^*/A_\q^{*3}$.\end{small} As \begin{small}$N_{L_\q/K_\q}(y) \in \OO_{K_q}^{*3}$,\end{small} we deduce  \begin{small}$\overline{y}  \in (A_\q^*/A_\q^{*3})_{N=1}$.\end{small}

A similar proof works for \begin{small}$L_\q \cong K_\q \times K_\q$.\end{small}
\end{proof}
\begin{defn}\label{defofM1M2}
Let \begin{small}$S$\end{small} be a finite set of finite primes of \begin{small}$K$\end{small} and recall that \begin{small}$\OO_S$\end{small} denotes the ring of \begin{small}$S$\end{small}-integers in \begin{small}$K$.\end{small}
If \begin{small}$a \notin K^{*2}$,\end{small} then \begin{small}$L=K(\sqrt{a})$.\end{small} Let \begin{small}$S(L)=\{ \mathfrak{Q} \in \Sigma_L \mid \mathfrak{Q} \cap K \in S \}$.\end{small} Further, \begin{small}$\OO_{S(L)}$\end{small} will denote the ring of \begin{small}$S(L)$\end{small}-integers in \begin{small}$L$.\end{small} Let \begin{small}$x \in L^*$\end{small} be any lift of \begin{small}$\overline{x} \in L^*/L^{*3}$.\end{small} We define the following \begin{small}$\F_3$\end{small}-vector spaces:
\begin{small}
$$M(S,a)=\{ \overline{x} \in L^*/L^{*3} : L(\sqrt[3]{x})/L \text{ is unramified and } x \in L_\q^{*3} \text{ for all } \q  \in  S \},$$
$$N(S,a)= \{ \overline{x} \in L^*/L^{*3} : (x)=I^3 \text{ for some fractional ideal } I \text{ of } \OO_{S(L)}  \}. $$
\end{small}
\end{defn}

Note that if \begin{small}$\overline{x} \in M(S,a)$,\end{small} then any lift \begin{small}$x \in L^*$\end{small} of \begin{small}$\overline{x}$\end{small} satisfies \begin{small}$(x)=I^3$\end{small} for some fractional ideal \begin{small}$I$\end{small} of \begin{small}$\OO_{S(L)}$\end{small} (\cite[Ch.~9]{was}). This implies that \begin{small}$N_{L/K}(x) \in K^{*3}$.\end{small} So, \begin{small}$M(S,a) \subset N(S,a) \subset (L^*/L^{*3})_{N=1}$.\end{small}

\begin{defn}
Let \begin{small}$S$\end{small} be a finite set of finite primes of \begin{small}$K$.\end{small} When \begin{small}$a \in K^{*2}$,\end{small} we define 
\begin{small}$$N'(S,a):=\{ (\overline{x}_1,\overline{x}_2) \in \big(K^*/K^{*3} \times K^*/K^{*3}\big)_{N=1} \mid \upsilon_\q(\overline{x}_1,\overline{x}_2) \equiv 0 \pmod 3 \ \text{ for all } \ \q \notin S \}.$$\end{small}
\end{defn}
\begin{defn}\label{defofh3}
We assume that  \begin{small}$a \notin K^{*2}$\end{small} and denote by \begin{small}$Cl_{S(L)}(L)$\end{small} the \begin{small}$S(L)$\end{small}-ideal class group of \begin{small}$L$\end{small} i.e. the class group of \begin{small}$\OO_{S(L)}$.\end{small} Further, let \begin{small}$h^3_{S(L)}$\end{small} denote the $3$-rank of \begin{small}$Cl_{S(L)}(L)$\end{small} i.e. \begin{small}$h^3_{S(L)}:= \dim_{\F_3} Cl_{S(L)}(L) \otimes_\Z \F_3 = \dim_{\F_3}Cl_{S(L)}(L)[3]$.\end{small} In particular, if \begin{small}$S(L)$\end{small} is empty, then \begin{small}$Cl_\emptyset(L)=Cl(L)$\end{small} is the ideal class group of \begin{small}$L$\end{small} and \begin{small}$h^3_L=\dim_{\F_3} Cl(L) \otimes_\Z \F_3$.\end{small}
\end{defn}

\begin{lemma}\label{hergoltzresult}
Assume that \begin{small}$a \notin K^{*2}$\end{small} and put \begin{small}$\widetilde{\Q}_{\widehat{\phi}_a}:=\frac{\Q[x]}{(x^2+a\alpha^2)}$\end{small} and \begin{small}$\widetilde{\Q}_{{\phi}_a}:=\frac{\Q[x]}{(x^2-a)}$.\end{small}
Then the $3$-rank of the class group of \begin{small}$L$\end{small} is given by \begin{small}$h^3_L=h^3_{\widetilde{\Q}_{\widehat{\phi}_a}}+h^3_{\widetilde{\Q}_{{\phi}_a}}$\end{small} (see \cite{her}).
\qed
\end{lemma}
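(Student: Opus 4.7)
The plan is to recognize $L$ as a biquadratic extension of $\Q$ and invoke Herglotz's classical formula on class numbers of biquadratic fields, specialized to the $3$-part.

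First, since $a \notin K^{*2}$, the field $L = K(\sqrt{a}) = \Q(\zeta_3, \sqrt{a})$ is a biquadratic extension of $\Q$ with $\Gal(L/\Q) \cong \Z/2 \times \Z/2$. Its three quadratic subfields, corresponding to the three index-$2$ subgroups of $\Gal(L/\Q)$, are $K = \Q(\sqrt{-3})$, $\Q(\sqrt{a})$, and $\Q(\sqrt{-3a})$. I would then match these to the \'etale algebras in the statement: clearly $\widetilde{\Q}_{\phi_a} = \Q(\sqrt{a})$, while using that $\alpha^2 \in \{-27,\, -1/27\}$ depending on whether $27 \mid a$, in either case $a\alpha^2 \in -3a \cdot \Q^{*2}$, so $\widetilde{\Q}_{\widehat{\phi}_a} = \Q(\sqrt{a\alpha^2}) = \Q(\sqrt{-3a})$.

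Next, I would invoke Herglotz's theorem \cite{her}: for a biquadratic field $L/\Q$ with quadratic subfields $K_1, K_2, K_3$ and an odd prime $p$ coprime to $[L:\Q] = 4$, the $p$-part of the class group decomposes as a direct sum under the action of $\Gal(L/\Q)$, yielding the additive rank identity
\begin{equation*}
h^p_L = h^p_{K_1} + h^p_{K_2} + h^p_{K_3}.
\end{equation*}
Taking $p = 3$ and using that $h^3_K = 0$ (since $K = \Q(\zeta_3)$ has class number one), combined with the identifications above, I obtain
\begin{equation*}
h^3_L = h^3_{\Q(\sqrt{a})} + h^3_{\Q(\sqrt{-3a})} = h^3_{\widetilde{\Q}_{\phi_a}} + h^3_{\widetilde{\Q}_{\widehat{\phi}_a}},
\end{equation*}
as claimed.

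The main subtlety is justifying the additive rank form of Herglotz's identity. The original statement is multiplicative in the full class numbers and carries a unit-index correction, but since that correction is a power of $2$ and $p=3$ is coprime to $[L:\Q]=4$, it is invisible on the $3$-primary part and the decomposition becomes a clean direct sum. A self-contained derivation would exploit the semisimplicity of $\F_3[\Gal(L/\Q)]$ to split $Cl(L) \otimes \F_3$ into isotypic components for the four characters of $\Gal(L/\Q)$, identifying each non-trivial character's component with the $3$-class group of the corresponding fixed field via the norm/transfer maps; the trivial isotypic piece comes from $Cl(\Q) \otimes \F_3 = 0$.
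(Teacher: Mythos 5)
Your proposal is correct and follows exactly the route the paper intends: the paper gives no argument beyond citing Herglotz, and your identification of $L=\Q(\zeta_3,\sqrt{a})$ as the biquadratic field with quadratic subfields $K$, $\Q(\sqrt{a})=\widetilde{\Q}_{\phi_a}$ and $\Q(\sqrt{-3a})=\widetilde{\Q}_{\widehat{\phi}_a}$ (using $a\alpha^2\in -3a\,\Q^{*2}$ and $h^3_K=0$) is the intended reading. Your added justification of the rank-additive form — decomposing the $3$-class group under the idempotents of $\F_3[\Gal(L/\Q)]$, legitimate since $3\nmid 4$, and matching each nontrivial isotypic piece with the $3$-class group of the corresponding quadratic subfield via norm and extension maps — is the standard and correct way to pass from Herglotz's class-number statement to the equality of $3$-ranks.
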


\begin{proposition}\label{boundsgen}
We compute the dimensions of the \begin{small}$\F_3$\end{small}-modules \begin{small}$M(S,a)$\end{small} and \begin{small}$N(S,a)$\end{small} as follows:\\
If \begin{small}$a \notin K^{*2}$,\end{small} then
\begin{small}
$$\dim_{\F_3} M(S,a) = h^3_{S(L)} \ \text{ and } \ \dim_{\F_3} N(S,a)= h^3_{S(L)}+ |S(L)|+2.$$
\end{small}
On the other hand if \begin{small}$a \in K^{*2}$,\end{small} the \begin{small}$\F_3$\end{small}-dimension of \begin{small}$N'(S,a)$\end{small} is given by \begin{small}$\dim_{\F_3} N'(S,a) = |S|+1.$\end{small}
\end{proposition}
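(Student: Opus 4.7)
The plan is to treat the three dimensions separately, each via a short exact sequence or a direct identification that lays bare the roles of units and class groups. For the $M$ and $N$ computations (where $a \notin K^{*2}$), I use that $L$ is a field, totally imaginary of degree $4$ over $\Q$ (since $K = \Q(\zeta_3)$ is imaginary quadratic and $L/K$ is quadratic), so $r_1 = 0$, $r_2 = 2$, and Dirichlet's $S$-unit theorem gives $\OO_{S(L)}^*$ free rank $1 + |S(L)|$; moreover, the $3$-part of $\mu(L)$ is exactly $\mu_3$, because $[\Q(\zeta_9):\Q] = 6 > 4 = [L:\Q]$.

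For $N(S, a)$, I would set up the map $\Phi : N(S, a) \to Cl_{S(L)}(L)[3]$, $\overline{x} \mapsto [I]$, where $(x)\OO_{S(L)} = I^3$. It is well defined on $L^{*}/L^{*3}$-classes, surjective (for $[J] \in Cl_{S(L)}(L)[3]$, $J^3$ is principal with some generator $x$), and its kernel equals the image of $\OO_{S(L)}^*/\OO_{S(L)}^{*3}$ in $L^*/L^{*3}$: if $I = (y)\OO_{S(L)}$ then $x/y^3 \in \OO_{S(L)}^*$, and this image map is injective because a cube root of an $S$-unit is itself an $S$-unit. The resulting short exact sequence yields
\[
\dim_{\F_3} N(S, a) = h^3_{S(L)} + \dim_{\F_3} \OO_{S(L)}^*/\OO_{S(L)}^{*3} = h^3_{S(L)} + (1 + |S(L)|) + 1,
\]
where the last $+1$ comes from $\mu_3 \subset \mu(L)$; this gives the asserted $h^3_{S(L)} + |S(L)| + 2$.

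For $M(S, a)$, I would combine Kummer theory with class field theory. Since $\zeta_3 \in L$, the $\F_3$-lines in any subgroup of $L^*/L^{*3}$ biject with cyclic $\Z/3$-extensions of $L$. The two defining conditions of $M(S, a)$ translate exactly to ``$L(\sqrt[3]{x})/L$ is unramified at every prime of $L$'' together with ``$L(\sqrt[3]{x})/L$ splits completely at every $\mathfrak{Q} \in S(L)$'' (since $x \in L_\mathfrak{Q}^{*3}$ means $T^3 - x$ has a root, hence all roots, in $L_\mathfrak{Q}$). By class field theory, the maximal abelian extension of $L$ that is everywhere unramified and split completely at $S(L)$ has Galois group $Cl_{S(L)}(L)$, because killing the Frobenius at $\mathfrak{Q}$ amounts to killing $[\mathfrak{Q}]$ in the class group; its maximal exponent-$3$ quotient is an $\F_3$-space of dimension $h^3_{S(L)}$. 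Matching the $\F_3$-lines on both sides then forces $\dim_{\F_3} M(S, a) = h^3_{S(L)}$.

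For $N'(S, a)$, the hypothesis $a \in K^{*2}$ yields a direct identification $(K^*/K^{*3} \times K^*/K^{*3})_{N=1} \cong K^*/K^{*3}$ via $(\overline{x}_1, \overline{x}_2) \mapsto \overline{x}_1$, using $\overline{x}_2 = \overline{x}_1^{-1}$. Under this, $N'(S, a)$ corresponds to $\{\overline{x} \in K^*/K^{*3} : v_\q(x) \equiv 0 \pmod 3 \text{ for all } \q \notin S\}$, the $K$-analog of $N(S, \cdot)$. Running the same $\Phi$-argument over $K$ (whose class group is trivial, forcing $h^3_{S(K)} = 0$), then applying Dirichlet over $K$ (with $r_1 = 0$, $r_2 = 1$, giving $S$-unit rank $|S|$) and adding $1$ for $\mu_3 \subset K$, gives $\dim_{\F_3} N'(S, a) = |S| + 1$. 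The main obstacle throughout is the class field theoretic step in $M(S, a)$: one must carefully identify the Kummer-unramified subgroup of $L_\mathfrak{Q}^*/L_\mathfrak{Q}^{*3}$ at primes $\mathfrak{Q} \mid 3$ (where it is subtler than $\{v_\mathfrak{Q} \equiv 0 \pmod 3\}$) with the local conditions appearing on the CFT side.
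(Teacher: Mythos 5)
Your argument is correct and follows essentially the same route as the paper: Kummer theory plus class field theory identifying $M(S,a)$ with $\Hom(Cl_{S(L)}(L)/3Cl_{S(L)}(L),\F_3)$, the ideal map $\overline{x}\mapsto [I]$ with kernel $\OO_{S(L)}^*/\OO_{S(L)}^{*3}$ together with Dirichlet's $S$-unit theorem for $N(S,a)$, and the reduction of $N'(S,a)$ to the $S$-units of $K$ (class number one). The concern you raise at the end about identifying the Kummer-unramified subgroup of $L_{\mathfrak{Q}}^*/L_{\mathfrak{Q}}^{*3}$ at $\mathfrak{Q}\mid 3$ does not arise for this proposition, since unramifiedness of $L(\sqrt[3]{x})/L$ is imposed globally in Definition \ref{defofM1M2}; that local analysis is only needed for the alternative description in Prop. \ref{newM1M2sel}.
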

\begin{proof}
First we consider the case \begin{small}$a \notin K^{*2}$.\end{small} Recall by class field theory, \begin{small}$Cl(L) \cong Gal(H_L/L)$, 
 \end{small} where \begin{small}$H_L$\end{small} is the Hilbert class field of \begin{small}$L$\end{small} i.e. maximal abelian everywhere unramified extension of \begin{small}$L$.\end{small} Note that as \begin{small}$H_L/L$\end{small} is unramified, the decomposition subgroup of a prime \begin{small}$\mathfrak{Q}$\end{small} of \begin{small}$L$\end{small} is generated by the corresponding Frobenius element. On the other hand, \begin{small}$Cl_{S(L)}(L)$\end{small} is the quotient of the ideal class group of \begin{small}$L$\end{small} i.e. \begin{small}$Cl(L)$\end{small} by the subgroup of ideal classes generated by primes in \begin{small}$S(L)$.\end{small} Thus, using class field theory, \begin{small}$Cl_{S(L)}(L)$\end{small} can be identified with the Galois group of the maximal abelian everywhere unramified extension of \begin{small}$L$\end{small} in which every prime of \begin{small}$S(L)$\end{small} splits completely.

Recall \begin{small}$\mu_3 \subset L$\end{small} and by Kummer theory, the maximal abelian extension of exponent $3$ of \begin{small}$L$\end{small} is given by \begin{small}$L(\{\sqrt[3]{x} \mid x \in L^*\})$.\end{small}
Also by definition, for each \begin{small}$\overline{x} \in M(S,a)$, $L(\sqrt[3]{x})/L$\end{small} is everywhere unramified. Define  \begin{small}$L_1:=L(\{ \sqrt[3]{x} \mid \overline{x} \in M(S,a) \})$\end{small} and let \begin{small}$\mathcal{Q}$\end{small} be a prime in \begin{small}$L_1$\end{small} lying above a prime \begin{small}$\mathfrak{Q} \in S(L)$.\end{small} 
Then \begin{small}$L_\mathfrak{Q}(\sqrt[3]{x})=L_\mathfrak{Q}$\end{small} for all \begin{small}$\overline{x} \in M(S,a)$.\end{small} Hence, \begin{small}$L_\mathfrak{Q}( \{ \sqrt[3]{x} \mid \overline{x} \in M(S,a) \}) = L_\mathfrak{Q}$\end{small} and  \begin{small}$(L_1)_\mathcal{Q}=L_\mathfrak{Q}$.\end{small}  This implies that \begin{small}$e(\mathcal Q/\mathfrak Q)=f(\mathcal Q/\mathfrak Q)=1$\end{small} for any \begin{small}$\mathcal{Q}$\end{small} in \begin{small}$L_1$\end{small} lying above \begin{small}$\mathfrak{Q}$.\end{small} Therefore, we conclude that \begin{small}$\mathfrak Q$\end{small} splits completely in \begin{small}$L_1$.\end{small} By using the Definition \ref{defofM1M2} of \begin{small}$M(S,a)$, $L_1$\end{small} is the maximal abelian everywhere unramified extension of \begin{small}$L$\end{small} of exponent $3$ in which every prime in \begin{small}$S(L)$\end{small} splits completely. By class field theory, \begin{small}$Gal(L_1/L) \cong Cl_{S(L)}(L) \otimes_\Z \F_3.$\end{small}
As \begin{small}$\mu_3 \subset L$,\end{small} again by Kummer theory,
\begin{small}$$M(S,a) \cong {\rm Hom}(Gal(L_1/L), \mu_3) \cong \mathrm{Hom}(Cl_{S(L)}(L)/3Cl_{S(L)}(L), \F_3).$$\end{small}
Hence, \begin{small}$\dim_{\F_3} M(S,a) = \dim_{\F_3} \mathrm{Hom}(Cl_{S(L)}(L)/3Cl_{S(L)}(L), \F_3) =h^3_{S(L)}.$\end{small}

Now suppose \begin{small}$\overline{x} \in N(S,a)$\end{small} and \begin{small}$I \subset L$\end{small} is a fractional ideal of \begin{small}$\OO_{S(L)}$\end{small} such that \begin{small}$(x)=I^3$.\end{small} Consider the map \begin{small}$\rho: N(S,a) \to Cl_{S(L)}(L)[3]$\end{small} given by \begin{small}$\overline{x} \mapsto [I]$.\end{small} Then \begin{small}$\rho$\end{small} is a well-defined surjective group homomorphism.

To see the well-definedness, let \begin{small}$\overline{y} \in N(S,a)$\end{small} be such that \begin{small}$\overline{x}= \overline{y} \in L^*/L^{*3}$.\end{small} Thus \begin{small}$y=r^3x$\end{small} for some \begin{small}$r \in L^*$.\end{small} Hence, we have \begin{small}$(y)=(r^3x)=(r^3)(x)=(r)^3I^3$.\end{small} Since \begin{small}$I$\end{small} and \begin{small}$(r)I$\end{small} lie in the same ideal class in \begin{small}$Cl_{S(L)}(L)$,\end{small} we get \begin{small}$\rho(\overline{x})=\rho(\overline{y})$.\end{small} Finally,  if there are two fractional ideals \begin{small}$I$\end{small} and \begin{small}$J$\end{small} such that \begin{small}$I^3=(x)=J^3$,\end{small} then the uniqueness of prime factorization of fractional ideals in \begin{small}$L$\end{small} gives \begin{small}$I=J$.\end{small} Thus, \begin{small}$\rho$\end{small} is well-defined. Now, \begin{small}$\rho$\end{small} is a group homomorphism is clear.

Let \begin{small}$[J]$\end{small} be an ideal class in \begin{small}$Cl_{S(L)}(L)[3]$\end{small} and \begin{small}$J$\end{small} be a fractional ideal in the class. Then \begin{small}$J^3=(y)$\end{small} for some \begin{small}$y \in L^*$,\end{small} hence \begin{small}$\overline{y} \in L^*/L^{*3}$\end{small} is an element of \begin{small}$N(S,a)$\end{small} such that \begin{small}$\rho(\overline{y}) = [J]$.\end{small} This shows that the map \begin{small}$\rho$\end{small} is surjective.

Now, we compute the kernel. Note that \begin{small}$\OO_{S(L)}^*/\OO_{S(L)}^{*3} \subset N(S,a)$.\end{small} If \begin{small}$\overline{x} \in \OO_{S(L)}^*/\OO_{S(L)}^{*3}$,\end{small} then any lift \begin{small}$x$\end{small} of \begin{small}$\bar{x}$\end{small} is a unit in \begin{small}$\OO_{S(L)}$,\end{small} hence \begin{small}$(x)=\OO_{S(L)} = [ 1 ] \in Cl_{S(L)}(L)$\end{small} and so \begin{small}$\overline{x} \in \mbox{Ker}(\rho)$,\end{small} thus \begin{small}$\mbox{Ker}(\rho) \supset \OO_{S(L)}^*/\OO_{S(L)}^{*3}$.\end{small}
Conversely, if \begin{small}$\overline{x} \in \mbox{Ker}(\rho)$,\end{small} then we have the equality of \begin{small}$\OO_{S(L)}$\end{small}-fractional ideals, \begin{small}$(x)=(y)^3$,\end{small} for some \begin{small}$y \in L^*$.\end{small} Thus, there exists some \begin{small}$s \in \OO_{S(L)}^*$\end{small} such that \begin{small}$x=sy^3$,\end{small} hence, \begin{small}$\overline{x}= \overline{s}$\end{small} in \begin{small}$L^*/L^{*3}$\end{small} and as \begin{small}$s \in \OO_{S(L)}^*$,\end{small} we can view \begin{small}$\overline{s}$\end{small} in  \begin{small}$\OO_{S(L)}^*/\OO_{S(L)}^{*3}$.\end{small} This shows that \begin{small}$\mbox{Ker}(\rho) = \OO_{S(L)}^*/\OO_{S(L)}^{*3}$.\end{small}

By Dirichlet's \begin{small}$S$\end{small}-units theorem \cite[Cor.~3.7.1, \S1]{gras}, \begin{small}$\dim_{\F_3} \OO_{S(L)}^*/\OO_{S(L)}^{*3}=|S(L)|+2.$\end{small}
Hence, we deduce \begin{small}$ \dim_{\F_3} N(S,a) = \dim_{\F_3} Cl_{S(L)}(L)[3] + \dim_{\F_3} \frac{\OO_{S(L)}^*}{\OO_{S(L)}^{*3}}= h^3_{S(L)}+ |S(L)|+2.$\end{small}

When \begin{small}$a \in K^{*2}$,\end{small} we can see that \begin{small}$N'(S,a) \cong  \Big(\frac{\OO_S^*}{\OO_S^{*3}} \times \frac{\OO_S^*}{\OO_S^{*3}}\Big)_{N=1}$.\end{small} Thus we obtain \begin{small}$\dim_{\F_3} N'(S,a) = |S|+1$,\end{small} by Dirichlet's \begin{small}$S$\end{small}-units theorem \cite[Cor.~3.7.1, \S1]{gras}. 
\end{proof}

We give an alternative (equivalent) description of \begin{small}$N(S,a)$, $N'(S,a)$\end{small} and \begin{small}$M(S,a)$,\end{small} which fits well with the definition of Selmer groups given in \eqref{eq:newseldef} and \eqref{eq:newseldefsq} and will be used later to give lower and upper bounds on them.
\begin{proposition}\label{newM1M2sel}
When \begin{small}$a \notin K^{*2}$,\end{small} we have
\begin{small}
$$M(S,a)=\{ \overline{x} \in \Big(\frac{L^*}{L^{*3}}\Big)_{N=1} \mid \overline{x} \in \Big(\frac{A_\q^*}{A_\q^{*3}}\Big)_{N=1} \text{ if } \q \notin S \cup \{\p\}, \text{ } \overline{x}=\overline{1} \text{ if } \q \in S \text{ and } \overline{x} \in \frac{V_3}{A_\p^{*3}} \text{ if } \p \notin S \},$$
$$N(S,a)=\{ \overline{x} \in \Big(\frac{L^*}{L^{*3}}\Big)_{N=1} \mid \overline{x} \in \Big(\frac{A_\q^*}{A_\q^{*3}}\Big)_{N=1} \text{ for all } \q \notin S \}.$$
\end{small}
On the other hand, for \begin{small}$a \in K^{*2}$,\end{small} we have \begin{small}$$N'(S,a)=\{ (\overline{x}_1,\overline{x}_2) \in \Big(\frac{K^*}{K^{*3}} \times \frac{K^*}{K^{*3}}\Big)_{N=1} \mid (\overline{x}_1,\overline{x}_2) \in \Big(\frac{A_\q^*}{A_\q^{*3}}\Big)_{N=1} \text{ for all } \q \notin S \}.$$\end{small}
\end{proposition}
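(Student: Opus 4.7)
The plan is to translate the defining conditions of \begin{small}$M(S,a)$, $N(S,a)$\end{small} and \begin{small}$N'(S,a)$\end{small} into local conditions at each prime $\q$ of $K$, and then recognise these as exactly the conditions appearing in the claimed descriptions. Two ingredients will drive the argument: Lemma \ref{norm1mult3}, which converts valuation-divisibility into membership in \begin{small}$(A_\q^*/A_\q^{*3})_{N=1}$,\end{small} and the characterisation of unramified cubic Kummer extensions. The main technical hurdle will lie at the wild prime \begin{small}$\p \mid 3$,\end{small} where unramifiedness is not captured by the valuation alone.

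First I would dispatch \begin{small}$N(S,a)$\end{small} and \begin{small}$N'(S,a)$.\end{small} For \begin{small}$N(S,a)$,\end{small} the condition \begin{small}$(x)=I^3$\end{small} for some fractional ideal $I$ of \begin{small}$\OO_{S(L)}$\end{small} is equivalent to \begin{small}$\upsilon_\mathfrak{Q}(x) \equiv 0 \pmod 3$\end{small} for every prime \begin{small}$\mathfrak{Q}$\end{small} of $L$ not above $S$. Since \begin{small}$N(S,a) \subseteq (L^*/L^{*3})_{N=1}$\end{small} (as noted right after Definition \ref{defofM1M2}), invoking Lemma \ref{norm1mult3} at each \begin{small}$\q \notin S$\end{small} yields the claimed alternative description. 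A parallel argument via Lemma \ref{norm1mult3} applied coordinatewise in the split case immediately handles \begin{small}$N'(S,a)$.\end{small}

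For \begin{small}$M(S,a)$,\end{small} the inclusion \begin{small}$M(S,a) \subseteq N(S,a) \subseteq (L^*/L^{*3})_{N=1}$\end{small} makes the norm-one condition automatic. The clause \begin{small}$x \in L_\q^{*3}$\end{small} for \begin{small}$\q \in S$\end{small} is literally \begin{small}$\overline{x}=\overline{1}$\end{small} locally. At a tame prime \begin{small}$\q \notin S \cup \{\p\}$,\end{small} unramifiedness of the Kummer extension \begin{small}$L_\mathfrak{Q}(\sqrt[3]{x})/L_\mathfrak{Q}$\end{small} is equivalent to \begin{small}$\upsilon_\mathfrak{Q}(x) \equiv 0 \pmod 3$,\end{small} and Lemma \ref{norm1mult3} then gives \begin{small}$\overline{x} \in (A_\q^*/A_\q^{*3})_{N=1}$.\end{small}

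The hard part will be the wild prime \begin{small}$\p \mid 3$\end{small} (assuming \begin{small}$\p \notin S$).\end{small} Unramifiedness of \begin{small}$L(\sqrt[3]{x})/L$\end{small} at $\p$ first forces \begin{small}$\upsilon_\mathfrak{P}(x) \equiv 0 \pmod 3$,\end{small} so $\overline{x}$ may be represented by a local unit, and then additionally requires the class of this unit to lie in \begin{small}$V_{L_\mathfrak{P}}/\OO_{L_\mathfrak{P}}^{*3}$\end{small} (respectively \begin{small}$V_{K_\p}/\OO_{K_\p}^{*3} \times V_{K_\p}/\OO_{K_\p}^{*3}$\end{small} in the split case). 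My plan is to identify the unramified Kummer subgroup of \begin{small}$\OO_F^*/\OO_F^{*3}$\end{small} (for $F=L_\mathfrak{P}$ or $K_\p$) with \begin{small}$V_F/\OO_F^{*3}$:\end{small} since \begin{small}$\mu_3 \subset F$,\end{small} the unique unramified cubic extension of $F$ corresponds to a subgroup of order $3$ in \begin{small}$\OO_F^*/\OO_F^{*3}$,\end{small} and a standard local computation (in the spirit of Prop.~\ref{U3cubes}) shows that \begin{small}$F(\sqrt[3]{u})/F$\end{small} is unramified for every \begin{small}$u \in U^3_F$,\end{small} so that \begin{small}$V_F/\OO_F^{*3}$\end{small} lies inside the unramified Kummer subgroup; equality then follows from the size count in Lemma \ref{cubesinVF}. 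Intersecting with the norm-one condition finally yields \begin{small}$V_3/A_\p^{*3}$\end{small} by Definition \ref{defnofV3}, completing the description of \begin{small}$M(S,a)$.\end{small}
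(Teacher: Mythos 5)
Your proposal is correct and follows essentially the same route as the paper's proof: both translate the defining conditions prime by prime, using Lemma \ref{norm1mult3} away from $3$ (and for $N(S,a)$, $N'(S,a)$), taking the clause $x \in L_\q^{*3}$ verbatim at $\q \in S$, and reducing the wild prime to the identification of the unramified cubic Kummer classes of local units with $V_F/\OO_F^{*3}$, intersected with the norm-one condition to land in $V_3/A_\p^{*3}$. The only divergence is that at $\p$ the paper simply cites \cite[Theorem~6.3, Ch.~1]{gras} for this identification, whereas you re-derive it directly (the unramified classes form an order-$3$ subgroup by Kummer theory, a Hensel-type computation shows $U^3_F$ gives unramified extensions, and equality follows from the count in Lemma \ref{cubesinVF}), which is a valid, self-contained substitute.
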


\begin{proof}
We take  \begin{small}$\overline{x} \in \big(L^*/L^{*3}\big)_{N=1}$.\end{small} Then by Lemma \ref{norm1mult3}, \begin{small}$\overline{x} \in \big(A_\q^*/A_\q^{*3}\big)_{N=1} \Leftrightarrow \upsilon_\mathfrak{q}(\overline{x}) \equiv 0 \pmod 3$.\end{small} Hence, \begin{small}$(x)=I^3$\end{small} for some fractional ideal \begin{small}$I$\end{small} of \begin{small}$\OO_{S(L)}$\end{small} if and only if \begin{small}$\overline{x} \in \big(A_\q^*/A_\q^{*3}\big)_{N=1}$\end{small} for all \begin{small}$\q \notin S$.\end{small} Thus, the two definitions of \begin{small}$N(S,a)$\end{small} in the Definition \ref{defofM1M2} and here are equivalent.

On the other hand, let \begin{small}$\overline{x} \in M(S,a) \subset \big(L^*/L^{*3}\big)_{N=1}$.\end{small} To show the equivalence of two definitions of \begin{small}$M(S,a)$\end{small} in the Definition \ref{defofM1M2} and in this Proposition, we start with a prime \begin{small}$\q \notin S \cup \{\p\}$.\end{small}  By \cite[Theorem~6.3(i), Ch.~1]{gras}, \begin{small}$L(\sqrt[3]{x})/L$\end{small} is unramified at a prime \begin{small}$\mathfrak{Q}$\end{small} dividing \begin{small}$\q$\end{small} if and only if \begin{small}$\upsilon_\mathfrak{Q}(x) \equiv 0 \pmod 3$.\end{small} By Lemma \ref{norm1mult3}, this is equivalent to  \begin{small}$\overline{x} \in \big(A_\q^*/A_\q^{*3}\big)_{N=1}$.\end{small}

Next consider \begin{small}$\q \in S$\end{small} (includes \begin{small}$\q=\p$\end{small}). Then by the Definition \ref{defofM1M2}, \begin{small}$x \in L_\q^{*3}$\end{small} or equivalently, \begin{small}$\overline{x}=\overline{1} \in (L_{\mathfrak{q}}^*/{L_{\mathfrak{q}}^{*3}})_{N=1}$.\end{small}

Finally, we consider the prime \begin{small}$\p$\end{small} where \begin{small}$\p \notin S.$\end{small} Then \begin{small}$L(\sqrt[3]{x})/L$\end{small} is unramified at any prime \begin{small}$\mathfrak{P} \mid \p$,\end{small} by the Definition \ref{defofM1M2}. Then by \cite[Theorem~6.3(ii), Ch.~1]{gras}, we obtain that \begin{small}$x \in V_{L_\mathfrak{P}}$\end{small} for any lift \begin{small}$x \in L^*$\end{small} of \begin{small}$\overline{x}$\end{small}. In particular, \begin{small}$\upsilon_\mathfrak{P}(\overline{x}) \equiv 0 \pmod 3$.\end{small} Hence, by Lemma \ref{norm1mult3}, \begin{small}$N_{L_\mathfrak{P}/K_\p}(x) \in \OO_{K_\p}^{*3}$.\end{small} Consequently, from the Definition \ref{defnofV3}, \begin{small}$\overline{x} \in V_3/A_\p^{*3}$.\end{small}
Conversely, let \begin{small}$\p \notin S$\end{small} and \begin{small}$\overline{x} \in V_3/A_\p^{*3}$.\end{small} Then again by \cite[Theorem~6.3(ii), Ch.~1]{gras}, \begin{small}$L(\sqrt[3]{x})/L$\end{small} is unramified at every prime \begin{small}$\mathfrak{P} \mid \p$.\end{small}
The result follows from these discussions.

\noindent The alternate description of \begin{small}$N'(S,a)$\end{small} can be obtained from Lemma \ref{norm1mult3} in a similar way.
\end{proof}

\section{$3$-Selmer Group}\label{type1curves}
 
\subsection{Local Theory}\label{type1theory}
In this subsection,  we give an explicit description of the image of the Kummer map \begin{small}$\delta_{\phi,K_\q}$\end{small}  for \begin{small}$E_a$\end{small} for all \begin{small}$\q$.\end{small} We start with a general result.

\begin{lemma}\label{goodreduction}
Let \begin{small}$\varphi: E \to \widehat{E}$\end{small} be a $3$-isogeny defined over \begin{small}$K$.\end{small} Recall from \S \ref{iso} that \begin{small}$\delta_{\varphi,K_\q}(E(K_\q)) \subset (L_\q^*/L_\q^{*3})_{N=1}$.\end{small}
If \begin{small}$E$\end{small} (and hence \begin{small}$\widehat{E}$\end{small}) has good reduction at a prime \begin{small}$\q \nmid 3$\end{small} in \begin{small}$K$,\end{small} then for any \begin{small}$P \in \widehat{E}(K_\q)$, \ $v_\q(\delta_{\varphi,K_\q}(P)) \equiv 0 \pmod 3$.\end{small} In particular, \begin{small}$\delta_{\varphi,K_\q}(\widehat{E}(K_\q)) \subset \big(A_\q^*/A_\q^{*3}\big)_{N=1}$.\end{small}
\end{lemma}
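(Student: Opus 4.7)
My approach is to show first that the Kummer class $\delta_{\varphi,K_\q}(P)$ is \emph{unramified}, then use the isomorphism of Proposition~\ref{cohom} to identify unramified classes with unit classes, and finally invoke Lemma~\ref{norm1mult3} to translate this into the valuation statement.

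\emph{Step 1 (Unramifiedness of the Kummer image).} Since $E$ (hence $\widehat{E}$) has good reduction at $\q$ and $\q \nmid 3 = \deg\varphi$, the N\'eron-Ogg-Shafarevich criterion applied to $E[3] \supset E[\varphi]$ shows that the inertia subgroup $I_\q \subset G_{K_\q}$ acts trivially on $E[\varphi]$; equivalently, $E[\varphi]$ extends to an \'etale finite flat group scheme over $\OO_{K_\q}$ and $\varphi$ extends to an \'etale isogeny of N\'eron models. In particular, the induced map $\varphi\colon E(K_\q^{\mathrm{ur}}) \to \widehat{E}(K_\q^{\mathrm{ur}})$ is surjective. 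Given $P \in \widehat{E}(K_\q) \subset \widehat{E}(K_\q^{\mathrm{ur}})$, I pick any $Q \in E(\overline{K}_\q)$ with $\varphi(Q) = P$; the surjectivity just noted forces $Q \in E(K_\q^{\mathrm{ur}})$, so the defining cocycle $\sigma \mapsto \sigma Q - Q$ of $\delta_{\varphi,K_\q}(P)$ is trivial on $I_\q$. Thus $\delta_{\varphi,K_\q}(P) \in H^1_{\mathrm{ur}}(G_{K_\q}, E[\varphi]) := H^1(G_{K_\q}/I_\q, E[\varphi])$.

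\emph{Step 2 (Identification of the unramified part).} The isomorphism $H^1(G_{K_\q}, E[\varphi]) \cong (L_\q^*/L_\q^{*3})_{N=1}$ of Proposition~\ref{cohom} is obtained by combining Shapiro's lemma with Kummer theory for $\mu_3$ over $L_\q$ (applied factor-wise when $L_\q \cong K_\q \times K_\q$). Since $\q \nmid 3$, Kummer theory identifies unramified cohomology of $\mu_3$ with the unit-class subgroup $\OO_{L_\mathfrak{Q}}^*/\OO_{L_\mathfrak{Q}}^{*3}$ (resp.\ the product analog), and the norm-one conditions on the two sides of the isomorphism match. Consequently $H^1_{\mathrm{ur}}(G_{K_\q}, E[\varphi])$ corresponds exactly to $(A_\q^*/A_\q^{*3})_{N=1} \subset (L_\q^*/L_\q^{*3})_{N=1}$.

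Combining Steps~1 and~2, $\delta_{\varphi,K_\q}(P) \in (A_\q^*/A_\q^{*3})_{N=1}$, which by Lemma~\ref{norm1mult3} is equivalent to $v_\q(\delta_{\varphi,K_\q}(P)) \equiv 0 \pmod 3$, yielding both assertions. The main delicate point is the \'etaleness argument in Step~1, where both hypotheses (good reduction of $E$ and $\q \nmid 3$) are essential to guarantee surjectivity of $\varphi$ on $K_\q^{\mathrm{ur}}$-points; the remaining steps are essentially formal, given the identifications already recorded in \S\ref{iso} and \S\ref{ant}.
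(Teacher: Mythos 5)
Your proposal is correct and takes essentially the same route as the paper: establish that the Kummer class is unramified, then identify the unramified classes inside $(L_\q^*/L_\q^{*3})_{N=1}$ with $(A_\q^*/A_\q^{*3})_{N=1}$, i.e.\ with the classes of valuation $\equiv 0 \pmod 3$, via Lemma \ref{norm1mult3}. The only cosmetic differences are that the paper outsources your Step 1 to \cite[Lemma~3.1]{ss} and, instead of invoking unramified cohomology of $\mu_3$, uses the explicit criterion that $L_\q(\sqrt[3]{x})/L_\q$ is unramified if and only if $v_\q(x)\equiv 0 \pmod 3$ (for $\q \nmid 3$), which you replace by a direct N\'eron-model/\'etaleness argument of comparable rigor.
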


\begin{proof}
Since \begin{small}$E$\end{small} has good reduction at \begin{small}$\q \nmid 3$,\end{small} for \begin{small}$P \in {\widehat{E}(K_\q)}$,\end{small} we have \begin{small}$\delta_{\varphi,K_\q}(P) \in \mbox{Ker}\big(H^1(G_{K_\q},E[\varphi]) \overset{\mbox{res}}{\longrightarrow} H^1(G_{{K}_\q^{unr}},E[\varphi])\big)$,\end{small} by \cite[Lemma~3.1]{ss}, where \begin{small}$K_\q^{unr}$\end{small} is the maximal unramified extension of \begin{small}$K_\q$.\end{small} By Prop. \ref{cohom}, we have \begin{small}$H^1(G_{K_\q},E[\varphi]) \cong (L_\q^*/L_\q^{*3})_{N=1}$\end{small} and \begin{small}$H^1(G_{{K}_\q^{unr}},E[\varphi]) \cong \big((L_\q^{unr})^*/(L_\q^{unr})^{*3}\big)_{N=1}$.\end{small} Via these isomorphisms, we identify \begin{small}$H^1(G_{K_\q},E[\varphi])$\end{small} and \begin{small}$H^1(G_{{K}_\q^{unr}}, E[\varphi])$\end{small} with their respective images in \begin{small}$L_\q^*/L_\q^{*3}$\end{small} and \begin{small}$(L_\q^{unr})^*/(L_\q^{unr})^{*3}$.\end{small} Then we see that, if \begin{small}$P \in {\widehat{E}(K_\q)}$,\end{small} then \begin{small}$\delta_{\varphi,K_\q}(P) \in \mbox{Ker}\big(L_\q^*/L_\q^{*3} \overset{\mbox{inf}}{\longrightarrow} (L_\q^{unr})^*/(L_\q^{unr})^{*3}\big)$,\end{small} where  `inf' is induced by the canonical injection \begin{small}$L_\q \hookrightarrow L_\q^{unr}$.\end{small}

Now for any \begin{small}$\overline{x} \in L_\q^*/L_\q^{*3}$\end{small} and its lift \begin{small}$x \in L_\q^*$,\end{small} we have \begin{small}$\overline{x} \in \mbox{Ker}(\mbox{inf}) \Leftrightarrow x \in L_\q^* \cap {(L_\q^{unr})}^{*3} \Leftrightarrow L_\q(\sqrt[3]{x})/L_\q$\end{small} is unramified \begin{small}$\Leftrightarrow v_\q(x) \equiv 0 \pmod 3$.\end{small} The last implications follow from \cite[Theorem~6.3(i), \S1]{gras} and 
thus, \begin{small}$v_\q(\delta_{\varphi,K_\q}(P)) \equiv 0 \pmod 3$.\end{small} That \begin{small}$\delta_{\varphi,K_\q}(\widehat{E}(K_\q)) \subset \big(A_\q^*/A_\q^{*3}\big)_{N=1}$,\end{small} follows from Lemma \ref{norm1mult3}.
\end{proof}

\begin{rem}
Assume that \begin{small}$\q \nmid 3$\end{small} and the Tamagawa numbers \begin{small}$c_\q(E)$, $c_\q(\widehat{E})$\end{small} are not divisible by $3$. Then the image of the Kummer map \begin{small}$\delta_{\varphi,K_\q}(\widehat{E}(K_\q))$\end{small} in \begin{small}$H^1(G_{K_\q},E[\varphi])$\end{small} is equal to the unramified subgroup
\begin{small}$H^1_{unr}(G_{K_\q},E[\varphi])=\mbox{Ker}\big(H^1(G_{K_\q},E[\varphi]) \to H^1(I_\q,E[\varphi])\big)$\end{small} (see \cite[Lemma~4.5]{ss}). 
\end{rem}

For \begin{small}$P \in \widehat{E}(K_\q)$\end{small}, \begin{small}$\delta_{{\varphi},K_\q}(P) = \overline{t} \in L_\q^*/L_\q^{*3}$\end{small}, if \begin{small}$L_\q$\end{small} is a field and \begin{small}$\delta_{{\varphi},K_\q}(P)=(\overline{t_1},\overline{t_2})$,\end{small} if \begin{small}$L_\q \cong K_\q \times K_\q$.\end{small}  
To ease the notation, we write \begin{small}$\delta_{{\varphi},K_\q}(P)=t$\end{small} or \begin{small}$(t_1,t_2)$,\end{small} respectively, in \S\ref{type1theory}. 
The following construction  follows Cassels \cite[\S14, \S15]{cass} and the proofs are omitted.
\begin{proposition}\label{type1}
Let \begin{small}$\phi: E_a \to {E}_a$\end{small} be the \begin{small}$K$\end{small}-isogeny defined in \eqref{eq:defofphi}. We have the Kummer map \begin{small}$\delta_{\phi,K_\q}: E_a(K_\q) \to  (L_\q^*/L_\q^{*3})_{N=1}$\end{small} with \begin{small}$\Ker(\delta_{\phi,K_\q}) = {\phi}(E_a(K_\q))$.\end{small} Let \begin{small}$P=(x(P),y(P)) \in E_a(K_\q)$.\end{small}
\begin{small}\\
$\text{If } L_\q \text{ is a field, then } \delta_{{\phi},K_\q} \text{ is given by }
\delta_{{\phi},K_\q}(P) = \begin{cases}
1,               & \text{ if } P = O,\\
y(P) - \sqrt{a}, & \text{ otherwise.}
\end{cases}$\\
$\text{If } L_\q \cong K_\q \times K_\q, \text{ then } \delta_{{\phi},K_\q} \text{ is given by } 
\delta_{{\phi},K_\q}(P) = \begin{cases}
(1,1),                                  & \text{if } P = O,\\
(\frac{1}{2 \sqrt{a} }, \ 2\sqrt{a}),     & \text{if } P = (0, \sqrt{a}),\\
(-2{\sqrt{a}}, \ -\frac{1}{2 {\sqrt{a}}}),    & \text{if } P = (0, -\sqrt{a}),\\
({y(P) - \sqrt{a}}, \ {y(P) + \sqrt{a}}),     & \text{if } P \notin E_a[{\phi}](K_\q). \qed
\end{cases}$\end{small}
\end{proposition}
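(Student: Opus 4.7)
\medskip

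\noindent\textbf{Proof plan.} My plan is to derive the formula from the general isogeny-descent framework, following Cassels \cite[\S14,\S15]{cass}, and then reconcile the resulting expression with the identification of $H^1(G_{K_\q}, E_a[\phi])$ with $(L_\q^*/L_\q^{*3})_{N=1}$ supplied by Proposition \ref{cohom}. The connecting map $\delta_{\phi,K_\q}$ comes from the short exact sequence
\[
0 \longrightarrow E_a[\phi] \longrightarrow E_a \stackrel{\phi}{\longrightarrow} E_a \longrightarrow 0
\]
of $G_{K_\q}$-modules, and the statement $\Ker(\delta_{\phi,K_\q}) = \phi(E_a(K_\q))$ is just exactness of cohomology. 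The content is therefore the explicit formula, and the proof reduces to identifying which function on $E_a$ represents the Kummer image.

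\smallskip

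\noindent The first step is to observe that on $E_a$ over $\overline{K_\q}$ the rational function $f(P) := y(P)-\sqrt{a}$ has divisor $3(T)-3(O)$, where $T=(0,\sqrt{a})$ is a generator of $E_a[\phi]$; this is immediate from $(y-\sqrt{a})(y+\sqrt{a}) = y^2-a = x^3$ together with the fact that $x$ has a simple zero at $T$ and a double pole at $O$. Hence $\phi^*f = g^3$ for some $g \in \overline{K_\q}(E_a)$, so $f$ is a Cassels descent function for $\phi$. Pairing with a generator of $\ker\widehat{\phi}$ via the Weil pairing identifies $E_a[\phi] \cong \mu_3$, and under the resulting identification $H^1(G_{K_\q},E_a[\phi]) \cong H^1(G_{K_\q},\mu_3) \cong K_\q^*/K_\q^{*3}$ (or the $L_\q$-version after Shapiro/Proposition \ref{cohom}) the standard cocycle computation $\sigma \mapsto \sigma \tilde P - \tilde P$ for $\tilde P$ a $\phi$-preimage of $P$ translates into $P \mapsto f(P) = y(P)-\sqrt{a}$ in $L_\q^*/L_\q^{*3}$.

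\smallskip

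\noindent The next step is to verify the formula in each case. For $P=O$ we get $\delta_{\phi,K_\q}(O)=1$ trivially. For $P \notin E_a[\phi](K_\q)$ we have $y(P)-\sqrt{a} \neq 0$ in $L_\q$, giving the stated formula directly; when $L_\q \cong K_\q \times K_\q$ via $\sqrt{a} \mapsto (+\sqrt{a},-\sqrt{a})$, the image is $(y(P)-\sqrt{a},\,y(P)+\sqrt{a})$. The norm-$1$ condition is automatic because
\[
N_{L_\q/K_\q}(y(P)-\sqrt{a}) = (y(P)-\sqrt{a})(y(P)+\sqrt{a}) = x(P)^3 \in K_\q^{*3}.
\]
The only delicate case, and the main obstacle of the argument, is when $P \in E_a[\phi](K_\q) \setminus \{O\}$, which occurs only in the split case $a \in K_\q^{*2}$: then $f(P)=0$ is not a legitimate representative in $L_\q^*$, so one has to replace $f$ inside its cube class by something non-vanishing at $P$. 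The relation $(y-\sqrt{a})(y+\sqrt{a}) = x^3$ shows that in $L_\q^*/L_\q^{*3}$ one has $y-\sqrt{a} \equiv (y+\sqrt{a})^{-1}$ away from the zero locus, so at the two $\phi$-torsion points one computes each coordinate of $L_\q \cong K_\q \times K_\q$ using whichever of $y \pm \sqrt{a}$ is non-zero. At $P=(0,\sqrt{a})$ this yields $\bigl(\tfrac{1}{2\sqrt{a}},\,2\sqrt{a}\bigr)$ and at $P=(0,-\sqrt{a})$ it yields $\bigl(-2\sqrt{a},\,-\tfrac{1}{2\sqrt{a}}\bigr)$, as claimed; in both cases the product of the two coordinates is $1$, confirming norm-triviality. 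Since these exhaust $E_a(K_\q)$ up to cases already handled, the description of $\delta_{\phi,K_\q}$ is complete.
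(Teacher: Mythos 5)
Your proposal is correct and is essentially the argument the paper relies on: the paper omits the proof and cites Cassels \cite[\S14,\S15]{cass}, and your write-up reproduces exactly that construction — the function $y-\sqrt{a}$ with divisor $3(T)-3(O)$ attached to the kernel of the dual isogeny, the Weil-pairing (étale-algebra) identification of $H^1(G_{K_\q},E_a[\phi])$ with $(L_\q^*/L_\q^{*3})_{N=1}$ as in Proposition \ref{cohom}, the norm condition via $N_{L_\q/K_\q}(y-\sqrt{a})=x^3$, and the substitution $y-\sqrt{a}\equiv (y+\sqrt{a})^{-1}$ at the kernel points, which yields precisely the listed values $\bigl(\tfrac{1}{2\sqrt{a}},2\sqrt{a}\bigr)$ and $\bigl(-2\sqrt{a},-\tfrac{1}{2\sqrt{a}}\bigr)$. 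The only steps left implicit (that $\phi^*f$ is a cube up to a constant absorbable over $\overline{K_\q}$, and the descent to the non-split case via the $L_\q$-version) are standard and consistent with the framework the paper cites.
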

Let \begin{small}$F$\end{small} be a local field of characteristic $0$ and \begin{small}$\omega$\end{small} be a uniformizer of  \begin{small}$\OO_F$\end{small}. Suppose \begin{small}$\varphi: E \to \widehat{E}$\end{small} be an isogeny of elliptic curves and \begin{small}$c_\omega(E)$\end{small} and \begin{small}$c_\omega(\widehat{E})$\end{small} denote the Tamagawa numbers of \begin{small}$E$\end{small} and \begin{small}$\widehat{E}$\end{small} at  \begin{small}$\omega$\end{small}. By a theorem of Schaefer \cite[Lemma~3.8]{sch}, we know that
\begin{small}\begin{equation}\label{eq:schaeferfor}
|\widehat{E}(F)/\varphi(E(F))|  = \frac{||\varphi'(0)||_\omega^{-1} \times |E(F)[\varphi]| \times c_\omega(\widehat{E}) }{c_\omega(E)},
\end{equation}\end{small}
where \begin{small}$||.||_\omega$\end{small} is the norm in \begin{small}$F$\end{small} and \begin{small}$\varphi'(0)$\end{small} is the leading coefficient of the power series representation of \begin{small}$\varphi$\end{small} on the formal group of \begin{small}$\widehat{E}$\end{small} \cite[\S4]{sil1}.
Using \eqref{eq:schaeferfor}, we will compute \begin{small}$|\widehat{E}(K_\q)/\varphi(E(K_\q))|$\end{small} for all \begin{small}$\q$.\end{small}

\subsubsection{\bf {Local theory at primes $\q \nmid 3$}}
In this subsection, we assume that \begin{small}$\q$\end{small} is a prime of \begin{small}$K=\Q({\zeta})$\end{small} not dividing $3$.
\begin{lemma}\label{charnot3}
For  \begin{small}$\q \nmid 3$,\end{small}  we have 
\begin{small}$\Big|\frac{E_a(K_\q)}{\phi(E_a(K_\q))}\Big|  = |E_a[\phi](K_q)| = \begin{cases}
1, & \text{ if } a \notin K_\q^{*2} \\
3, & \text{ if } a \in K_\q^{*2}. \end{cases} $\end{small}
\end{lemma}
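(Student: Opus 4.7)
The plan is to apply Schaefer's formula \eqref{eq:schaeferfor} to the $K$-rational $3$-isogeny $\phi\colon E_a\to E_a$ of \eqref{eq:defofphi}, and then read off $|E_a[\phi](K_\q)|$ by direct inspection. Because the source and target of $\phi$ coincide as elliptic curves over $K$, the Tamagawa ratio $c(\widehat{E})/c(E)$ in \eqref{eq:schaeferfor} is $1$, and the formula reduces to
\[
\Big|\frac{E_a(K_\q)}{\phi(E_a(K_\q))}\Big| \;=\; \|\phi'(0)\|_\q^{-1}\cdot |E_a[\phi](K_\q)|.
\]
So to prove the first equality of the lemma it suffices to show that $\|\phi'(0)\|_\q=1$ whenever $\q\nmid 3$.

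This I would do by expanding $\phi$ on the formal group of $E_a$ at the origin with respect to the standard uniformizer $t=-x/y$. A point near $O$ satisfies $x\sim t^{-2}$ and $y\sim -t^{-3}$ to leading order; substituting these into the formulas in \eqref{eq:defofphi} gives $x(\phi P)\sim \p^{-2}t^{-2}$ and $y(\phi P)\sim -\p^{-3}t^{-3}$, so the parameter of the image is $t(\phi P)=-x(\phi P)/y(\phi P)=\p\, t + O(t^2)$. Hence $\phi'(0)$ is an associate of $\p=1-\zeta$, and since $\p$ lies above $3$ in $K$ it is a unit in $\OO_{K_\q}$ for every prime $\q\nmid 3$. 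Consequently $\|\phi'(0)\|_\q=1$, which yields the first equality.

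For the second equality, $E_a[\phi]=\{O,(0,\sqrt{a}),(0,-\sqrt{a})\}$, and a non-identity kernel point is $K_\q$-rational if and only if $\sqrt{a}\in K_\q$, i.e.\ if and only if $a\in K_\q^{*2}$ (using $a\neq 0$ and that $a$ is sixth-power free in $K$). This immediately gives $|E_a[\phi](K_\q)|=3$ when $a\in K_\q^{*2}$ and $=1$ otherwise.

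The only mildly technical step is the formal-group calculation of $\phi'(0)$; everything else is direct. Note that the hypothesis $\q\nmid 3$ enters exactly so that the $\p$ in $\phi'(0)$ is a $\q$-adic unit — at the prime above $3$ one instead picks up $\|\p\|_\p^{-1}=3$, which is why that case will require a separate treatment in the ensuing subsection.
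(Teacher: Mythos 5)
Your proposal is correct and follows essentially the same route as the paper: Schaefer's formula \eqref{eq:schaeferfor} with the Tamagawa factors cancelling (same source and target), the fact that $\|\phi'(0)\|_\q=1$ for $\q\nmid 3$, and the observation that $|E_a[\phi](K_\q)|$ is $3$ or $1$ according as $a\in K_\q^{*2}$ or not. The only cosmetic difference is that you justify $\|\phi'(0)\|_\q=1$ by computing $\phi'(0)=\p$ on the formal group (the same computation the paper carries out in \eqref{eq:formlgrp} for the prime above $3$), whereas the paper simply cites Schaefer's remark that $\phi'(0)$ is a unit at primes not dividing the degree of the isogeny.
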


\begin{proof}
We have \begin{small}$|E_a(K_\q)/\phi(E_a(K_\q))| = ||\phi'(0)||_\q^{-1} \times |E_a[\phi](K_\q)|$\end{small} (by \eqref{eq:schaeferfor}).
Note that \begin{small}$||\phi'(0)||_\q=1$\end{small} (\cite[Comment after Lemma~3.8]{sch}) for all \begin{small}$\q$\end{small} which do not divide the degree of the isogeny \begin{small}$\phi$.\end{small} Also, it is easy to see that\\
\begin{small}$|E_a[\phi](K_\q)| = \begin{cases} 1, & \text{ if } a \notin K_\q^{*2}\\ 3, & \text{ if } a \in K_\q^{*2}. \end{cases}$\end{small}
Hence the claim follows.
\end{proof}
\begin{corollary}\label{kumcharnot3}
When \begin{small}$a \notin K_\q^{*2}$,\end{small} it follows from Prop. \ref{N1subgrp} and Lemma \ref{charnot3} that \begin{small}$\delta_{\phi, K_\q}(E_a(K_\q)) \cong \frac{E_a(K_\q)}{\phi(E_a(K_\q))} \cong (A_\q^*/A_\q^{*3})_{N=1} = \{1\}.$\end{small}
\qed
\end{corollary}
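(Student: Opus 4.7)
The plan is to assemble the corollary directly from the two cited facts by passing through the Kummer map viewed inside $(L_\q^*/L_\q^{*3})_{N=1}$ via Proposition \ref{cohom}.

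First I would invoke the first isomorphism theorem for $\delta_{\phi,K_\q}$. Since $\ker(\delta_{\phi,K_\q}) = \phi(E_a(K_\q))$ by the construction of the Kummer map, this immediately yields the leftmost isomorphism
\begin{small}
$$\delta_{\phi,K_\q}(E_a(K_\q)) \;\cong\; E_a(K_\q)/\phi(E_a(K_\q)),$$
\end{small}
with no input beyond the definition. Next, specialising Lemma \ref{charnot3} to $a\notin K_\q^{*2}$ (with $\q\nmid 3$) gives $|E_a(K_\q)/\phi(E_a(K_\q))| = 1$, so $\delta_{\phi,K_\q}(E_a(K_\q))$ is the trivial subgroup of $(L_\q^*/L_\q^{*3})_{N=1}$. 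Separately, Proposition \ref{N1subgrp} under the same hypothesis gives $|(A_\q^*/A_\q^{*3})_{N=1}| = 1$, so this subgroup of $(L_\q^*/L_\q^{*3})_{N=1}$ is also trivial. Both being trivial, they coincide inside the common ambient group, establishing the remaining equalities in the chain.

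There is essentially no obstacle; the content is entirely contained in the two prerequisites and the corollary is a clean repackaging of them. The only mild point worth highlighting is that one does \emph{not} need Lemma \ref{goodreduction} to obtain the containment $\delta_{\phi,K_\q}(E_a(K_\q)) \subset (A_\q^*/A_\q^{*3})_{N=1}$ for all $\q\nmid 3$ (regardless of reduction type): since the order count already forces the image to be trivial, the containment is automatic. Also, the middle isomorphism in the displayed chain should be read as the tautological isomorphism between two groups of order one rather than as a canonical structural map.
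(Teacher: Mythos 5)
Your argument is correct and is exactly the paper's intended one: the corollary is stated with only the citations to Prop.~\ref{N1subgrp} and Lemma~\ref{charnot3}, and the content is precisely that $\mathrm{Ker}(\delta_{\phi,K_\q})=\phi(E_a(K_\q))$ gives the first isomorphism, while the two cited counts force both $E_a(K_\q)/\phi(E_a(K_\q))$ and $(A_\q^*/A_\q^{*3})_{N=1}$ to be trivial, hence equal inside $(L_\q^*/L_\q^{*3})_{N=1}$. Your side remark that Lemma~\ref{goodreduction} is not needed here (and that the implicit hypothesis $\q\nmid 3$ is in force) is also accurate.
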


\begin{proposition}\label{kumcharnot3sqr}
Consider a prime \begin{small}$\q \nmid 3$\end{small} of \begin{small}$K$\end{small} such that \begin{small}$a \in K_\q^{*2}$.\end{small} Then for \begin{small}$P \in E_a(K_\q)$:\end{small} 
\begin{enumerate}
    \item If \begin{small}$\upsilon_{\q}(4a) \equiv 0 \pmod{6}$,\end{small} then \begin{small}$\upsilon_{\q}(\delta_{\phi, K_\q}(P)) \equiv 0 \pmod 3.$\end{small} In fact, we have \begin{small}$\delta_{\phi, K_\q}(E_a(K_\q)) = (A_\q^*/A_\q^{*3})_{N=1}.$\end{small}
  
\item If \begin{small}$\upsilon_{\q}(4a) \not\equiv 0 \pmod{6}$,\end{small} then \begin{small}$\delta_{\phi, K_\q}(E_a(K_\q)) \cap (A_\q^*/A_\q^{*3})_{N=1}=\{1\}.$\end{small}
\end{enumerate}
\end{proposition}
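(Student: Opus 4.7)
The plan is to combine Prop.~\ref{type1} with the valuation criterion of Lemma~\ref{norm1mult3} and a cardinality argument. By Lemma~\ref{charnot3} and Prop.~\ref{N1subgrp}, both $\delta_{\phi,K_\q}(E_a(K_\q))$ and $(A_\q^*/A_\q^{*3})_{N=1}$ are order-$3$ cyclic subgroups of $(L_\q^*/L_\q^{*3})_{N=1}$; hence they either coincide or meet only in $(1,1)$. By Lemma~\ref{norm1mult3} the inclusion $\delta_{\phi,K_\q}(E_a(K_\q)) \subseteq (A_\q^*/A_\q^{*3})_{N=1}$ is equivalent to $\upsilon_\q(\delta_{\phi,K_\q}(P)) \equiv (0,0) \pmod{3}$ for every $P$, and the key quantity to track is $\upsilon_\q(2\sqrt{a}) = \upsilon_\q(4a)/2$, whose class modulo $3$ is exactly governed by the hypothesis on $\upsilon_\q(4a) \bmod 6$.

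For part~(2), I would evaluate Prop.~\ref{type1} at $P = (0,\sqrt{a})$: the image $\delta_{\phi,K_\q}(P) = (1/(2\sqrt{a}), 2\sqrt{a})$ has valuation $(-\upsilon_\q(4a)/2, \upsilon_\q(4a)/2) \not\equiv (0,0) \pmod{3}$ under the hypothesis, so by Lemma~\ref{norm1mult3} it lies outside $(A_\q^*/A_\q^{*3})_{N=1}$; in particular it is nontrivial, and the dichotomy above yields $\delta_{\phi,K_\q}(E_a(K_\q)) \cap (A_\q^*/A_\q^{*3})_{N=1} = \{1\}$. For part~(1), I would verify the valuation is $\equiv (0,0) \pmod{3}$ on every $P$. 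The points $O$ and $(0,\pm\sqrt{a})$ are handled by the explicit formulas in Prop.~\ref{type1} together with $\upsilon_\q(4a)/2 \equiv 0 \pmod{3}$. For non-torsion $P = (x_0,y_0)$, Prop.~\ref{type1} gives $\delta_{\phi,K_\q}(P) = (y_0 - \sqrt{a},\, y_0 + \sqrt{a})$ whose coordinates multiply to $y_0^2 - a = x_0^3$, so $u + v = 3\upsilon_\q(x_0)$ where $u = \upsilon_\q(y_0 - \sqrt{a})$ and $v = \upsilon_\q(y_0 + \sqrt{a})$. If $u = v$, then $2u = 3\upsilon_\q(x_0)$ and coprimality of $2$ and $3$ forces $3 \mid u$. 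If $u \neq v$, then the identity $(y_0 - \sqrt{a}) - (y_0 + \sqrt{a}) = -2\sqrt{a}$ yields $\min(u,v) = \upsilon_\q(2\sqrt{a}) = \upsilon_\q(4a)/2 \equiv 0 \pmod{3}$, and the complementary valuation $\max(u,v) = 3\upsilon_\q(x_0) - \min(u,v) \equiv 0 \pmod{3}$ is then automatic. In all cases $(u,v) \equiv (0,0) \pmod{3}$, so the image is contained in $(A_\q^*/A_\q^{*3})_{N=1}$ and equality follows by the cardinality count.

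The main obstacle I anticipate is handling part~(1) for non-torsion points, since a priori the pair $(u,v)$ can spread in many ways. The saving trick is that whenever $u \neq v$, the difference of the two coordinates is forced to equal $-2\sqrt{a}$, so $\min(u,v)$ is pinned down to the single controlled quantity $\upsilon_\q(4a)/2$; this makes the case split entirely self-contained and avoids any appeal to the Néron model or reduction theory for $E_a$ at primes above $2$, where the Weierstrass equation may fail to be minimal.
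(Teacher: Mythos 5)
Your proof is correct, and for the only delicate step --- part (1) at points outside $E_a[\phi](K_\q)$ --- it takes a genuinely cleaner route than the paper. The shared skeleton is the same: the explicit Kummer map of Prop.~\ref{type1}, the valuation criterion of Lemma~\ref{norm1mult3}, and the order counts from Lemma~\ref{charnot3} and Prop.~\ref{N1subgrp}, which give both your order-$3$ dichotomy and the final equality in part (1); your part (2), via the valuation of $\delta_{\phi,K_\q}(0,\sqrt{a})$, is essentially the paper's argument. The difference is in part (1): the paper splits according to whether $\upsilon_\q(y(P))=\upsilon_\q(\sqrt{a})$, and in the equal-valuation case it invokes sixth-power-freeness of $a$ to force $\upsilon_\q(a)=0$ when $\q \neq 2$, while the leftover case $\q=2$, $\upsilon_2(a)=4$ is settled by exhibiting a model with good reduction at $2$ and quoting Lemma~\ref{goodreduction}. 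You instead split on whether $u=\upsilon_\q(y_0-\sqrt{a})$ equals $v=\upsilon_\q(y_0+\sqrt{a})$: if $u=v$ then $2u=3\upsilon_\q(x_0)$ forces $3\mid u$, and if $u\neq v$ the ultrametric identity applied to $(y_0-\sqrt{a})-(y_0+\sqrt{a})=-2\sqrt{a}$ pins down $\min(u,v)=\upsilon_\q(4a)/2\equiv 0 \pmod 3$, whence $\max(u,v)=3\upsilon_\q(x_0)-\min(u,v)\equiv 0 \pmod 3$ as well. This is uniform in $\q$, needs neither sixth-power-freeness nor any reduction-theoretic input at $2$, and so proves a marginally more general statement with less machinery; what the paper's route buys instead is the explicit observation that $E_a$ has good reduction at $2$ in that configuration, which is of independent bookkeeping value (cf.\ Remark~\ref{numerical1}) but is not needed for the proposition. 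One minor slip: the points to which you apply the formula $(y_0-\sqrt{a},\,y_0+\sqrt{a})$ are those outside $E_a[\phi](K_\q)$, not the non-torsion points; since that is exactly the set your argument covers, nothing is affected.
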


\proof
As \begin{small}$a \in K_\q^{*2}$,\end{small} we have \begin{small}$E_{a}[{\phi}](K_\q) = \{ O, (0,  \sqrt{a} ), (0,-\sqrt{a})\}$\end{small} and \begin{small}$L_\q \cong K_\q \times K_\q$.\end{small}
\begin{enumerate}
    \item Consider the case \begin{small}$\upsilon_{\q}(4a) \equiv 0 \pmod{6}$.\end{small} We can assume \begin{small}$P \neq O$.\end{small} For \begin{small}$P=(0, \sqrt{a})$,\end{small} observe that \begin{small}$\upsilon_\q(2\sqrt{a}) \equiv 0 \pmod 3$.\end{small} Thus from Prop. \ref{type1}, \begin{small}$\upsilon_\q(\delta_{\phi, K_\q}(P))=\upsilon_\q \big(\frac{1}{2\sqrt{a}},2\sqrt{a} \big) \equiv 0 \pmod 3.$\end{small} The argument for \begin{small}$(0,-\sqrt{a})$\end{small} is similar.
	Next, consider \begin{small}$P \in E_a({K_\q})\setminus E_a[\phi](K_\q)$.\end{small} If \begin{small}$\upsilon_\q(y(P)) \neq \upsilon_\q(\sqrt{a})$,\end{small} then \begin{small}$3 \upsilon_\q(x(P)) = 2 \text{ min}\{ \upsilon_\q(y(P)), \upsilon_\q(\sqrt{a})  \} =2 \upsilon_\q (y(P) \pm \sqrt{a})$.\end{small} Hence again from Prop. \ref{type1}, \begin{small}$\upsilon_\q(\delta_{\phi, K_\q}(P)) \equiv 0 \pmod 3$.\end{small} Thus, it reduces to consider \begin{small}$\upsilon_\q(y(P)) = \upsilon_\q(\sqrt{a})$.\end{small}
	
	First we assume that \begin{small}$\q \neq 2$.\end{small} Then \begin{small}$\upsilon_{\q}(4a) = \upsilon_{\q}(a) \equiv 0 \pmod{6}$.\end{small} But \begin{small}$a$\end{small} is sixth-power free by assumption, whence \begin{small}$\upsilon_{\q}(a)=0$.\end{small} Then both \begin{small}$\sqrt{a}, \enspace y(P) \in \OO^*_{K_\q}$.\end{small} Consequently, at least one of \begin{small}$\upsilon_{\q}(y(P)-\sqrt{a})$\end{small} and \begin{small}$\upsilon_{\q}(y(P)+\sqrt{a})$\end{small} has be equal to $0$.
Further, using \begin{small}$3\upsilon_{\q}(x(P))=\upsilon_{\q}(y(P)-\sqrt{a})+\upsilon_{\q}(y(P)+\sqrt{a})$,\end{small} it follows that \begin{small}$\upsilon_{\q}(y(P) \pm \sqrt{a}) \equiv 0 \pmod{3}$.\end{small} Thus, \begin{small}$\upsilon_\q(\delta_{\phi, K_\q}(P)) \equiv 0 \pmod 3$.\end{small} 

Finally, we consider \begin{small}$\q = 2$.\end{small} By the given hypothesis and using \begin{small}$a$\end{small} is sixth-power free, we deduce \begin{small}$\upsilon_{2}({a})= 4$.\end{small} Observe that \begin{small}$a \in K_2^{*2}$\end{small} if and only if \begin{small}$a \in \Q_2^{*2}$\end{small} or \begin{small}$-3a \in \Q_2^{*2}$.\end{small} From this, it is easy to see that we can write \begin{small}$a = 2^4(4t+1)$, $t \in \Z$.\end{small} For \begin{small}$E_a: y^2=x^3+16(4t+1)$,\end{small} one has \begin{small}$\Delta_{E_a}=-2^{12} \cdot 3^3 \cdot (4t+1)^2$\end{small} and by Tate's algorithm, this is not a minimal model of the curve. The change of variables \begin{small}$(x,y)=(4x',8y'+4)$\end{small} transforms \begin{small}$E_a$\end{small} into \begin{small}$E_a':y'^2+y'=x'^3+t$\end{small} with  \begin{small}$\Delta_{E'_a}=-3^3 \cdot (4t+1)^2$,\end{small} which is clearly not divisible by $2$. Thus, \begin{small}$E_a$\end{small} has good reduction at $2$ and by Lemma \ref{goodreduction}, we obtain \begin{small}$\upsilon_2(\delta_{\phi, K_2}(P)) \equiv 0 \pmod 3$.\end{small}

By Lemma \ref{norm1mult3}, it follows that for all \begin{small}$\q \nmid 3$, $\delta_{\phi, K_\q}(E_a(K_\q)) \subseteq (A_\q^*/A_\q^{*3})_{N=1}$,\end{small} whenever \begin{small}$\upsilon_{\q}(4a) \equiv 0 \pmod{6}$.\end{small}
The equality follows from Prop. \ref{N1subgrp} and Lemma \ref{charnot3}.\\

\item Assume now that \begin{small}$\upsilon_{\q}(4a) \not\equiv 0 \pmod{6}$.\end{small} As explained above, \begin{small}$|(A_\q^*/A_\q^{*3})_{N=1}|=|\delta_{\phi, K_\q}(E_a(K_\q))|$ $ =3$,\end{small} for \begin{small}$a \in K_\q^{*2}$.\end{small}
Now, if \begin{small}$\upsilon_{\q}(4a) \equiv 2 \pmod{6}$,\end{small} then \begin{small}$\upsilon_\q(2\sqrt{a})=1$,\end{small} as \begin{small}$a$\end{small} is sixth-power free. Hence, for \begin{small}$O \neq P \in E_a[\phi](K_\q)$\end{small} we get that \begin{small}$\upsilon_\q(\delta_{\phi,K_\q}(P)) = 1.$\end{small} So by Lemma \ref{norm1mult3}, \begin{small}$\delta_{\phi, K_\q}(E_a(K_\q)) \cap (A_\q^*/A_\q^{*3})_{N=1} = \{1\}$\end{small} in this case.
On the other hand, if \begin{small}$\upsilon_{\q}(4a) \equiv 4 \pmod{6}$,\end{small} then \begin{small}$\upsilon_\q(2\sqrt{a})=2$.\end{small} Hence, for \begin{small}$O \neq P \in E_a[\phi](K_\q)$, $\upsilon_\q(\delta_{\phi,K_\q}(P)) = 2.$\end{small} So we arrive at the same conclusion. \qed
\end{enumerate}

\vspace{3mm}
\subsubsection{\bf{Local theory at the prime $\p \mid 3$} }
By Tate's algorithm, we see that, \begin{small}$E_a:y^2=x^3+a$\end{small} is a minimal Weierstrass equation over \begin{small}$\Z_3[\p]$,\end{small} where \begin{small}$\p =1- \zeta$.\end{small}  We compute \begin{small}$\phi'(0)$\end{small} using formal groups of elliptic curves, following \cite[pg.~92]{sch}. 
Let \begin{small}$\phi(x,y)=(X,Y)$.\end{small} Write \begin{small}$Z:=-{X}/{Y}$\end{small} as a power series in \begin{small}$z:=-{x}/{y}$\end{small} as follows:
\begin{small}
\begin{equation}\label{eq:formlgrp}
Z=-\frac{X}{Y}=-\frac{\frac{x^3+4a}{\p^2x^2}}{\frac{y(x^3-8a)}{\p^3x^3}}=-\p\frac{x}{y}\Big(1+\frac{12a}{x^3-8a}\Big)=\p z \Big(1+ \frac{12a}{x^3-8a}\Big)=\p z+O(z^2),
\end{equation}
\end{small}
whence \begin{small}$\phi'(0)=\p$\end{small} and \begin{small}$||\phi'(0)||_\p^{-1}=3$.\end{small}

\begin{lemma}\label{char3}
For \begin{small}$\p \mid 3$,\end{small} we have
\begin{small}
$\Big|\frac{E_a(K_\p)}{\phi(E_a(K_\p))}\Big| = ||\phi'(0)||_\p^{-1} \times |E_a[\phi](K_\p)|  = \begin{cases}
3, & \text{ if } a \notin K_\p^{*2} \\
9, & \text{ if } a \in K_\p^{*2}.
\end{cases}$
\end{small}
\end{lemma}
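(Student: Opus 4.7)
The plan is to apply Schaefer's formula \eqref{eq:schaeferfor} directly. Since $\phi\colon E_a \to E_a$ is an endomorphism (the target and source curves are identified over $K_\p$ via the isomorphism described in \S\ref{iso}), we have $\widehat{E} = E_a$, so the Tamagawa factors $c(\widehat{E})$ and $c(E)$ cancel exactly. This reduces the formula to
\begin{small}
$$\Big|\frac{E_a(K_\p)}{\phi(E_a(K_\p))}\Big| = \|\phi'(0)\|_\p^{-1} \cdot |E_a[\phi](K_\p)|,$$
\end{small}
which is the identity claimed in the statement, leaving only two quantities to evaluate.

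For $\|\phi'(0)\|_\p^{-1}$, I would invoke the formal-group computation already carried out in \eqref{eq:formlgrp}. The explicit series expansion gives $Z = \p z + O(z^2)$, so $\phi'(0)=\p$ in the formal group. Since $\p$ is a uniformizer of $K_\p$ with residue field of order $3$, the normalized absolute value satisfies $\|\phi'(0)\|_\p = 1/3$, hence $\|\phi'(0)\|_\p^{-1} = 3$.

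For $|E_a[\phi](K_\p)|$, recall from \S\ref{iso} that $E_a[\phi]=\{O,(0,\sqrt{a}),(0,-\sqrt{a})\}$ as a subgroup scheme. These non-trivial points are rational over $K_\p$ precisely when $\sqrt{a}\in K_\p$, i.e.\ when $a\in K_\p^{*2}$. Thus
\begin{small}
$$|E_a[\phi](K_\p)| = \begin{cases} 1, & a\notin K_\p^{*2},\\ 3, & a\in K_\p^{*2}. \end{cases}$$
\end{small}
Combining these two inputs yields $3\cdot 1 = 3$ in the non-square case and $3\cdot 3 = 9$ in the square case, matching the statement. There is no real obstacle here; the only subtlety is making sure one applies Schaefer's formula with $\widehat{E}$ identified with $E_a$ over $K_\p$ so the Tamagawa numbers genuinely cancel, and that the minimality of the Weierstrass model $y^2=x^3+a$ over $\OO_{K_\p}$ (which holds by Tate's algorithm since $v_\p(a) \le 5$ after removing sixth powers) is used to legitimately read off $\phi'(0)$ from \eqref{eq:formlgrp}.
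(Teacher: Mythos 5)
Your proposal is correct and follows essentially the same route as the paper: apply Schaefer's formula \eqref{eq:schaeferfor} with source and target identified (so the Tamagawa factors cancel), read off $\|\phi'(0)\|_\p^{-1}=3$ from the formal-group expansion \eqref{eq:formlgrp} on the $\p$-minimal model, and count $|E_a[\phi](K_\p)|$ as $1$ or $3$ according to whether $\sqrt{a}\in K_\p$. The remarks you add on the cancellation of Tamagawa numbers and on $\p$-minimality are exactly the points the paper establishes just before the lemma, so there is no gap.
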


\begin{proof}
Using \eqref{eq:schaeferfor}, we have \begin{small}$|E_a(K_\p)/\phi(E_a(K_\p))| = ||\phi'(0)||_\p^{-1} \times |E_a[\phi](K_\p)|$.\end{small}
From \eqref{eq:formlgrp}, we know that \begin{small}$||\phi'(0)||_\p^{-1}=3$\end{small} and 
\begin{small}$|E_a[\phi](K_\p)| = \begin{cases} 1, & \text{ if } a \notin K_\p^{*2},\\ 3, & \text{ if } a \in K_\p^{*2}. \end{cases}$\end{small}
Hence the claim follows.
\end{proof}

\begin{lemma}\label{kumchar3notsqr}
Assume that \begin{small}$a \notin K_\p^{*2}$.\end{small} Then we have  \begin{small}$\delta_{\phi, K_\p}(E_a(K_\p)) \subseteq (A_\p^*/A_\p^{*3})_{N=1}.$\end{small} Moreover, we have that the index \begin{small}$ [(A_\p^*/A_\p^{*3})_{N=1} : \delta_{\phi, K_\p}(E_a(K_\p))]=3.$\end{small}
\end{lemma}

\begin{proof}
Let \begin{small}$\mathfrak{P}$\end{small} be a prime of \begin{small}$L$\end{small} dividing \begin{small}$\p$.\end{small}
Note that \begin{small}$L_\mathfrak{p}/K_\p$\end{small} is an unramified quadratic field extension and \begin{small}$\upsilon_{\mathfrak{P}}(x)=\upsilon_\p(x)$\end{small} for every \begin{small}$x \in K_\p$.\end{small} 
We may assume that \begin{small}$P=(x(P),y(P)) \neq O$.\end{small}
If \begin{small}$\upsilon_{\mathfrak{P}}(y(P)) \neq \upsilon_{\mathfrak{P}}(\sqrt{a})$,\end{small} then \begin{small}$3 \ \upsilon_{\mathfrak{P}}(x(P)) = \upsilon_{\mathfrak{P}} (y(P) + \sqrt{a}) +\upsilon_{\mathfrak{P}} (y(P) - \sqrt{a}) =  2 \ \upsilon_{\mathfrak{P}} (y(P) \pm \sqrt{a})$\end{small} whence, \begin{small}$\upsilon_{\mathfrak{P}}(y(P) \pm \sqrt{a}) \equiv 0 \pmod{3}$.\end{small} So it reduces to the case when \begin{small}$\upsilon_{\mathfrak{P}}(y(P)) = \upsilon_{\mathfrak{P}} (\sqrt{a}) =n$,\end{small} say. Once we show \begin{small}$\upsilon_{\mathfrak{P}}(y(P) \pm \sqrt{a}) \equiv 0 \pmod 3$,\end{small} then \begin{small}$\delta_{\phi, K_\p}(P) \in (A_\p^*/A_\p^{*3})_{N=1}$\end{small} follows from Lemma \ref{norm1mult3}.

Write \begin{small}$y(P)= \p^n \alpha$, \ $ a= \p^{2n} \beta$\end{small} with \begin{small}$\alpha \in \OO_{K_\p}^*$\end{small} and \begin{small}$\beta \in \OO_{K_\p}^* \setminus \OO_{K_\p}^{*2}$.\end{small} Let \begin{small}$\overline{\alpha}, \ \overline{\beta}$\end{small} be the corresponding images in  the residue field \begin{small}$\kappa_\p$.\end{small} Note \begin{small}$\overline{\alpha}^2 \in \kappa_\p^{*2}$\end{small} and by Hensel's lemma \begin{small}$\overline{\beta} \notin \kappa_\p^{*2}$.\end{small} It follows that \begin{small}$\alpha^2 - \beta \in \OO_{K_\p}^*$.\end{small} Hence, \begin{small}$\upsilon_{\mathfrak{P}}(\alpha - \sqrt{\beta}) +  \upsilon_{\mathfrak{P}}(\alpha + \sqrt{\beta}) =0$\end{small} and thus \begin{small}$\upsilon_{\mathfrak{P}}(\alpha \pm \sqrt{\beta})=0$.\end{small} Therefore, we deduce that \begin{small}$\upsilon_{\mathfrak{P}}(y(P) \pm \sqrt{a})=n$.\end{small} Since \begin{small}$3 \upsilon_{\mathfrak{P}}(x(P)) = 2n$,\end{small} we get \begin{small}$n \equiv 0 \pmod{3}$.\end{small}

The index \begin{small}$[(A_\p^*/A_\p^{*3})_{N=1}:\delta_{\phi, K_\p}(E_a(K_\p))]=3$\end{small} follows from Prop. \ref{N1subgrp} and Lemma \ref{char3}.
\end{proof}
\begin{rem}\label{anotsqV3}
If \begin{small}$a \notin K_\p^{*2}$,\end{small} then by Prop. \ref{sizeofV3}, \begin{small}$\{1\}=V_3/A_\p^{*3} \subseteq \delta_{\phi, K_\p}(E_a(K_\p))$.\end{small}
\end{rem}
Finally, we consider the case \begin{small}$a \in K_\p^{*2}$.\end{small} 

\begin{proposition}\label{propos5.7}
Assume that \begin{small}$a \in K_\p^{*2}$.\end{small} We have 
\begin{enumerate}
	\item If \begin{small}$\upsilon_{\p}(4a) \equiv 0 \pmod{6}$,\end{small} then we have that \begin{small}$\delta_{\phi, K_\p}(E_a(K_\p)) \subseteq (A_\p^*/A_\p^{*3})_{N=1}$.\end{small} Moreover, one has that the index \begin{small}$[(A_\p^*/A_\p^{*3})_{N=1} : \delta_{\phi, K_\p}(E_a(K_\p))]=3.$\end{small}
	\item If \begin{small}$\upsilon_{\p}(4a) \not\equiv 0 \pmod{6}$,\end{small} then we have that  \begin{small}$\delta_{\phi, K_\p}(E_a(K_\p)) \not\subset (A_\p^*/A_\p^{*3})_{N=1}$.\end{small} Further, one has that the intersection \begin{small}$|\delta_{\phi, K_\p}(E_a(K_\p)) \cap (A_\p^*/A_\p^{*3})_{N=1}|=3.$\end{small}
\end{enumerate}
\end{proposition}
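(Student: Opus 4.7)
Since $a \in K_\p^{*2}$ and $\p \mid 3$ (so $\upsilon_\p(2)=0$ and $\upsilon_\p(4a)=\upsilon_\p(a)$), the sixth-power freeness of $a$ forces $\upsilon_\p(a) \in \{0, 2, 4\}$. The plan is to analyse the components of the Kummer map coordinate-by-coordinate using the explicit formulas of Prop.~\ref{type1}, together with the fact (Lemma~\ref{char3} and Prop.~\ref{N1subgrp}) that $|\delta_{\phi,K_\p}(E_a(K_\p))|=9$ while $|(A_\p^*/A_\p^{*3})_{N=1}|=27$.

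For part (1), the hypothesis forces $\upsilon_\p(a)=0$, so $\sqrt a$ is a unit. For the $\phi$-torsion points $(0,\pm\sqrt a)$, the two components of $\delta_{\phi,K_\p}$ are units, so their valuations are $0$. For an affine $P$ with $P\notin E_a[\phi](K_\p)$, I would use the identity $(y(P)-\sqrt a)(y(P)+\sqrt a)=x(P)^3$ together with the observation that the difference of these factors is $2\sqrt a$, a unit; hence they cannot both have positive valuation. Splitting into the subcases $\upsilon_\p(y(P))<0$, $>0$, and $=0$, the relation $\upsilon_\p(y-\sqrt a)+\upsilon_\p(y+\sqrt a)=3\upsilon_\p(x)$ then forces each factor to have valuation divisible by $3$. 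This yields $\delta_{\phi,K_\p}(E_a(K_\p)) \subseteq (A_\p^*/A_\p^{*3})_{N=1}$ by Lemma~\ref{norm1mult3}, and the index is $27/9=3$.

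For part (2), we have $\upsilon_\p(a)\in\{2,4\}$, so $\upsilon_\p(\sqrt a)\in\{1,2\}$. Taking $P_0=(0,\sqrt a)$, Prop.~\ref{type1} gives $\delta_{\phi,K_\p}(P_0)=(1/(2\sqrt a),\, 2\sqrt a)$ whose first coordinate has valuation $-\upsilon_\p(\sqrt a)\not\equiv 0\pmod 3$. By Lemma~\ref{norm1mult3}, $\delta_{\phi,K_\p}(P_0)\notin (A_\p^*/A_\p^{*3})_{N=1}$, giving the non-containment assertion. To compute the intersection, define the character
\[
\chi\colon \delta_{\phi,K_\p}(E_a(K_\p))\longrightarrow \Z/3\Z,\qquad (x_1,x_2)\longmapsto \upsilon_\p(x_1)\bmod 3.
\]
The norm-$1$ condition already enforces $\upsilon_\p(x_1)+\upsilon_\p(x_2)\equiv 0\pmod 3$, so $\ker\chi$ is precisely the intersection $\delta_{\phi,K_\p}(E_a(K_\p))\cap (A_\p^*/A_\p^{*3})_{N=1}$ by Lemma~\ref{norm1mult3}. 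Since $\chi(\delta_{\phi,K_\p}(P_0))\ne 0$, the homomorphism $\chi$ is surjective, so $|\ker\chi|=9/3=3$.

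The main obstacle is the coordinate-valuation analysis in part (1) when $\upsilon_\p(y(P))=\upsilon_\p(\sqrt a)=0$; here neither factor $y\pm\sqrt a$ is obviously a unit, and one must use crucially that $2\sqrt a$ is a unit to rule out the configuration where both valuations are strictly positive. Once that case is handled, the rest is bookkeeping with the explicit cardinalities already established in \S\ref{type1theory} and \S\ref{ant}.
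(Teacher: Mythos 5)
Your proposal is correct and takes essentially the same route as the paper: the same valuation analysis of the explicit Kummer map (with the unit $2\sqrt a$ ruling out both factors $y\pm\sqrt a$ having positive valuation) gives the containment and index $27/9=3$ in part (1), and the same torsion point $(0,\sqrt a)$ witnesses non-containment in part (2). The only cosmetic difference is the last counting step of (2): the paper compares cardinalities inside $(L_\p^*/L_\p^{*3})_{N=1}$ of order $81$, whereas you use a surjective valuation character onto $\Z/3\Z$ — both yield an intersection of order $3$.
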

\proof
Observe that \begin{small}$\upsilon_{\p}(4a) \equiv 0 \pmod{6}$\end{small} if and only if \begin{small}$\p \nmid a$.\end{small} Also recall \begin{small}$L_\p \cong K_\p \times K_\p$\end{small} and \begin{small}$|(A_\p^*/A_\p^{*3})_{N=1}|=27$,\end{small}  \begin{small}$|\delta_{\phi, K_\p}(E_a(K_\p))|=9$\end{small} (by Prop. \ref{N1subgrp} and Lemma \ref{char3}, resp.)
\begin{enumerate}
    \item The proof of this case is similar to Prop. \ref{kumcharnot3sqr}(1) for primes not dividing $2$.
Once again the index \begin{small}$[(A_\p^*/A_\p^{*3})_{N=1} : \delta_{\phi, K_\p}(E_a(K_\p))]$\end{small} is computed directly from Prop. \ref{N1subgrp} and Lemma \ref{char3}.
\item In this case, \begin{small}$\p \mid a$.\end{small} Then for the point \begin{small}$P=(0,\sqrt{a})$,\end{small}  we have  \begin{small}$\upsilon_{\p}(\delta_{\phi, K_\p}(P)) = \upsilon_\p\big((\frac{1}{2\sqrt{a}}, 2\sqrt{a})\big) \not\equiv 0 \pmod{3}$\end{small} and so \begin{small}$\delta_{\phi, K_\p}(E_a(K_\p)) \not\subseteq (A_\p^*/A_\p^{*3})_{N=1}$.\end{small} Note that both \begin{small}$\delta_{\phi, K_\p}(E_a(K_\p))$\end{small} and \begin{small}$(A_\p^*/A_\p^{*3})_{N=1}$\end{small} are subgroups of \begin{small}$(L_\p^*/L_\p^{*3})_{N=1}$; \end{small} the last group being isomorphic to \begin{small}$K_\p^*/K_\p^{*3}$\end{small}  has order $81$. Then looking at their respective cardinalities, 
	we deduce that \begin{small}$|\delta_{\phi, K_\p}(E_a(K_\p)) \cap (A_\p^*/A_\p^{*3})_{N=1}|=3$.\end{small} \qed
\end{enumerate}

\begin{proposition}\label{V3fortype1}
Suppose that \begin{small}$a \in K_\p^{*2}$.\end{small} If \begin{small}$\upsilon_\p(4a) \equiv 0 \pmod 6$,\end{small} then \begin{small}$V_3/A_\p^{*3} \subset \delta_{\phi, K_\p}(E_a(K_\p))$.\end{small}
\end{proposition}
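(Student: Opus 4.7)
The plan is to use that $|V_3/A_\p^{*3}| = 3$ by Prop.~\ref{sizeofV3}, so producing a single nontrivial class of $V_3/A_\p^{*3}$ inside the Kummer image $\delta_{\phi,K_\p}(E_a(K_\p))$ suffices. The hypothesis $\upsilon_\p(4a) \equiv 0 \pmod 6$, together with $a$ being sixth-power free in $K$ and $a \in K_\p^{*2}$, forces $\upsilon_\p(a) = 0$; in particular $\sqrt{a} \in \OO_{K_\p}^*$ and $L_\p \cong K_\p \times K_\p$, so the explicit Kummer formula in Prop.~\ref{type1} for the split case is available.

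The required point $P$ will be constructed via the formal group $\hat{E}_a$ at $\p$. Using $z = -x/y$ and $w = -1/y$, the equation $y^2 = x^3 + a$ rewrites as $w = z^3 + aw^3$, yielding $y = -z^{-3} + az^3 + O(z^9)$. I will take $P \in \hat{E}_a(\p\OO_{K_\p})$ with $z(P) = \p$. Since $\sqrt{a}$ is a $\p$-adic unit, the points $(0, \pm \sqrt{a})$ do not reduce to $\widetilde{O}$, so $P \notin E_a[\phi](K_\p)$, and Prop.~\ref{type1} gives
\[\delta_{\phi,K_\p}(P) = \bigl(y(P) - \sqrt{a},\; y(P) + \sqrt{a}\bigr).\]
Factoring out $-z(P)^{-3} = -\p^{-3}$, which is a cube in $K_\p^*$ since $-1 = (-1)^3$, the two coordinates are represented modulo $K_\p^{*3}$ by
\[y(P) \mp \sqrt{a} \equiv 1 \pm \sqrt{a}\,\p^3 + O(\p^6) \pmod{K_\p^{*3}},\]
each lying in $U^3_{K_\p} \subset V_{K_\p}$. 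For the norm condition: the product of the two representatives equals $(y(P)^2-a)\cdot \p^6 = x(P)^3 \cdot \p^6$, and the expansion $x(P) = \p^{-2}\bigl(1 - a\p^6 + O(\p^{12})\bigr)$ combined with $\upsilon_\p(3)=2$ yields $x(P)^3 \cdot \p^6 \in U^8_{K_\p} \subset U^4_{K_\p}$, which is contained in $\OO_{K_\p}^{*3}$ by Lemma~\ref{V3U3U4}. Thus $\delta_{\phi,K_\p}(P) \in V_3/A_\p^{*3}$.

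For nontriviality, observe that $\sqrt{a} \in \OO_{K_\p}^*$ implies that $1 + \sqrt{a}\,\p^3 + O(\p^6)$ lies in $U^3_{K_\p} \setminus U^4_{K_\p}$, so by $(U^1_{K_\p})^3 = U^4_{K_\p}$ from Lemma~\ref{V3U3U4} it is not a cube in $\OO_{K_\p}^*$. Hence $\delta_{\phi,K_\p}(P)$ is a nontrivial element of the cyclic order-$3$ group $V_3/A_\p^{*3}$, and therefore generates it, giving the claimed inclusion. The only delicate step is the formal-group expansion and the valuation bookkeeping it requires; this is routine here because $\sqrt{a}$ is a $\p$-adic unit and $\upsilon_\p(3) = 2$ keeps the higher-order contributions safely inside $U^4_{K_\p}$.
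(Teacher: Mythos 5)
Your proof is correct and follows essentially the same route as the paper: construct a point on $E_a(K_\p)$ via the formal group with parameter of valuation $1$, compute its image under the explicit Kummer map of Prop.~\ref{type1}, check both coordinates land in $V_{K_\p}$ with product a cube, and conclude by nontriviality inside the order-$3$ group $V_3/A_\p^{*3}$. The only difference is cosmetic: you take $z=\p$ and verify non-cubeness directly from $1+\sqrt{a}\,\p^3\in U^3_{K_\p}\setminus U^4_{K_\p}$ and $(U^1_{K_\p})^3=U^4_{K_\p}$, whereas the paper takes $z=\p\beta/\sqrt{a}$ so as to land on the specific non-cube unit of Prop.~\ref{U3cubes}.
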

\begin{proof}
Recall from  the Definition \ref{defnofV3} that
\begin{small}$V_3 = \{ (u_1,u_2) \in V_{K_\p} \times V_{K_\p}  \mid u_1u_2 \in \OO_{K_\p}^{*3} \}$.\end{small}
By definition, \begin{small}$V_3/A_\p^{*3}$\end{small} is a subgroup of \begin{small}$(A_\p^*/A_\p^{*3})_{N=1}$\end{small} and it has order $3$ (Prop. \ref{sizeofV3}). Similarly, \begin{small}$\delta_{\phi,K_\p}(E_a(K_\p))$\end{small} is a  subgroup of \begin{small}$(A_\p^*/A_\p^{*3})_{N=1}$\end{small} of order $9$, by Prop. \ref{propos5.7} and Lemma \ref{char3}. Thus, it suffices to show that \begin{small}$V_3/A_\p^{*3}$\end{small} and \begin{small}$\delta_{\phi,K_\p}(E_a(K_\p))$\end{small} have non-trivial intersection.

We start by choosing a unit \begin{small}$u = 1+ \p^3 \beta \in U_{K_\p}^3$\end{small} such that \begin{small}$\beta \notin \p$\end{small} and \begin{small}$\overline{u} \notin (\OO_{K_\p}^*/U_{K_\p}^4)^{3}$\end{small} (existence of such a unit follows from Prop. \ref{U3cubes}(1)).  Note that the Frobenius map \begin{small}$x \mapsto x^3$\end{small} is the identity map on \begin{small}$\OO_{K_\p}/\p \cong \F_3$.\end{small} So for \begin{small}$w := \frac{\beta}{\sqrt{a}} \in \OO_{K_\p}^*$,\end{small} we have \begin{small}$w \equiv w^3 \pmod{\p}$.\end{small} We set \begin{small}$z:= \p w $\end{small} and notice that \begin{small}$\upsilon_\p(z) =1$.\end{small} Let \begin{small}$P(z)=(x(P(z)),y(P(z)))$\end{small} be the corresponding point on \begin{small}$E_a(K_\p)$\end{small} obtained by using the formal group of the elliptic curve. Then \begin{small}$\upsilon_\p(y(P(z))) = -3$, $\upsilon_\p(x(P(z))) = -2$\end{small} and 
\begin{small}$z^3(y(P(z)) - \sqrt{a}) = -1 - \sqrt{a} z^3 + O(z^4)$\end{small} by \cite[pg.~118, \S4]{sil1}.

Hence, \begin{small}${\p^{3}\omega^{3}(y(P(z))- \sqrt{a})} \equiv -1 \pmod {\p^3}$\end{small} which implies \begin{small}$\overline{\p^{3}\omega^{3}(y(P(z))- \sqrt{a})} \in \big({\OO_{K_\p}^*}/{U^3_{K_\p}}\big)^3$.\end{small} Therefore, by definition, \begin{small}$\p^{3}\omega^{3}(y(P(z))- \sqrt{a}) \in V_{K_\p}$.\end{small} Similarly, \begin{small}$\p^{3}\omega^{3}(y(P(z))+ \sqrt{a}) \in V_{K_\p}$\end{small} as well. Now clearly, \begin{small}$\p^{3}\omega^{3}(y(P(z))- \sqrt{a}) \cdot \p^{3}\omega^{3}(y(P(z))+ \sqrt{a})=\big(\p^2 \omega^2 x(P(z))\big)^3 \in \OO_{K_\p}^{*3}.$\end{small} Thus, by definition, \begin{small}$\big(\p^{3}\omega^{3}(y(P(z))- \sqrt{a}), \ \p^{3}\omega^{3}(y(P(z))+\sqrt{a})\big) \in V_3$.\end{small} We now claim that \begin{small}$\big(\overline{\p^{3}\omega^{3}(y(P(z))- \sqrt{a})}, \ \overline{\p^{3}\omega^{3}(y(P(z))+\sqrt{a})}\big)$\end{small} is a non-trivial element of \begin{small}$V_3/A_\p^{*3}$.\end{small} Indeed, \begin{small}${\p^{3}\omega^{3}(y(P(z))- \sqrt{a})} \equiv -1 - \p^3 \sqrt{a} w^3 \equiv -u \pmod{\p^4}$\end{small} and \begin{small}$\overline{u} \notin (\OO_{K_\p}^*/U_{K_\p}^4)^{3}$.\end{small} Hence, it follows that \begin{small}${\p^{3}\omega^{3}(y(P(z))- \sqrt{a})} \notin \OO_{K_\p}^{*3}$\end{small} and the claim is established.\\
Now, \begin{small}$\delta_{\phi,K_\p}(P(z)) =(\overline{y(P(z)) - \sqrt{a}}, \ \overline{y(P(z)) + \sqrt{a}})=  (\overline{\p^{3}\omega^3(y(P(z)) -  \sqrt{a} )}, \ \overline{\p^{3}\omega^3(y(P(z)) + \sqrt{a})})$ $\in (A_\p^*/A_\p^{*3})_{N=1}.$\end{small}
Thus, \begin{small}$\delta_{\phi,K_\p}(P(z))$\end{small} is a non-trivial element of \begin{small}$V_3/A_\p^{*3} \cap \delta_{\phi,K_\p}(E_a(K_\p))$.\end{small}
\end{proof}

\subsection{Bounds for $\phi$-Selmer group}\label{type1global}

\subsubsection{{\bf Selmer Group over $K$}}
Recall from \S\ref{iso},  we have a \begin{small}$K$\end{small}-rational $3$-isogeny \begin{small}$\phi:E_a \to E_a$.\end{small} We now use the local theory developed in \S\ref{type1theory} to give bounds on \begin{small}$\dim_{\F_3}{\rm Sel}^\phi(E_a/K)$.\end{small}

\begin{defn}\label{defofSa}
We define a finite subset \begin{small}$S_a$\end{small} of the set \begin{small}$\Sigma_K$\end{small} of all finite places of $F$, as follows:
\begin{small}$$ S_a:=\{ \q \in \Sigma_K  \mid a \in K_\q^{*2} \text{ and } \upsilon_\q(4a) \not\equiv 0 \pmod 6 \}.$$\end{small}
Further, for \begin{small}$a \notin K^{*2}$,\end{small} define \begin{small}$S_a(L) :=\{ \mathfrak{Q} \in \Sigma_L  \mid  \mathfrak{Q} \cap \OO_K \in S_a \}.$\end{small}

Note that if \begin{small}$a \in K^{*2}$,\end{small} then \begin{small}$S_a:=\{ \q \in \Sigma_K  \mid \upsilon_\q(4a) \not\equiv 0 \pmod 6 \}.$\end{small}
\end{defn}

\begin{rem}\label{numerical1}
Using Tate's algorithm, we observe that if a prime \begin{small}$\q \in S_a$,\end{small} then \begin{small}$E_a$\end{small} has additive reduction of Kodaira  type \rom{4} or \rom{4}$^*$ at \begin{small}$\q$\end{small} and the Tamagawa number \begin{small}$c_\q(E_a)=3$.\end{small}
Thus, a prime \begin{small}$\q \neq \p$\end{small} of \begin{small}$K$\end{small} is in \begin{small}$S_a \Leftrightarrow$\end{small} the Tamagawa number \begin{small}$c_\q(E_a)$\end{small} of \begin{small}$E_a$\end{small} at \begin{small}$\q$\end{small} is $3$. 
The prime \begin{small}$\p \mid 3$\end{small} of \begin{small}$K$\end{small} is in \begin{small}$S_a \Leftrightarrow 3 \mid a$\end{small} and the Tamagawa number \begin{small}$c_\p(E_a)=3$.\end{small} 

This numerical condition on \begin{small}$\upsilon_\q(4a)$\end{small} is perhaps easier to verify than computing the Tamagawa numbers \begin{small}$c_\q(E_a)$,\end{small} which is discussed in \cite{ss}. As an example, for a given \begin{small}$a \notin K^{*2}$,\end{small} we have  a criterion to decide whether \begin{small}$\p \in S_a$: \end{small}
If \begin{small}$a \notin K^{*2}$\end{small} i.e. \begin{small}$a \neq n^2$\end{small} or \begin{small}$-3n^2$,\end{small} then \begin{small}$\p \in S_a$\end{small} if and only if one of the following holds:
\begin{enumerate}
    \item \begin{small}$3 \mid\mid a$\end{small} and \begin{small}$\frac{a}{3} \equiv 2 \pmod 3$,\end{small} 
	\item \begin{small}$9 \mid\mid a$\end{small} and \begin{small}$\frac{a}{9} \equiv 1 \pmod 3$.\end{small}
\end{enumerate} 
\end{rem}

\vspace{2mm}
Recall from \eqref{eq:newseldef},
\begin{small}${\rm Sel}^\phi(E_a/K)=\{ \overline{x} \in (L^*/L^{*3})_{N=1} \mid \overline{x} \in \text{Im } \delta_{\phi,K_\q} \text{ for all } \q \in \Sigma_K \}$.\end{small} The subgroups \begin{small}$M(S_a,a)$, $N(S_a,a)$, $N'(S_a,a)$\end{small} of \begin{small}$\big(L^*/L^{*3}\big)_{N=1}$\end{small} are defined as in Prop. \ref{newM1M2sel}. We are now ready to discuss the bounds on \begin{small}${\rm Sel}^\phi(E_a/K)$\end{small}.

\begin{theorem}\label{type1selmer}
We have the inclusion of \begin{small}$\F_3$-\end{small}modules and the inequalities of their $3$-ranks: 
\begin{enumerate}
    \item If \begin{small}$a \not\in K^{*2}$,\end{small} then \begin{small}$M(S_a,a) \subset {\rm Sel}^\phi(E_a/K) \subset N(S_a,a)$.\end{small} Hence, we have that \begin{small}$$h^3_{S_a(L)} \le \dim_{\F_3} {\rm Sel}^\phi(E_a/K) \le h^3_{S_a(L)}+|S_a(L)|+2.$$\end{small}
    \item If \begin{small}$a \in K^{*2}$,\end{small} then \begin{small}${\rm Sel}^\phi(E_a/K) \subset N'(S_a,a)$.\end{small} Hence, we have that \begin{small}$\dim_{\F_3} {\rm Sel}^\phi(E_a/K) \le |S_a|+1$.\end{small}
\end{enumerate}
\end{theorem}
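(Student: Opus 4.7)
\medskip

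\noindent\textbf{Proof proposal.} The plan is to match, prime by prime, the local conditions defining $\mathrm{Sel}^\phi(E_a/K)$ in \eqref{eq:newseldef}--\eqref{eq:newseldefsq} with those defining $M(S_a,a)$, $N(S_a,a)$ and $N'(S_a,a)$ in Prop.~\ref{newM1M2sel}, using the local images $\delta_{\phi,K_\q}(E_a(K_\q))$ computed in \S\ref{type1theory} as the dictionary, and then convert the resulting containments into $\F_3$-dimension inequalities via Prop.~\ref{boundsgen}.

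For the upper bound I would check that for every $\q \notin S_a$ one has $\delta_{\phi,K_\q}(E_a(K_\q)) \subseteq (A_\q^*/A_\q^{*3})_{N=1}$. The four cases are: $\q\nmid 3$ with $a\notin K_\q^{*2}$ (Cor.~\ref{kumcharnot3}, where both sides are trivial); $\q\nmid 3$ with $a\in K_\q^{*2}$ and $\upsilon_\q(4a)\equiv 0\pmod 6$ (Prop.~\ref{kumcharnot3sqr}(1)); $\p\mid 3$ with $a\notin K_\p^{*2}$ (Lemma~\ref{propos5.5}); and $\p\mid 3$ with $a\in K_\p^{*2}$ and $\upsilon_\p(4a)\equiv 0\pmod 6$ (Prop.~\ref{propos5.7}(1)). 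By definition of $S_a$, the remaining primes (those with $a\in K_\q^{*2}$ and $\upsilon_\q(4a)\not\equiv 0\pmod 6$, where Prop.~\ref{kumcharnot3sqr}(2) or Prop.~\ref{propos5.7}(2) shows the containment fails) are exactly the elements of $S_a$, so no constraint is imposed there. This yields $\mathrm{Sel}^\phi(E_a/K)\subseteq N(S_a,a)$ when $a\notin K^{*2}$ and $\mathrm{Sel}^\phi(E_a/K)\subseteq N'(S_a,a)$ when $a\in K^{*2}$.

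For the lower bound (case $a\notin K^{*2}$) I would verify that every $\overline{x}\in M(S_a,a)$ lands in $\mathrm{Im}(\delta_{\phi,K_\q})$ at every prime $\q$. At $\q\in S_a$ the defining condition $\overline{x}=\overline{1}$ is trivially satisfied; at $\q\notin S_a\cup\{\p\}$ the condition $\overline{x}\in (A_\q^*/A_\q^{*3})_{N=1}$ lands in $\mathrm{Im}(\delta_{\phi,K_\q})$ because Cor.~\ref{kumcharnot3} (when $a\notin K_\q^{*2}$) and Prop.~\ref{kumcharnot3sqr}(1) (when $a\in K_\q^{*2}$, in which case $\q\notin S_a$ forces $\upsilon_\q(4a)\equiv 0\pmod 6$) in fact give equality of these groups; at $\p$ with $\p\notin S_a$ the condition $\overline{x}\in V_3/A_\p^{*3}$ lands in $\mathrm{Im}(\delta_{\phi,K_\p})$ by Remark~\ref{anotsqV3} (when $a\notin K_\p^{*2}$) or by Prop.~\ref{V3fortype1} (when $a\in K_\p^{*2}$, which again forces $\upsilon_\p(4a)\equiv 0\pmod 6$). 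The dimension bounds then follow from Prop.~\ref{boundsgen}, which gives $\dim_{\F_3}M(S_a,a)=h^3_{S_a(L)}$, $\dim_{\F_3}N(S_a,a)=h^3_{S_a(L)}+|S_a(L)|+2$, and $\dim_{\F_3}N'(S_a,a)=|S_a|+1$.

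The only genuinely delicate step is the prime $\p\mid 3$, where the local Kummer map is much harder to pin down by explicit generators than at the other primes. The groups $V_3/A_\p^{*3}$ and $(A_\p^*/A_\p^{*3})_{N=1}$ introduced in \S\ref{ant} are precisely the sandwich that replaces $\mathrm{Im}(\delta_{\phi,K_\p})$ from below and above; once Prop.~\ref{V3fortype1}, Lemma~\ref{propos5.5} and Prop.~\ref{propos5.7} are granted, the $\p$-adic condition fits in cleanly and the argument becomes a straightforward case-by-case bookkeeping exercise.
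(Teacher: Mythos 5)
Your proposal is correct and follows essentially the same route as the paper: it reduces both containments to the prime-by-prime comparison of ${\rm Im}(\delta_{\phi,K_\q})$ with $(A_\q^*/A_\q^{*3})_{N=1}$ and $V_3/A_\p^{*3}$ via Prop.~\ref{newM1M2sel}, invoking exactly Corollary~\ref{kumcharnot3}, Prop.~\ref{kumcharnot3sqr}(1), Lemma~\ref{propos5.5}, Prop.~\ref{propos5.7}(1), Remark~\ref{anotsqV3} and Prop.~\ref{V3fortype1}, and then passes to dimensions by Prop.~\ref{boundsgen}. Nothing essential is missing.
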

\begin{proof}
First we consider the case  \begin{small}$a \notin K^{*2}$\end{small} and show that \begin{small}${\rm Sel}^\phi(E_{a}/K) \subset N(S_a,a)$.\end{small} By Prop. \ref{newM1M2sel}, it is enough to show that \begin{small}$\text{Im } \delta_{\phi,K_\q} \subset (A_\q^*/A_\q^{*3})_{N=1}$\end{small} for all \begin{small}$\q \notin S_a$.\end{small} This is clear from the local theory (at \begin{small}$\q \notin S_a$\end{small}) of the elliptic curve \begin{small}$E_{a}$\end{small} (Corollary  \ref{kumcharnot3}, Prop. \ref{kumcharnot3sqr}(1) for \begin{small}$\q \nmid 3$\end{small} and Lemma \ref{kumchar3notsqr}, Prop. \ref{propos5.7}(1) for \begin{small}$\p \mid 3$\end{small}). 

Next, we prove that \begin{small}$M(S_a,a) \subset {\rm Sel}^\phi(E_{a}/K)$,\end{small} when \begin{small}$a \notin K^{*2}$.\end{small} From Prop. \ref{newM1M2sel}, it suffices to show that \begin{small}$(A_\q^*/A_\q^{*3})_{N=1} \subset \text{Im }\delta_{\phi, K_\q}$\end{small} for \begin{small}$\q \notin S_a \cup \{\p\}$\end{small} and \begin{small}$V_3/A_\p^{*3} \subset \text{Im }\delta_{\phi, K_\p}$\end{small} for \begin{small}$\p \notin S_a$.\end{small} These results follow from Corollary \ref{kumcharnot3}, Prop. \ref{kumcharnot3sqr}(1) for \begin{small}$\q \nmid 3$\end{small} and  Remark \ref{anotsqV3}, Prop. \ref{V3fortype1} for \begin{small}$\p \mid 3$.\end{small} 

When \begin{small}$a \in K^{*2}$,\end{small} due to Prop. \ref{newM1M2sel}, \begin{small}${\rm Sel}^\phi(E_a/K) \subset N'(S_a,a)$\end{small} can be obtained from Corollary \ref{kumcharnot3}, Prop. \ref{kumcharnot3sqr}(1) for \begin{small}$\q \nmid 3$\end{small} and Lemma \ref{kumchar3notsqr}, Prop. \ref{propos5.7}(1) for \begin{small}$\p \mid 3$.\end{small} 

In both cases (1) and (2), the statements about ranks follow directly from Prop. \ref{boundsgen}.
This completes the proof of the theorem.
\end{proof}

\subsubsection{{\bf Selmer Group over $\Q$}}
Recall from \S\ref{iso}, we have defined rational $3$-isogenies \begin{small}$E_a \underset{\widehat{\phi_a}}{\overset{\phi_a}{\rightleftarrows}} \widehat{E}_a$\end{small} with \begin{small}$a$\end{small} sixth-power free in \begin{small}$\Q$.\end{small} 
Further recall that we have the corresponding quadratic {\'e}tale algebras   \begin{small}$\widetilde{\Q}_{\widehat{\phi}_a}$, $\widetilde{\Q}_{{\phi}_a}$\end{small}  over \begin{small}$\Q$.\end{small}

\begin{defn}\label{defofSaQ}
We define \begin{small}$S_a(\Q):=\big\{ \ell \in \Z \setminus \{3\}  \mid  -3a \in \Q_\ell^{*2}  \text{ and }  \upsilon_\ell(4a) \not\equiv 0 \pmod 6 \big\} \ \bigcup \ T_3(\Q)$,\\
 where $T_3(\Q):=\begin{cases} \{3\}, & \text{ if } -3a \in \Q_3^{*2} \text{ and } \upsilon_3(a)=1 \text{ or } 5, \\ \emptyset, & \text{ otherwise.} \end{cases}$ 
\end{small}
\end{defn}
\begin{defn}
(1) If \begin{small}$\widetilde{\Q}_{\widehat{\phi}_a}$\end{small} is a field i.e. $-3a \notin \Q^{*2}$, put  \begin{small}$S_a(\widetilde{\Q}_{\widehat{\phi}_a}):=\big\{\mathfrak{l} \in \Sigma_{\widetilde{\Q}_{\widehat{\phi}_a}} \mid \mathfrak{l} \cap \Q \in S_a(\Q)\big\}$\end{small}  
\begin{small}
$$\text{and define } N(S_a(\Q),a):=\big\{\overline{x} \in \big(\widetilde{\Q}_{\widehat{\phi}_a}^*/\widetilde{\Q}_{\widehat{\phi}_a}^{*3}\big)_{N=1} \mid (x)=I^3 \text{ for some fractional ideal } I \text{ of } \OO_{S_a(\widetilde{\Q}_{\widehat{\phi}_a})}\big\}.$$\end{small}
(2) If \begin{small}$\widetilde{\Q}_{\widehat{\phi}_a} \cong \Q \times \Q$\end{small} i.e. \begin{small}$-3a \in \Q^{*2}$,\end{small} define \begin{small}$$N'(S_a(\Q),a):=\big\{(\overline{x}_1,\overline{x}_2) \in \big(\widetilde{\Q}_{\widehat{\phi}_a}^*/\widetilde{\Q}_{\widehat{\phi}_a}^{*3}\big)_{N=1} \mid \upsilon_\ell(\overline{x}_1,\overline{x}_2) \equiv 0 \pmod 3, \ \forall \ell \notin S_a(\Q)\big\}.$$\end{small}
\end{defn}
Let $r_1$, $r_2$ be the number of real and conjugate pairs of complex embeddings of \begin{small}$\widetilde{\Q}_{\widehat{\phi}_a}$,\end{small} respectively. Then similar to Prop. \ref{boundsgen}, we have the following result.
\begin{proposition}
\begin{small}$(1) \dim_{\F_3} N(S_a(\Q),a) = h^3_{S_a(\widetilde{\Q}_{\widehat{\phi}_a})}+|S_a(\widetilde{\Q}_{\widehat{\phi}_a})|+t, \text{ where } t=\begin{cases} 0, & \text{ if } r_2=1 \text{ and } \widetilde{\Q}_{\widehat{\phi}_a} \neq K, \\ 1, & \text{ if either } r_1=2 \text{ or } \widetilde{\Q}_{\widehat{\phi}_a} = K. \end{cases}$\end{small}\\
\begin{small}$(2)\dim_{\F_3} N'(S_a(\Q),a)=|S_a(\Q)|$.\end{small} \qed
\end{proposition}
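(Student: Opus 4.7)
The strategy is to mirror the proof of Proposition \ref{boundsgen}, since $N(S_a(\Q),a)$ and $N'(S_a(\Q),a)$ are defined in direct analogy with $N(S,a)$ and $N'(S,a)$, and nowhere did that proof rely on the hypothesis $\mu_3 \subset L$ (which only intervenes when treating $M(S,a)$ via Kummer theory). The only genuinely new input is Dirichlet's $S$-units theorem applied over $\widetilde{\Q}_{\widehat{\phi}_a}$ rather than over $L$.

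For part (1), set $F := \widetilde{\Q}_{\widehat{\phi}_a}$ and $S(F) := S_a(F)$, and define
$$\rho : N(S_a(\Q),a) \longrightarrow Cl_{S(F)}(F)[3], \qquad \bar{x} \mapsto [I] \text{ where } (x) = I^3.$$
The arguments for well-definedness, surjectivity, and the identification $\ker \rho = \OO_{S(F)}^*/\OO_{S(F)}^{*3}$ use only unique factorization of fractional ideals in the Dedekind domain $\OO_{S(F)}$, so they transfer verbatim from Proposition \ref{boundsgen}. Consequently
$$\dim_{\F_3} N(S_a(\Q),a) = h^3_{S(F)} + \dim_{\F_3} \OO_{S(F)}^*/\OO_{S(F)}^{*3}.$$
Dirichlet's theorem gives $\OO_{S(F)}^* \cong \mu(F) \times \Z^{r_1+r_2-1+|S(F)|}$, so the second term equals $r_1+r_2-1+|S(F)|+\dim_{\F_3} \mu(F)[3]$. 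A short case-check over the three possibilities for a quadratic field $F$ isolates the value of $t$: for imaginary quadratic $F \neq K$ one has $(r_1,r_2,\dim_{\F_3} \mu(F)[3]) = (0,1,0)$, contributing $0$; for $F = K$ one has $(0,1,1)$, contributing $1$; and for real quadratic $F$ one has $(2,0,0)$, again contributing $1$. This matches the piecewise definition of $t$.

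For part (2), when $\widetilde{\Q}_{\widehat{\phi}_a} \cong \Q \times \Q$, the valuation condition $v_\ell(\bar{x}_1,\bar{x}_2) \equiv (0,0) \pmod 3$ for every $\ell \notin S_a(\Q)$ identifies $N'(S_a(\Q),a)$ with $\bigl(\OO_{\Q,S_a(\Q)}^*/\OO_{\Q,S_a(\Q)}^{*3}\bigr)^2_{N=1}$, exactly as in the closing step of Proposition \ref{boundsgen}; the map from $S_a(\Q)$-units is injective because an $S_a(\Q)$-unit that is a rational cube is already a cube of an $S_a(\Q)$-unit. Multiplication is surjective on this $\F_3$-vector space, so its norm-one kernel has dimension equal to $\dim_{\F_3} \OO_{\Q,S_a(\Q)}^*/\OO_{\Q,S_a(\Q)}^{*3}$. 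For $\Q$ one has $(r_1,r_2,\dim_{\F_3} \mu(\Q)[3]) = (1,0,0)$, so Dirichlet's $S$-units theorem yields the asserted value $|S_a(\Q)|$. The proof presents no real obstacle; the only care needed is in the four-way case-split on the signature of $\widetilde{\Q}_{\widehat{\phi}_a}$ and whether it contains $\mu_3$, which is precisely what the parameter $t$ records.
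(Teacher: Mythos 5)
Your argument is exactly the route the paper intends: the paper offers no proof of this proposition beyond the phrase ``similar to Prop.~\ref{boundsgen}'', and your transfer of the map $\rho$ to $Cl_{S_a(\widetilde{\Q}_{\widehat{\phi}_a})}(\widetilde{\Q}_{\widehat{\phi}_a})[3]$ together with the Dirichlet $S$-unit case check on the signature (which is precisely what the parameter $t$ encodes) is the intended computation; part (2) likewise matches the closing step of Prop.~\ref{boundsgen}.

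One point deserves flagging, though it originates in the paper rather than in your write-up. Unlike Definition~\ref{defofM1M2}, the definition of $N(S_a(\Q),a)$ carries the norm-one condition $\overline{x}\in\big(\widetilde{\Q}_{\widehat{\phi}_a}^*/\widetilde{\Q}_{\widehat{\phi}_a}^{*3}\big)_{N=1}$, so the kernel identification $\ker\rho=\OO_{S(F)}^*/\OO_{S(F)}^{*3}$ does \emph{not} transfer verbatim: every $\ell\in S_a(\Q)$ splits in $F=\widetilde{\Q}_{\widehat{\phi}_a}$ (this is forced by $-3a\in\Q_\ell^{*2}$), hence $\ell$ itself is an $S_a(F)$-unit lying in your kernel, yet $N_{F/\Q}(\ell)=\ell^2\notin\Q^{*3}$. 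If the norm-one condition is enforced, the kernel is only the norm-one part of the $S_a(F)$-units, the image of the norm on that unit group has dimension $|S_a(\Q)|$, and the resulting dimension is $h^3_{S_a(F)}+|S_a(\Q)|+t$, i.e.\ smaller than the stated formula by $|S_a(\Q)|$ (recall $|S_a(F)|=2|S_a(\Q)|$). Your count, like the proposition as printed, is the dimension of the group defined \emph{without} the norm condition, in exact parallel with Definition~\ref{defofM1M2}; this is evidently what is meant, and it is harmless for the only later use, namely the upper bound on ${\rm Sel}^{\phi_a}(E_a/\Q)$ in Theorem~\ref{boundsoverQ}, since the Selmer group sits inside the norm-one part anyway. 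It is worth noting that in part (2) you do impose the norm-one condition (cutting $2|S_a(\Q)|$ down to $|S_a(\Q)|$), which is why no such discrepancy arises there; a sentence acknowledging that in part (1) you are computing the unconstrained group would make the proof airtight.
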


For every rational prime $\ell$, we define the sets \begin{small}$\big(A_{\ell}^*/A_{\ell}^{*3}\big)_{N=1}$\end{small} similar to the sets \begin{small}$\big(A_{\q}^*/A_{\q}^{*3}\big)_{N=1}$\end{small} for $\q \in \Sigma_K$. Then doing the local theory as in the subsection \S\ref{type1theory} and following the proof for the upper bound in  Theorem \ref{type1selmer}, we arrive at the theorem below:

\begin{theorem}\label{boundsoverQ}
Let \begin{small}$E_a$, $\widehat{E}_a$, $\phi_a$, $\widehat{\phi}_a$\end{small}  and \begin{small}$\widetilde{\Q}_{\widehat{\phi}_a}$\end{small} be as above. 
Then \begin{enumerate}
    \item If \begin{small}$\widetilde{\Q}_{\widehat{\phi}_a} $\end{small} is an imaginary quadratic field other than \begin{small}$K$,\end{small} then \begin{small}$\dim_{\F_3}{\rm Sel}^{\phi_a}(E_a/\Q) \le $ $ h^3_{S_a(\widetilde{\Q}_{\widehat{\phi}_a})}+|S_a(\widetilde{\Q}_{\widehat{\phi}_a})|.$\end{small}
    \item If \begin{small}$\widetilde{\Q}_{\widehat{\phi}_a}$\end{small} is either $K$ or a real quadratic field, then \begin{small}$\dim_{\F_3}{\rm Sel}^{\phi_a}(E_a/\Q) \le h^3_{S_a(\widetilde{\Q}_{\widehat{\phi}_a})}+|S_a(\widetilde{\Q}_{\widehat{\phi}_a})|+1.$\end{small}
    \item If \begin{small}$\widetilde{\Q}_{\widehat{\phi}_a} \cong \Q \times \Q$,\end{small} then \begin{small}$\dim_{\F_3}{\rm Sel}^{\phi_a}(E_a/\Q) \le |S_a(\Q)|$.\end{small} 
\end{enumerate}
Replacing \begin{small}$\widehat{\phi}_a$\end{small} by $\phi_a$ and $a$ by $a\alpha^2$, we get the corresponding bounds for \begin{small}$\dim_{\F_3}{\rm Sel}^{\widehat{\phi}_a}(\widehat{E}_a/\Q)$.\end{small} \qed
\end{theorem}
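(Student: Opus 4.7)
The plan is to mirror the proof of Theorem \ref{type1selmer} over $\Q$ in place of $K$. By Proposition \ref{cohom}, $H^1(G_\Q, E_a[\phi_a]) \cong (\widetilde{\Q}_{\widehat{\phi}_a}^*/\widetilde{\Q}_{\widehat{\phi}_a}^{*3})_{N=1}$ and analogously at each completion $\Q_\ell$. The set $S_a(\Q)$ defined in \eqref{eq:defofSaQ} plays the role that $S_a$ played before: a prime $\ell\ne 3$ is in $S_a(\Q)$ precisely when $\widetilde{\Q}_{\widehat{\phi}_a}$ splits at $\ell$ (that is, $-3a\in\Q_\ell^{*2}$) and $v_\ell(4a)\not\equiv 0\pmod 6$, while $T_3(\Q)$ captures the corresponding exceptional behaviour at $\ell=3$. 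Under this dictionary, the goal is to establish ${\rm Sel}^{\phi_a}(E_a/\Q) \subseteq N(S_a(\Q),a)$ in cases (1) and (2), and ${\rm Sel}^{\phi_a}(E_a/\Q) \subseteq N'(S_a(\Q),a)$ in case (3).

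First I would rerun the local computations of \S\ref{type1theory}, replacing $(K_\q, L_\q, A_\q)$ by $(\Q_\ell, \widetilde{\Q}_{\widehat{\phi}_a}\otimes_\Q\Q_\ell, A_\ell)$, where $A_\ell$ denotes the integral closure. Proposition \ref{type1} applies verbatim because the formulas for the Kummer map are valid over any characteristic-zero base. Lemma \ref{goodreduction} carries over unchanged, and the analogs of Lemma \ref{charnot3}, Corollary \ref{kumcharnot3} and Proposition \ref{kumcharnot3sqr}(1) follow by essentially the same arguments to show that the Kummer image lies in $(A_\ell^*/A_\ell^{*3})_{N=1}$ whenever $\ell\ne 3$ and $\ell\notin S_a(\Q)$. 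For $\ell=3$ I would compute $\phi_a'(0)$ via the formal group as in \eqref{eq:formlgrp} and combine Schaefer's formula \eqref{eq:schaeferfor} with Tate's algorithm applied to the minimal model of $E_a$ over $\Z_3$ to delineate exactly when the local Kummer image fails to be unramified; the outcome is that this failure occurs precisely in the cases collected into $T_3(\Q)$.

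With these local containments in place, the dimension bound follows from the $\Q$-analog of Proposition \ref{boundsgen}. In cases (1) and (2), the same class field theory argument produces a surjection
\[
\rho \colon N(S_a(\Q),a) \twoheadrightarrow Cl_{S_a(\widetilde{\Q}_{\widehat{\phi}_a})}(\widetilde{\Q}_{\widehat{\phi}_a})[3], \qquad \Ker(\rho) \cong \OO^*_{S_a(\widetilde{\Q}_{\widehat{\phi}_a})}/\OO^{*3}_{S_a(\widetilde{\Q}_{\widehat{\phi}_a})}.
\]
Dirichlet's $S$-unit theorem then gives $\dim_{\F_3}\Ker(\rho) = r_1+r_2-1+|S_a(\widetilde{\Q}_{\widehat{\phi}_a})|+\dim_{\F_3}\mu_3(\widetilde{\Q}_{\widehat{\phi}_a})$, which evaluates to $|S_a(\widetilde{\Q}_{\widehat{\phi}_a})|$ when $\widetilde{\Q}_{\widehat{\phi}_a}$ is imaginary quadratic and distinct from $K$, and to $|S_a(\widetilde{\Q}_{\widehat{\phi}_a})|+1$ when $\widetilde{\Q}_{\widehat{\phi}_a}=K$ (gain from $\mu_3\subset K$) or when $\widetilde{\Q}_{\widehat{\phi}_a}$ is real quadratic (gain from the extra fundamental unit). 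Case (3) reduces to the $N'$-computation of Proposition \ref{boundsgen}, yielding $\dim_{\F_3}N'(S_a(\Q),a)=|S_a(\Q)|$. The corresponding bounds for ${\rm Sel}^{\widehat{\phi}_a}(\widehat{E}_a/\Q)$ follow by the symmetric argument after replacing $(E_a,\phi_a,a)$ by $(\widehat{E}_a,\widehat{\phi}_a,a\alpha^2)$ and noting that $\widetilde{\Q}_{\phi_a}=\Q[x]/(x^2-a)$ is the relevant \'etale algebra.

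The main obstacle is the careful bookkeeping at $\ell=3$: one must check that Schaefer's formula combined with Tamagawa-number data from Tate's algorithm yields exactly the numerical cut-off $v_3(a)\in\{1,5\}$ combined with $-3a\in\Q_3^{*2}$ that defines $T_3(\Q)$; this parallels Remark \ref{numerical1}, but now the residue-degree and ramification behaviour of $\widetilde{\Q}_{\widehat{\phi}_a}\otimes_\Q\Q_3$ must be tracked separately from the $K_\p$ analysis, since $\p\mid 3$ in $K$ but $3$ may split, ramify, or remain inert in $\widetilde{\Q}_{\widehat{\phi}_a}$. No lower bound of the form $h^3_{S_a(\widetilde{\Q}_{\widehat{\phi}_a})}\le\dim_{\F_3}{\rm Sel}^{\phi_a}(E_a/\Q)$ is asserted because $\mu_3\not\subset\Q$, and consequently the Kummer-theoretic construction that produced the lower bound in Theorem \ref{type1selmer} does not have a direct $\Q$-analog.
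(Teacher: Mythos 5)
Your proposal is correct and takes essentially the same route as the paper: the paper obtains Theorem~\ref{boundsoverQ} precisely by redoing the local theory of \S\ref{type1theory} over $\Q_\ell$ (with $T_3(\Q)$ absorbing the exceptional behaviour at $3$) and invoking the $\Q$-analogue of Prop.~\ref{boundsgen}, whose Dirichlet $S$-unit computation---distinguishing imaginary quadratic $\neq K$, the cases $K$/real quadratic, and the split case---yields exactly your three upper bounds. Your remark that no lower bound is available over $\Q$ because $\mu_3 \not\subset \Q$ likewise matches the paper's own discussion.
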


\begin{rem}\label{compi1}
In \cite{ban2}, the author computes an upper bound on \begin{small}${\rm Sel}^{\phi_a}(E_a/\Q)$\end{small} using a finite set $S'_1(\Q)$ of $\Q$. It seems \cite{ban2} only considers the case when $\Q(\sqrt{-3a})$ is a field, although it is not mentioned explicitly. 
Note that Selmer groups over $K$ and lower bounds on them are not considered in \cite{ban2}.
When \begin{small}$\Q(\sqrt{-3a})$\end{small} is a field, our set $S_a(\Q)$ and the upper bound for \begin{small}${\rm Sel}^{\phi_a}(E_a/\Q)$\end{small} coincide with that of \cite{ban2}. We bypass the   computations for generators of the image of the Kummer map using explicit divisors on $\widehat{E}_a$. Instead, we use the group \begin{small}$\big(A_\ell^*/A_\ell^{*3}\big)_{N=1}$.\end{small} 
\end{rem}
The elliptic curve \begin{small}$\widehat{E}_a$\end{small} is a quadratic twist by $-3$ or $-1/3$ of  \begin{small}$E_a$\end{small} and so they are isomorphic over \begin{small}$K$.\end{small} Let \begin{small}$\text{rk } E(F)$\end{small} denote the Mordell-Weil rank of the curve $E$ over the field $F$. The result below is easy to deduce (cf. \cite[Lemma~3.1]{op}):
\begin{lemma}\label{ono}
Let \begin{small}$E_a$, $\widehat{E}_a$\end{small} and \begin{small}$\phi_a$, $\widehat{\phi}_a$\end{small} be as defined in \S\ref{iso}. Then 
\begin{enumerate}
	\item \begin{small}$\text{rk } E_a(K)=\text{ rk } E_a(\Q)+ \text{ rk } \widehat{E}_a(\Q)=2 \text{ rk } E_a(\Q)=2 \text{ rk } \widehat{E}_a(\Q)$.\end{small}
	\item \begin{small}${\rm Sel}^3(E_a/K) \cong {\rm Sel}^{3}(E_a/\Q) \oplus {\rm Sel}^{3}(\widehat{E}_a/\Q)$.\end{small} So, \begin{small}$\dim_{\F_3} {\rm Sel}^3(E_a/K)= \dim_{\F_3} {\rm Sel}^3(E_a/\Q)+\dim_{\F_3} {\rm Sel}^{3}(\widehat{E}_a/\Q)$.\end{small}
	\item \begin{small}${\rm Sel}^\phi(E_a/K) \cong {\rm Sel}^{\phi_a}(E_a/\Q) \oplus {\rm Sel}^{\widehat{\phi}_a}(\widehat{E}_a/\Q)$.\end{small} So, \begin{small}$\dim_{\F_3} {\rm Sel}^\phi(E_a/K)= \dim_{\F_3} {\rm Sel}^{\phi_a}(E_a/\Q)+\dim_{\F_3} {\rm Sel}^{\widehat{\phi}_a}(\widehat{E}_a/\Q)$.\end{small}
	\item \begin{small}$\dim_{\F_3} \Sh(E_a/K)[3]= \dim_{\F_3} \Sh(E_a/\Q)[3]+\dim_{\F_3} \Sh(\widehat{E}_a/\Q)[3]$.\end{small} \qed
\end{enumerate}
\end{lemma}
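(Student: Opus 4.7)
The plan is to exploit the fact that $\widehat{E}_a$ is the quadratic twist of $E_a$ by $K/\Q$, with explicit isomorphism $\iota: E_a \isomto \widehat{E}_a$ over $K$ given by $(X,Y) \mapsto (\p^2 X, \p^3 Y)$ (see \S\ref{iso}). Letting $\sigma$ denote the nontrivial element of $\Gal(K/\Q)$ (complex conjugation), a short computation using $\sigma(\p) = \bar{\p}$ and $\bar{\p}/\p = -\zeta^2$ shows that $\iota$ intertwines the two $\sigma$-actions up to the standard involution $[-1]$ on the target. Consequently $\widehat{E}_a(\Q) = \widehat{E}_a(K)^{\sigma}$ pulls back via $\iota^{-1}$ to the $-1$-eigenspace $E_a(K)^{-} = \{P \in E_a(K) : \sigma(P) = -P\}$. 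Since $\#\Gal(K/\Q) = 2$ is invertible in every module of interest here (either $\Q$-vector spaces or $\F_3$-vector spaces), we get canonical decompositions $M = M^{+} \oplus M^{-}$.

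For part (1), I would apply this decomposition to $E_a(K) \otimes_{\Z} \Q$: the $+$-part is $E_a(\Q) \otimes \Q$ and the $-$-part is identified via $\iota$ with $\widehat{E}_a(\Q) \otimes \Q$, yielding $\text{rk}\, E_a(K) = \text{rk}\, E_a(\Q) + \text{rk}\, \widehat{E}_a(\Q)$; equality of the two ranks on the right is immediate from the $\Q$-rational isogeny $\phi_a$. For parts (2) and (3), I would apply the analogous decomposition to the $\F_3$-vector spaces ${\rm Sel}^3(E_a/K)$ and ${\rm Sel}^\phi(E_a/K)$. Inflation--restriction for $1 \to G_K \to G_\Q \to \Gal(K/\Q) \to 1$ with coefficients in $E_a[3]$ (respectively $E_a[\phi_a]$) gives a natural isomorphism $H^1(G_\Q, -) \isomto H^1(G_K, -)^{+}$, since the error terms $H^i(\Gal(K/\Q), -)$ for $i = 1, 2$ vanish (coefficients are $3$-torsion, group is $2$-torsion). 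Applying the same argument to $\widehat{E}_a$ and then transporting by the twist identifies $H^1(G_\Q, \widehat{E}_a[\widehat{\phi}_a]) \isomto H^1(G_K, E_a[\phi])^{-}$. To cut these cohomology isomorphisms down to Selmer groups, one verifies at each prime $\ell$ of $\Q$ that the local Kummer image over $\Q_\ell$ coincides with the $\Gal(K_\q/\Q_\ell)$-fixed subspace of the local Kummer image over $K_\q$ (for $\q \mid \ell$), via the analogous local inflation--restriction together with the commutative diagrams relating $\delta_{\varphi_a, \Q_\ell}$ to $\delta_{\phi, K_\q}$.

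Part (4) follows formally from parts (1)--(3) and the fundamental exact sequence \eqref{eq:defofsha}, once one notes that the eigenspace decomposition applied to $E_a(K)[3]$ yields $\dim_{\F_3} E_a(K)[3] = \dim_{\F_3} E_a(\Q)[3] + \dim_{\F_3} \widehat{E}_a(\Q)[3]$ by exactly the same mechanism. Taking $\F_3$-dimensions in \eqref{eq:defofsha} for $E_a/K$, $E_a/\Q$ and $\widehat{E}_a/\Q$ and using the additivity of ranks, $3$-torsion, and $3$-Selmer groups, then forces the desired additivity of $\dim_{\F_3}\Sh(E_a/K)[3]$. The hardest step, as in any quadratic twist identity for Selmer groups, is verifying the compatibility of local Selmer conditions with the eigenspace decomposition at primes of $\Q$ that are inert or ramified in $K$: this is the substantive content of \cite[Lemma~3.1]{op}, and does not follow from Hochschild--Serre alone, since it relies on the explicit local Kummer descriptions developed in \S\ref{type1theory}.
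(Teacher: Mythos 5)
Your argument is correct in substance, but note how it relates to the paper's proof: the paper disposes of (1), (2), (4) by citing the quadratic-twist decomposition of \cite[Lemma~3.1]{op} (and observes that (3) follows by the same argument), whereas you re-derive that decomposition from scratch via the $\Gal(K/\Q)$-eigenspace decomposition and inflation--restriction, using that $2$ is invertible on all the $3$-primary modules involved. This is essentially the standard proof of the cited lemma, so what your route buys is self-containedness (including the local compatibility of the Selmer conditions at inert and ramified primes, which you correctly single out as the substantive step, and the identification $\widehat{E}_a[\widehat{\phi}_a]\cong E_a[\phi_a]\otimes\chi_{K/\Q}$ needed for (3)); what it costs is length, and your part (4) is obtained formally from (1)--(3) together with additivity of $3$-torsion rather than directly from the citation --- both are fine. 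One small correction: with the paper's isomorphism $\iota(x,y)=(\p^2x,\p^3y)$ one has $\sigma(\p^2)=\zeta\p^2$ and $\sigma(\p^3)=-\p^3$, so $\sigma\circ\iota$ and $[-1]\circ\iota\circ\sigma$ differ by the order-$3$ automorphism $(x,y)\mapsto(\zeta x,y)$ of the target (which exists since $j=0$), not by the identity; the clean relation $\sigma\circ\iota=[-1]\circ\iota\circ\sigma$ holds if you instead scale by powers of $\sqrt{-3}=\zeta\p$, i.e.\ use the standard twisting isomorphism $(x,y)\mapsto(-3x,-3\sqrt{-3}\,y)$. Since the two isomorphisms differ only by an automorphism of $\widehat{E}_a$ defined over $K$, this does not affect the eigenspace decomposition or any of your conclusions, but the intertwining claim as you stated it for the specific $\iota$ of \S\ref{iso} is not literally true and should be adjusted.
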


\begin{rem}\label{rmkSel3overQ}
An upper bound on \begin{small}$\dim_{\F_3}{\rm Sel}^{3}(E_a/\Q)$\end{small} can be computed using \begin{small}$\dim_{\F_3}{\rm Sel}^{3}(E_a/\Q) \le \dim_{\F_3}{\rm Sel}^{\phi_a}(E_a/\Q) + \dim_{\F_3}{\rm Sel}^{\widehat{\phi_{a}}}(\widehat{E}_{a}/\Q)=$ $\dim_{\F_3} {\rm Sel}^\phi(E_a/K)$\end{small} and the upper bounds on these terms.
\end{rem}

\subsection{Refined Bounds for Selmer Group over $K$}\label{type1boundsec}
\begin{theorem}\label{type1bounds}
Suppose that \begin{small}$a \not\in K^{*2}$\end{small} and \begin{small}$a$\end{small} is sixth-power free in \begin{small}$K$.\end{small} Then we have \begin{small}$$h^3_{S_a(L)} \le \dim_{\F_3} {\rm Sel}^\phi(E_a/K) \le \text{min}\big\{h^3_{S_a(L)} + |S_a(L)| + 2,\quad h^3_{S_a(\widetilde{\Q}_{\widehat{\phi}_a})}+h^3_{S_{a\alpha^2}(\widetilde{\Q}_{{\phi}_a})}+|S_a(L)|+1\big\}.$$\end{small}
In particular, if \begin{small}$a \not\in K^{*2}$\end{small} and \begin{small}$S_a=\emptyset$,\end{small} then \begin{small}$\dim_{\F_3} {\rm Sel}^\phi(E_a/K) \in \big\{h^3_L, h^3_L+1 \big\}$\end{small} and it is uniquely determined by the root number of \begin{small}$E_a/\Q$.\end{small}
\end{theorem}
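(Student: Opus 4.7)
The strategy is to combine three earlier results: Theorem \ref{type1selmer}(1) gives the lower bound and the first form of the upper bound; Lemma \ref{ono}(3) together with Theorem \ref{boundsoverQ} gives the second form of the upper bound; and Lemma \ref{hergoltzresult} collapses the expressions when $S_a=\emptyset$. A parity argument then handles the final assertion about the root number.

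The lower bound $h^3_{S_a(L)}$ and the first upper bound $h^3_{S_a(L)}+|S_a(L)|+2$ are precisely Theorem \ref{type1selmer}(1). For the second upper bound, apply Lemma \ref{ono}(3) to split
\[
\dim_{\F_3}{\rm Sel}^\phi(E_a/K) = \dim_{\F_3}{\rm Sel}^{\phi_a}(E_a/\Q) + \dim_{\F_3}{\rm Sel}^{\widehat{\phi}_a}(\widehat{E}_a/\Q),
\]
and bound each summand via Theorem \ref{boundsoverQ}. Since $a\notin K^{*2}$, neither $\widetilde{\Q}_{\widehat{\phi}_a}=\Q(\sqrt{-3a})$ nor $\widetilde{\Q}_{\phi_a}=\Q(\sqrt{a})$ equals $\Q\times\Q$ or $K$; moreover, as $a\in\Q^*$ is real, exactly one of these two fields is real quadratic and the other imaginary quadratic. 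Hence precisely one of the two applications of Theorem \ref{boundsoverQ}(1),(2) contributes the extra $+1$, producing the claimed ``$+1$'' in the bound. To finish the second upper bound I would establish the combinatorial identity
\[
|S_a(\widetilde{\Q}_{\widehat{\phi}_a})|+|S_{a\alpha^2}(\widetilde{\Q}_{\phi_a})| = |S_a(L)|,
\]
by a case analysis in the biquadratic tower $L\supset K\supset\Q$: a prime $\q\in S_a$ splits in $L/K$ (since $a\in K_\q^{*2}$), and tracking each of its two upstairs primes down to the other two quadratic subfields, together with a verification that the defining square and valuation conditions transfer correctly, gives the required bijection.

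In the case $S_a=\emptyset$, all auxiliary $S$-sets vanish, so the first upper bound becomes $h^3_L+2$, and Lemma \ref{hergoltzresult} identifies the second as $h^3_{\widetilde{\Q}_{\widehat{\phi}_a}}+h^3_{\widetilde{\Q}_{\phi_a}}+1 = h^3_L+1$. Combined with the lower bound $h^3_L$, this pinches $\dim_{\F_3}{\rm Sel}^\phi(E_a/K)\in\{h^3_L,h^3_L+1\}$.

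The hard part is showing that $w(E_a/\Q)$ selects which of the two values occurs, and this will be the main obstacle. My plan is a parity chain. By Lemma \ref{ono}(1), $\text{rk}\,E_a(K)=2\,\text{rk}\,E_a(\Q)$ is even, and $a\notin K^{*2}$ forces $E_a(K)[3]=0$; the Cassels--Tate pairing (after factoring out the divisible part of $\Sh(E_a/K)[3^\infty]$) yields a parity statement for $\dim_{\F_3}{\rm Sel}^3(E_a/K)$. The general exact sequence relating ${\rm Sel}^\phi$, ${\rm Sel}^{\widehat\phi}$ and ${\rm Sel}^3$—whose kernel and cokernel terms come from $3$-torsion that vanishes under $E_a(K)[3]=0$—then converts this into a parity statement for $\dim_{\F_3}{\rm Sel}^\phi(E_a/K)$. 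Finally the $3$-parity theorem of Dokchitser--Dokchitser, $w(E_a/\Q)=(-1)^{\text{rk}\,E_a(\Q)}$, pins down this parity in terms of $w(E_a/\Q)$; as $h^3_L$ and $h^3_L+1$ have opposite parities, the value is uniquely determined. The careful parity bookkeeping—verifying that all extraneous torsion and local-correction contributions cancel under the hypothesis $a\notin K^{*2}$—is the technical heart of this final step.
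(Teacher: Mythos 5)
Your treatment of the inequalities and of the $S_a=\emptyset$ pinching is essentially the paper's own argument: Theorem \ref{type1selmer}(1) for the lower bound and the first upper bound, Lemma \ref{ono}(3) plus Theorem \ref{boundsoverQ} (with the observation that exactly one of $\widetilde{\Q}_{\widehat{\phi}_a}$, $\widetilde{\Q}_{\phi_a}$ is real and neither is $K$ nor $\Q\times\Q$) for the second upper bound, the counting identity $|S_a(L)|=|S_a(\widetilde{\Q}_{\widehat{\phi}_a})|+|S_{a\alpha^2}(\widetilde{\Q}_{\phi_a})|$, and Lemma \ref{hergoltzresult} when $S_a=\emptyset$. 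That part is fine.

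The root-number step, however, has a genuine gap, and it is exactly where you anticipated trouble. First, the over-$K$ sequence \eqref{eq:simplifiedseq} gives $\dim_{\F_3}{\rm Sel}^3(E_a/K)=2\dim_{\F_3}{\rm Sel}^\phi(E_a/K)-\dim_{\F_3}\frac{\Sh(E_a/K)[\phi]}{\phi(\Sh(E_a/K)[3])}$: since ${\rm Sel}^\phi(E_a/K)$ enters with coefficient $2$, no parity information about $\dim_{\F_3}{\rm Sel}^3(E_a/K)$ (however obtained from Cassels--Tate and the evenness of ${\rm rk}\,E_a(K)$) can ever detect $\dim_{\F_3}{\rm Sel}^\phi(E_a/K)\bmod 2$; working over $K$ the parity you want is simply invisible. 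The paper instead descends to $\Q$: by \cite[Lemma~6.1]{ss} applied to $\phi_a,\widehat{\phi}_a$ over $\Q$ together with Lemma \ref{ono}(3) (and $E_a(\Q)[\phi_a]=0$ since $a\notin\Q^{*2}$), one gets $\dim_{\F_3}{\rm Sel}^3(E_a/\Q)=\dim_{\F_3}{\rm Sel}^\phi(E_a/K)-\dim_{\F_3}\frac{\Sh(\widehat{E}_a/\Q)[\widehat{\phi}_a]}{\phi_a(\Sh(E_a/\Q)[3])}$, and the crucial input is that this last quantity is even by \cite[Prop.~42]{bes}; that evenness is the ingredient your chain is missing. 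Second, the statement you quote as the $3$-parity theorem, $\omega(E_a/\Q)=(-1)^{{\rm rk}\,E_a(\Q)}$, is the parity \emph{conjecture}, not the theorem of Nekov{\'a}{\v r} and Dokchitser--Dokchitser; what is known unconditionally is $\omega(E_a/\Q)=(-1)^{r}$ with $r$ the $\Z_3$-corank of ${\rm Sel}^{3^\infty}(E_a/\Q)$, and one then uses $\dim_{\F_3}{\rm Sel}^3(E_a/\Q)\equiv r\pmod 2$ (Cassels--Tate over $\Q$ plus $E_a(\Q)[3]=0$) to conclude $\omega(E_a/\Q)=(-1)^{\dim_{\F_3}{\rm Sel}^\phi(E_a/K)}$. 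With these two corrections --- replace the over-$K$ parity transfer by the over-$\Q$ one with the \cite{bes} evenness, and use the Selmer-corank form of $3$-parity --- the final assertion follows; as written, the argument does not close.
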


\begin{proof}
When \begin{small}$a \notin K^{*2}$,\end{small} then by Theorem \ref{type1selmer},  \begin{small}$h^3_{S_a(L)} \le \dim_{\F_3} {\rm Sel}^\phi(E_a/K)$.\end{small} 
Now as \begin{small}$a \notin K^{*2}$,\end{small} both \begin{small}$\widetilde{\Q}_{\widehat{\phi}_a}$\end{small} and \begin{small}$\widetilde{\Q}_{{\phi}_a}$\end{small} are quadratic fields and exactly one of them is an imaginary quadratic field (not isomorphic to $K$), while the other is a real quadratic field. Hence, by Theorem \ref{boundsoverQ}, 
\begin{small}\[\dim_{\F_3}{\rm Sel}^{\phi_a}(E_a/\Q) + \dim_{\F_3}{\rm Sel}^{\widehat{\phi}_a}(\widehat{E}_a/\Q) \le h^3_{S_a(\widetilde{\Q}_{\widehat{\phi}_a})}+h^3_{S_{a\alpha^2}(\widetilde{\Q}_{{\phi}_a})}+|S_a(\widetilde{\Q}_{\widehat{\phi}_a})|+|S_{a\alpha^2}(\widetilde{\Q}_{{\phi}_a})|+1.\]\end{small}
Note that \begin{small}${\rm Sel}^\phi(E_a/K) \cong {\rm Sel}^{\phi_a}(E_a/K) \cong {\rm Sel}^{\phi_a}(E_a/\Q) \oplus {\rm Sel}^{\widehat{\phi}_a}(\widehat{E}_a/\Q)$\end{small} (by Lemma \ref{ono}) and also that  \begin{small}$|S_a(L)|=|S_a(\widetilde{\Q}_{\widehat{\phi}_a})|+|S_{a\alpha^2}(\widetilde{\Q}_{{\phi}_a})|$.\end{small} Whence by Theorems \ref{type1selmer}, \ref{boundsoverQ} and Prop. \ref{boundsgen}, we have \begin{small}$$\dim_{\F_3} {\rm Sel}^\phi(E_a/K) \le \text{min }\big\{h^3_{S_a(L)} + |S_a(L)| + 2, \ h^3_{S_a(\widetilde{\Q}_{\widehat{\phi}_a})}+h^3_{S_{a\alpha^2}(\widetilde{\Q}_{{\phi}_a})}+|S_a(L)|+1\big\}.$$\end{small}
In particular, if \begin{small}$S_a=\emptyset$,\end{small} then it follows immediately from Lemma \ref{hergoltzresult}  that \begin{small}$\dim_{\F_3} {\rm Sel}^\phi(E_a/K) \in \big\{h^3_L, \  h^3_L+1\big\}$\end{small}. 

By \cite[Lemma~6.1]{ss}, we have the following exact sequence
\begin{small}\begin{equation}\label{eq:selmerseqoverQ}
0 \rightarrow \frac{\widehat{E}_a(\Q)[\widehat{\phi_a}]}{\phi_a(E_a(\Q)[3])} \rightarrow {\rm Sel}^{\phi_a}(E_a/\Q) \rightarrow {\rm Sel}^3(E_a/\Q) \rightarrow {\rm Sel}^{\widehat{\phi_a}}(\widehat{E}_a/\Q) \rightarrow \frac{\Sh(\widehat{E}_a/\Q)[\widehat{\phi_a}]}{\phi_a(\Sh(E_a/\Q)[3])} \rightarrow 0,\end{equation}\end{small}
and so \begin{small}$\dim_{\F_3} {\rm Sel}^3(E_a/\Q)= \dim_{\F_3} {\rm Sel}^\phi(E_a/K) - \dim_{\F_3} \frac{\Sh(\widehat{E}_a/\Q)[\widehat{\phi}_a]}{\phi_a(\Sh(E_a/\Q)[3])}$.\end{small} The last term is even by \cite[Prop.~49]{bes}. Hence, \begin{small}$\dim_{\F_3} {\rm Sel}^3(E_a/\Q) \equiv \dim_{\F_3} {\rm Sel}^\phi(E_a/K) \pmod 2$.\end{small} Now, the $3$-parity conjecture for \begin{small}$E_a/\Q$\end{small} is known due to Nekov{\'a}{\v r} and Dokchitser-Dokchitser. If we denote  \begin{small}$r:=\dim_{\Q_3} \text{Hom}_{\Z_3}\big({\rm Sel}^{3^\infty}(E_a/\Q), \Q_3/\Z_3\big) \otimes \Q_3$,\end{small} then the global root number \begin{small}$\omega(E_a/\Q)$\end{small} of \begin{small}$E_a/\Q$\end{small} satisfies \begin{small}$\omega(E_a/\Q)=(-1)^r$.\end{small} Also, it is easy to see that \begin{small}$\dim_{\F_3} {\rm Sel}^3(E_a/\Q) \equiv r \pmod 2.$\end{small} So, \begin{small}$\omega(E_a/\Q)=(-1)^{\dim_{\F_3} {\rm Sel}^\phi(E_a/K)}$\end{small}
and hence, \begin{small}$\dim_{\F_3}{\rm Sel}^\phi(E_a/K)$\end{small} is determined uniquely by \begin{small}$\omega(E_a/\Q).$\end{small}
\end{proof}

Theorems \ref{type1selmer} and \ref{type1bounds} have the following consequence on the $3$-Selmer group ${\rm Sel}^3(E_a/K)$:

\begin{corollary}\label{corforsel3}
If \begin{small}$a \notin K^{*2}$,\end{small} then we have that \begin{small}$$\text{max }\big\{h^3_{S_a(L)}, \text{rk }E_a(K)\big\} \le \dim_{\F_3}{\rm Sel}^{3}(E_a/K) \le 2 \ \text{min }\big\{h^3_{S_a(L)} + |S_a(L)| + 2, \ h^3_{S_a(\widetilde{\Q}_{\widehat{\phi}_a})}+h^3_{S_{a\alpha^2}(\widetilde{\Q}_{{\phi}_a})}+|S_a(L)|+1\big\}.$$\end{small}
In particular, if \begin{small}$a \notin K^{*2}$\end{small} and \begin{small}$S_a=\emptyset$,\end{small} then \begin{small}$\text{max } \big\{h^3_L, \text{ rk }E_a(K)\big\} \le \dim_{\F_3}{\rm Sel}^{3}(E_a/K) \le 2h^3_L + 2.$\end{small}

On the other hand, when \begin{small}$a \in K^{*2}$,\end{small} we have \begin{small}$\dim_{\F_3} {\rm Sel}^3(E_a/K) \leq 2|S_a|+2$\end{small} and hence \begin{small}$\text{rk } E_a(K) \le 2|S_a|$. \end{small}
\end{corollary}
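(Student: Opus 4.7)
The plan is to deduce the bounds on $\dim_{\F_3}{\rm Sel}^3(E_a/K)$ from those on $\dim_{\F_3}{\rm Sel}^\phi(E_a/K)$ already obtained in Theorems \ref{type1selmer} and \ref{type1bounds}, via the standard comparison between $\phi$-descent and $3$-descent Selmer groups.

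For the upper bound, Lemma \ref{ono}(2) gives the decomposition
\[
\dim_{\F_3}{\rm Sel}^3(E_a/K) = \dim_{\F_3}{\rm Sel}^3(E_a/\Q) + \dim_{\F_3}{\rm Sel}^3(\widehat{E}_a/\Q).
\]
Remark \ref{rmkSel3overQ} bounds the first summand by $\dim_{\F_3}{\rm Sel}^\phi(E_a/K)$. Applying the same remark to $\widehat{E}_a$ in place of $E_a$ and using Lemma \ref{ono}(3) to identify the resulting $\Q$-side Selmer sum with $\dim_{\F_3}{\rm Sel}^\phi(E_a/K)$ yields the same bound for the second summand. Thus $\dim_{\F_3}{\rm Sel}^3(E_a/K) \le 2\dim_{\F_3}{\rm Sel}^\phi(E_a/K)$. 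Substituting the minimum of the two upper bounds from Theorem \ref{type1bounds} when $a\notin K^{*2}$, or the bound $|S_a|+1$ from Theorem \ref{type1selmer}(2) when $a\in K^{*2}$, produces the asserted upper estimates; the $S_a=\emptyset$ specialisation is then automatic via Lemma \ref{hergoltzresult}.

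For the lower bound, the inequality $\text{rk }E_a(K) \le \dim_{\F_3}{\rm Sel}^3(E_a/K)$ is immediate from the fundamental exact sequence \eqref{eqi:defofsha}, since $\dim_{\F_3} E_a(K)/3E_a(K) \ge \text{rk }E_a(K)$. For $h^3_{S_a(L)} \le \dim_{\F_3}{\rm Sel}^3(E_a/K)$ in the case $a\notin K^{*2}$, I would invoke the descent sequence
\[
0 \longrightarrow \frac{E_a(K)[\widehat{\phi}]}{\phi(E_a(K)[3])} \longrightarrow {\rm Sel}^\phi(E_a/K) \longrightarrow {\rm Sel}^3(E_a/K)
\]
arising from $0 \to E_a[\phi] \to E_a[3] \xrightarrow{\phi} E_a[\widehat{\phi}] \to 0$. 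The Weil pairing $E_a[\phi] \otimes E_a[\widehat{\phi}] \to \mu_3$ is $G_K$-equivariant, and since $\mu_3 \subset K$, it furnishes an isomorphism $E_a[\widehat{\phi}] \cong E_a[\phi]^{\vee} \cong E_a[\phi]$ of $G_K$-modules (the second isomorphism because the character giving the $G_K$-action on $E_a[\phi]$ takes values in $\{\pm 1\} \subset \F_3^{\ast}$ and is self-inverse). Hence $E_a[\widehat{\phi}](K) = 0$ precisely when $a\notin K^{*2}$, the leftmost term in the descent sequence vanishes, and ${\rm Sel}^\phi(E_a/K) \hookrightarrow {\rm Sel}^3(E_a/K)$. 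Combining with the lower bound $h^3_{S_a(L)} \le \dim_{\F_3}{\rm Sel}^\phi(E_a/K)$ of Theorem \ref{type1selmer}(1) completes the argument.

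The one point requiring care is checking that the induced map ${\rm Sel}^\phi(E_a/K) \to {\rm Sel}^3(E_a/K)$ coming from $E_a[\phi] \hookrightarrow E_a[3]$ truly respects the local Kummer conditions at every place. This follows directly from the identity $\widehat{\phi}\circ\phi = [3]$: if $P = \phi(Q) \in E_a(K_v)$, then $\widehat{\phi}(P) = 3Q \in E_a(K_v)$, and the cocycle $\sigma \mapsto \sigma(Q) - Q$ representing $\delta_{\phi,K_v}(P)$ in $H^1(G_{K_v}, E_a[\phi])$ coincides with $\delta_{3,K_v}(\widehat{\phi}(P))$ when viewed in $H^1(G_{K_v}, E_a[3])$, so Selmer classes are carried to Selmer classes.
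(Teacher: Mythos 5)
Your argument is correct, and it reaches the stated bounds by a route that differs in its mechanics from the paper's, though the overall strategy (reduce everything to the $\phi$-Selmer bounds of Theorems \ref{type1selmer} and \ref{type1bounds}) is the same. The paper does everything over $K$ in one stroke: it quotes the five-term exact sequence \eqref{eq:selmerseq} of Schaefer--Stoll, notes $E_a(K)[\phi]=0$ when $a\notin K^{*2}$ so that ${\rm Sel}^\phi(E_a/K)\hookrightarrow {\rm Sel}^3(E_a/K)$, and reads off $\dim_{\F_3}{\rm Sel}^3(E_a/K)=2\dim_{\F_3}{\rm Sel}^\phi(E_a/K)-\dim_{\F_3}\frac{\Sh(E_a/K)[\phi]}{\phi(\Sh(E_a/K)[3])}\le 2\dim_{\F_3}{\rm Sel}^\phi(E_a/K)$ (with the extra torsion term when $a\in K^{*2}$). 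You instead obtain the factor-of-two upper bound by descending to $\Q$: Lemma \ref{ono}(2) splits ${\rm Sel}^3(E_a/K)$ into the two $\Q$-Selmer groups, Remark \ref{rmkSel3overQ} (applied to both $E_a$ and $\widehat E_a=E_{a\alpha^2}$) bounds each summand by $\dim_{\F_3}{\rm Sel}^\phi(E_a/K)$ via Lemma \ref{ono}(3); this avoids having to identify ${\rm Sel}^{\widehat\phi}$ with ${\rm Sel}^{\phi}$ over $K$, at the cost of hiding the explicit Sha-defect term that the paper's equality makes visible (and which it reuses in Corollary \ref{cortotype1bounds}). For the lower bound you re-derive, rather than cite, the relevant portion of \eqref{eq:selmerseq}: the short exact sequence $0\to E_a[\phi]\to E_a[3]\to E_a[\widehat\phi]\to 0$, the Weil-pairing/self-duality argument showing $E_a[\widehat\phi](K)=0$ exactly when $a\notin K^{*2}$ (consistent with the explicit computation that both kernels are generated by $(0,\pm\sqrt a)$ under the $K$-identification), and the check via $\widehat\phi\circ\phi=[3]$ that local $\phi$-Kummer images map into local $3$-Kummer images. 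Two cosmetic points: the preimage $Q$ of $P$ under $\phi$ lives only in $E_a(\overline{K_v})$, not $E_a(K_v)$, so the phrase ``$P=\phi(Q)\in E_a(K_v)$'' should be read as a condition on $P$ alone; and the leftmost term of your displayed sequence is the image of $E_a[\widehat\phi](K)$ under the connecting map, which is how you in fact use it. Neither affects the validity of the proof.
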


\begin{proof}
As \begin{small}$\frac{E_a(K)}{3E_a(K)} \hookrightarrow {\rm Sel}^3(E_a/K)$,\end{small} we know \begin{small}$\text{rk } E_a(K) \leq \dim_{\F_3} {\rm Sel}^3(E_a/K)$.\end{small}
We have the following exact sequence (see for example \cite[Lemma~6.1]{ss})
\begin{small}
\begin{equation}\label{eq:selmerseq}
0 \rightarrow \frac{E_a(K)[\phi]}{\phi(E_a(K)[3])} \rightarrow {\rm Sel}^\phi(E_a/K) \rightarrow {\rm Sel}^3(E_a/K) \rightarrow {\rm Sel}^\phi(E_a/K) \rightarrow \frac{\Sh(E_a/K)[\phi]}{\phi(\Sh(E_a/K)[3])} \rightarrow 0.
\end{equation}
\end{small}
Note that if $a \notin K^{*2}$, then $E_a(K)[\phi]=\{O\}$, hence, equation \eqref{eq:selmerseq} becomes
\begin{small}
\begin{equation}\label{eq:simplifiedseq}
0 \rightarrow {\rm Sel}^\phi(E_a/K) \rightarrow {\rm Sel}^3(E_a/K) \rightarrow {\rm Sel}^\phi(E_a/K) \rightarrow \frac{\Sh(E_a/K)[\phi]}{\phi(\Sh(E_a/K)[3])} \rightarrow 0
\end{equation}
\end{small}
Thus, \begin{small}
$h^3_{S_a(L)} \le \dim_{\F_3} {\rm Sel}^\phi(E_a/K) \leq \dim_{\F_3} {\rm Sel}^3(E_a/K)= 2 \dim_{\F_3} {\rm Sel}^\phi(E_a/K) - \dim_{\F_3} \frac{\Sh(E_a/K)[\phi]}{\phi(\Sh(E_a/K)[3])} $ $\leq 2 \dim_{\F_3} {\rm Sel}^\phi(E_a/K) \le 2\ \text{min}\big\{ h^3_{S_a(L)} + |S_a(L)| + 2, \ h^3_{S_a(\widetilde{\Q}_{\widehat{\phi}_a})}+h^3_{S_{a\alpha^2}(\widetilde{\Q}_{{\phi}_a})}+|S_a(L)|+1 \big\}.$\end{small}

For \begin{small}$a \in K^{*2}$, $\dim_{\F_3} {\rm Sel}^3(E_a/K)= 2 \dim_{\F_3} {\rm Sel}^\phi(E_a/K) - \dim_{\F_3} \frac{E_a(K)[\phi]}{\phi(E_a(K)[3])} -  \dim_{\F_3} \frac{\Sh(E_a/K)[\phi]}{\phi(\Sh(E_a/K)[3])} \leq 2 \dim_{\F_3} {\rm Sel}^\phi(E_a/K) \leq 2|S_a|+2,$\end{small}   the last inequality is due to Theorem \ref{type1selmer}. As \begin{small}$E_a(K)$\end{small} has a non-trivial $3$-torsion point, it follows that \begin{small}$\text{rk } E_a(K) < 2|S_a| +2$\end{small}, but since \begin{small}$\text{rk } E_a(K)$\end{small} is even, the claim follows. 
\end{proof}

\begin{rem}[Non-trivial \begin{small}${\Sh(E_a/K)[3]}$\end{small}]\label{rmkforsha}
Observe that if \begin{small}$a \notin K^{*2}$,\end{small} then by the duplication formula, \begin{small}$E_a(K)[3]=\{O\}$.\end{small} Then \begin{small}$\dim_{\F_3} \Sh(E_a/K)[3] \geq \dim_{\F_3} \Sh(E_a/K)[\phi] = \dim_{\F_3} {\rm Sel}^\phi(E_a/K) - \dim_{\F_3} \frac{E_a(K)}{\phi(E_a(K))} \geq \dim_{\F_3} {\rm Sel}^\phi(E_a/K) - \dim_{\F_3} \frac{E_a(K)}{3(E_a(K))} \geq h^3_{S_a(L)} - \text{rk } E_a(K).$\end{small} In the Table \ref{tab:type1examples}, we have computed explicit examples where \begin{small}$0 \leq \text{rk } E_a(K) < h^3_{S_a(L)}$\end{small} when \begin{small}$a \notin K^{*2}$.\end{small} 
 This shows that \begin{small}$\Sh(E_a/K)[\phi]$\end{small} (and hence \begin{small}$\Sh(E_a/K)[3]$\end{small}) is non-trivial in all these cases.
\end{rem}


We now compute \begin{small}$\text{rk } E_a(\Q)$\end{small} in terms of \begin{small}$\dim_{\F_3} {\rm Sel}^\phi(E_a/K)$\end{small} and \begin{small}$\dim_{\F_3}\Sh(E_a/K)[\phi]$\end{small}:
\begin{corollary}\label{cortotype1bounds}
We have \begin{small}$\text{rk } E_a(\Q)=  \dim_{\F_3} {\rm Sel}^\phi(E_a/K) - \dim_{\F_3}E_a(K)[\phi] -  \dim_{\F_3}\Sh(E_a/K)[\phi]$.\end{small} 
As a consequence, we see that \begin{small}$\text{rk } E_a(\Q) \le \dim_{\F_3} {\rm Sel}^\phi(E_a/K)$\end{small} and the equality holds 
if and only if \begin{small}$a \notin K^{*2}$\end{small} and  \begin{small}${\Sh(E_a/K)[\phi]} = 0$.\end{small} 
Moreover, if \begin{small}$a \in K^{*2}$,\end{small} then \begin{small}$\text{rk } E_a(\Q) \le |S_a|$\end{small} and the equality holds if and only if \begin{small}${\Sh(E_a/K)[\phi]} = 0$.\end{small}
\end{corollary}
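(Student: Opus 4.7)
The plan is to derive the rank identity directly from the fundamental descent sequence \eqref{eq:defofsha} applied with $\varphi=\phi$, namely
\begin{small}
\[ 0 \to E_a(K)/\phi(E_a(K)) \to {\rm Sel}^\phi(E_a/K) \to \Sh(E_a/K)[\phi] \to 0, \]
\end{small}
which immediately gives $\dim_{\F_3} {\rm Sel}^\phi(E_a/K) = \dim_{\F_3} E_a(K)/\phi(E_a(K)) + \dim_{\F_3} \Sh(E_a/K)[\phi]$. The remaining task is to establish
\begin{small}
\[ \dim_{\F_3} E_a(K)/\phi(E_a(K)) = \text{rk}\, E_a(\Q) + \dim_{\F_3} E_a(K)[\phi], \]
\end{small}
after which rearrangement produces the identity in the corollary.

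To prove this intermediate equality, I will exploit that $E_a$ has CM by $\Z[\zeta]$ defined over $K$ (\S\ref{iso}), so $E_a(K)$ is a finitely generated $\Z[\zeta]$-module on which $\phi$ acts (up to a unit) as multiplication by the uniformizer $\pi=1-\zeta$. By Lemma \ref{ono}(1) the $\Z[\zeta]$-rank of $E_a(K)$ equals $r:=\text{rk}\, E_a(\Q)$, so with $T=E_a(K)_{\rm tors}$ and $F'=E_a(K)/T\cong \Z[\zeta]^r$, the snake lemma applied to $\pi$ on $0\to T\to E_a(K)\to F'\to 0$, together with $F'[\pi]=0$, yields $E_a(K)[\phi]\cong T[\pi]$ and
\begin{small}
\[ 0 \to T/\pi T \to E_a(K)/\phi E_a(K) \to F'/\pi F' \to 0. \]
\end{small}
Since $\Z[\zeta]/\pi\cong \F_3$, I obtain $\dim_{\F_3} F'/\pi F'=r$; and finiteness of $T$ gives $|T/\pi T|=|T[\pi]|$, with both being $\F_3$-vector spaces as they are annihilated by $3=\pi\bar\pi$. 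Summing the dimensions yields the intermediate identity.

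The stated consequences then follow easily. The bound $\text{rk}\, E_a(\Q)\le \dim_{\F_3} {\rm Sel}^\phi(E_a/K)$ is immediate from non-negativity of the two subtracted terms, and equality forces both to vanish; inspecting the points $(0,\pm\sqrt{a})\in E_a[\phi]$ shows $E_a(K)[\phi]\ne 0 \iff a\in K^{*2}$, giving the first biconditional. When $a\in K^{*2}$, one has $\dim_{\F_3} E_a(K)[\phi]=1$, and combining the identity with the upper bound $\dim_{\F_3}{\rm Sel}^\phi(E_a/K)\le |S_a|+1$ from Theorem \ref{type1selmer}(2) produces $\text{rk}\, E_a(\Q)\le |S_a|-\dim_{\F_3}\Sh(E_a/K)[\phi]\le |S_a|$, from which the forward direction of the second biconditional is immediate.

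The main obstacle is the cokernel computation $\dim_{\F_3} F'/\pi F'=r$: a naive argument using $\deg \phi=3$ on $\Z^{2r}$ does not pin down the factor $3^r$ cleanly, which is why I route through the CM structure to identify $F'$ with a free $\Z[\zeta]$-module of rank $r$. Verifying that $\phi$ corresponds to $\pi$ (rather than $\bar\pi$ or some unit multiple) requires a brief inspection of the explicit formula \eqref{eq:defofphi} via the formal group, though the final statement is insensitive to this choice. I also note that the ``only if'' direction of the second biconditional is immediate from the displayed chain of inequalities ($\text{rk}\, E_a(\Q)=|S_a|$ forces $\dim_{\F_3}\Sh(E_a/K)[\phi]=0$), while the reverse direction additionally relies on the $\phi$-Selmer group saturating its Theorem \ref{type1selmer}(2) bound and may warrant a separate remark.
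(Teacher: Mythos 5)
Your proof is correct, but it reaches the identity by a genuinely different route than the paper. The paper stays entirely inside Selmer-group bookkeeping: it combines the five-term sequence \eqref{eq:selmerseq} with the formula $\text{rk } E_a(K)=\dim_{\F_3}{\rm Sel}^3(E_a/K)-\dim_{\F_3}\Sh(E_a/K)[3]-\dim_{\F_3}E_a(K)[3]$ and the two short exact sequences induced by $\phi$ on $E_a(K)[3]$ and $\Sh(E_a/K)[3]$, arrives at $\text{rk } E_a(K)=2\bigl(\dim_{\F_3}{\rm Sel}^\phi(E_a/K)-\dim_{\F_3}E_a(K)[\phi]-\dim_{\F_3}\Sh(E_a/K)[\phi]\bigr)$, and then halves via Lemma~\ref{ono}(1). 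You instead use only the $\phi$-descent sequence \eqref{eq:defofsha} and compute $\dim_{\F_3}E_a(K)/\phi(E_a(K))$ from the CM structure, viewing $E_a(K)$ as a $\Z[\zeta]$-module on which $\phi$ acts as a unit times $1-\zeta$; Lemma~\ref{ono}(1) enters only to identify the $\Z[\zeta]$-rank of $E_a(K)$ with $\text{rk } E_a(\Q)$. Your computation is sound (the snake lemma, the equality $|T/(1-\zeta)T|=|T[1-\zeta]|$ for finite $T$, and $\Z[\zeta]/(1-\zeta)\cong\F_3$ are all applied correctly), and it buys a structural explanation of the torsion correction term and avoids ${\rm Sel}^3$ altogether; its cost is that it is tied to the $j=0$ family, whereas the paper's purely Selmer-theoretic argument would transfer to curves without CM (e.g.\ Type II). Your caveat about the last biconditional is apt: the direction $\Sh(E_a/K)[\phi]=0\Rightarrow\text{rk } E_a(\Q)=|S_a|$ would require ${\rm Sel}^\phi(E_a/K)$ to attain the bound $|S_a|+1$ of Theorem~\ref{type1selmer}(2), which neither your argument nor the paper's (which ends with ``the result follows immediately'') establishes; indeed for $a=16\ell^2$ with $\ell\equiv 2,5\pmod 9$, Theorem~\ref{sylvester} and Corollary~\ref{cortosylvester} give $\dim_{\F_3}{\rm Sel}^\phi(E_a/K)=1$ and $\text{rk } E_a(\Q)=0<1=|S_a|$ while the identity forces $\Sh(E_a/K)[\phi]=0$, so that direction genuinely needs the saturation hypothesis you flag.
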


\begin{proof}
We have the following exact sequences of $\F_3$-modules
\begin{small}
\begin{equation}\label{eq:type1phi3}
0 \longrightarrow E_a(K)[\phi] \longrightarrow E_a(K)[3] \overset{\phi}{\longrightarrow} \phi(E_a(K)[3]) \longrightarrow 0
\end{equation}
\begin{equation}\label{eq:type1sha3}
0 \longrightarrow \Sh(E_a/K)[\phi] \longrightarrow \Sh(E_a/K)[3] \overset{\phi}{\longrightarrow} \phi(\Sh(E_a/K)[3]) \longrightarrow 0
\end{equation}
\end{small}
So, by \eqref{eq:type1phi3} we get, \begin{small}$\dim_{\F_3} E_a(K)[3] - \dim_{\F_3}\phi(E_a(K)[3])  = \dim_{\F_3}E_a(K)[\phi]$\end{small} and by \eqref{eq:type1sha3} we have \begin{small}$\dim_{\F_3}\Sh(E_a/K)[3] - \dim_{\F_3}\phi(\Sh(E_a/K)[3]) = \dim_{\F_3}\Sh(E_a/K)[\phi]$.\end{small}

Also, we know that \begin{small}
$\text{rk } E_a(K) = \dim_{\F_3} {\rm Sel}^3(E_a/K) - \dim_{\F_3}\Sh(E_a/K)[3] - \dim_{\F_3}E_a(K)[3]$\end{small} 
and by \eqref{eq:selmerseq}, we have that \begin{small}$\dim_{\F_3} {\rm Sel}^3(E_a/K)= 2\dim_{\F_3} {\rm Sel}^\phi(E_a/K) - \dim_{\F_3} \frac{\Sh(E_a/K)[\phi]}{\phi(\Sh(E_a/K)[3])} - \dim_{\F_3} \frac{E_a(K)[\phi]}{\phi(E_a(K)[3])}$.
\begin{multline*}
\text{So, } \text{rk } E_a(K) = 2 \dim_{\F_3} {\rm Sel}^\phi(E_a/K) - \dim_{\F_3}E_a(K)[\phi] + \dim_{\F_3}\phi(E_a(K)[3]) - \dim_{\F_3}\Sh(E_a/K)[\phi] + \\ 
 \dim_{\F_3}\phi(\Sh(E_a/K)[3]) - \dim_{\F_3}\Sh(E_a/K)[3] - \dim_{\F_3}E_a(K)[3] \\
 = 2 \big(\dim_{\F_3} {\rm Sel}^\phi(E_a/K) - \dim_{\F_3}E_a(K)[\phi] -  \dim_{\F_3}\Sh(E_a/K)[\phi] \big). 
\end{multline*}
\end{small}
Now, by Lemma \ref{ono}(1), \begin{small}$\text{rk } E_a(K)=2\enspace\text{rk } E_a(\Q)$\end{small}. Also, \begin{small}$\dim_{\F_3} E_a(K)[\phi]=0$\end{small} if \begin{small}$a \notin K^{*2}$\end{small} and $1$ if \begin{small}$a \in K^{*2}$.\end{small} The result then follows immediately from Theorem \ref{type1selmer}.
\end{proof}

\vspace{2mm}
\noindent{\bf Example (1):} Let \begin{small}$a=359$\end{small} and notice that \begin{small}$a \notin K^{*2}$, $S_a=S_a(\widetilde{\Q}_{\widehat{\phi}_a})=S_{a\alpha^2}(\widetilde{\Q}_{{\phi}_a})=\emptyset$.\end{small} Also note that  \begin{small}$h^3_L=2$, $h^3_{\widetilde{\Q}_{\widehat{\phi}_a}}=1$\end{small} and \begin{small}$h^3_{\widetilde{\Q}_{{\phi}_a}}=1$.\end{small} Then by Theorem \ref{type1bounds}, \begin{small}$2 \le \dim_{\F_3} {\rm Sel}^\phi(E_a/K) \le 3$.\end{small} Again using \cite[\S9]{liv}, we obtain the global root number \begin{small}$\omega(E_a/\Q)=-1$.\end{small} As argued in Theorem \ref{type1bounds}, \begin{small}$\omega(E_a/\Q)=(-1)^{\dim_{\F_3}{\rm Sel}^\phi(E_a/K)}$,\end{small} which implies \begin{small}$\dim_{\F_3} {\rm Sel}^\phi(E_a/K)=3$.\end{small} 
   Along with Remark \ref{rmkSel3overQ}, this gives \begin{small}$\dim_{\F_3} {\rm Sel}^3(E_a/\Q) \le 3$\end{small} and \begin{small}$\dim_{\F_3} {\rm Sel}^3(\widehat{E}_a/\Q) \le 3$.\end{small} We also have \begin{small}$\text{rk } E_a(\Q)=\text{rk } \widehat{E}_{a}(\Q)=3$\end{small} (using SageMath). Thus, \begin{small}$\dim_{\F_3} {\rm Sel}^3(E_a/\Q) = \dim_{\F_3} {\rm Sel}^3(\widehat{E}_a/\Q)= 3$\end{small} and   \begin{small}$\Sh({E}_{a}/\Q)[3]=\Sh(\widehat{E}_{a}/\Q)[3]=0$.\end{small} Thus, by Lemma \ref{ono}, \begin{small}$\dim_{\F_3}{\rm Sel}^3(E_a/K)=6$\end{small} and \begin{small}$\Sh(E_a/K)[3]=0$.\end{small} 
Thus, both \begin{small}$\dim_{\F_3}{\rm Sel}^3(E_a/K)$\end{small} and \begin{small}$\dim_{\F_3}{\rm Sel}^\phi(E_a/K)$\end{small} attain the upper bounds in Corollary \ref{corforsel3} and Theorem \ref{type1bounds}.

\vspace{2mm}
\noindent{\bf Example (2):} If we take \begin{small}$a=822$,\end{small} then \begin{small}$a \notin K^{*2}$, $S_a=S_a(\widetilde{\Q}_{\widehat{\phi}_a})=S_{a\alpha^2}(\widetilde{\Q}_{{\phi}_a})=\emptyset$\end{small} and \begin{small}$h^3_L=$ $h^3_{\widetilde{\Q}_{\widehat{\phi}_a}}=1$\end{small} and \begin{small}$h^3_{\widetilde{\Q}_{{\phi}_a}}=0$.\end{small} Therefore, by Theorem \ref{type1bounds}, we get \begin{small}$1 \le \dim_{\F_3} {\rm Sel}^\phi(E_a/K) \le 2$.\end{small} From \cite[\S9]{liv}, we see that \begin{small}$\omega(E_a/\Q)=-1$\end{small} which implies \begin{small}$\dim_{\F_3} {\rm Sel}^\phi(E_a/K)=1$.\end{small} 
As explained in the proof of Theorem \ref{type1bounds},  we have  \begin{small}$\dim_{\F_3}{\rm Sel}^3({E}_a/\Q) \equiv \dim_{\F_3}{\rm Sel}^\phi({E}_a/K) \pmod 2$.\end{small} Similarly, \begin{small}$\dim_{\F_3}{\rm Sel}^3(\widehat{E}_a/\Q) \equiv \dim_{\F_3}{\rm Sel}^\phi({E}_a/K) \pmod 2$.\end{small} So, \begin{small}$\dim_{\F_3}{\rm Sel}^3({E}_a/\Q)=\dim_{\F_3}{\rm Sel}^3(\widehat{E}_a/\Q)=1$\end{small} and hence, \begin{small}$\dim_{\F_3}{\rm Sel}^3(E_a/K)=2$.\end{small}
Thus, both \begin{small}$\dim_{\F_3}{\rm Sel}^3(E_a/K)$\end{small} and \begin{small}$\dim_{\F_3}{\rm Sel}^\phi(E_a/K)$\end{small} achieve their respective lower bounds in Corollary \ref{corforsel3} and Theorem \ref{type1bounds}.

\section{Applications}\label{applications}
 We now highlight some applications of the results obtained  in \S\ref{type1curves} in the subsections \ref{cubesumsubsection}, \ref{positiveprop} and \ref{cubictwistsec} respectively.

\subsection{The cube sum problem}\label{cubesumsubsection}
Recall that a cube-free integer \begin{small}$D > 2$\end{small} can be written as a sum of two rational cubes if and only if \begin{small}$\text{rk } E_{-432D^2}(\Q) > 0$.\end{small} 
The elliptic curve $E_{aD^2}:y^2=x^3+aD^2$ is a cubic twist by $D$ of   \begin{small}$E_a:y^2=x^3+a$\end{small}.  
Let \begin{small}$\ell$\end{small} be a rational prime. The Sylvester's conjecture predicts that \begin{small}$$\text{rk } E_{-432\ell^2}(\Q)=\begin{cases} 0, & \text{ if } \ell \equiv 2,5 \pmod 9\\ 1, & \text{ if } \ell \equiv 4,7,8 \pmod 9\\ 0 \text{ or } 2,& \text{ if } \ell \equiv 1 \pmod 9. \end{cases}$$\end{small}
For \begin{small}$\ell \equiv 2,5 \pmod 9$,\end{small} the proof goes back to the works of P\'epin, Lucas and Sylvester.
 In \cite{dv}, for primes \begin{small}$\ell \equiv 4,7 \pmod 9$,\end{small} assuming that $3$ is not a cube modulo \begin{small}$\ell$,\end{small} it is shown that \begin{small}$\text{rk } E_{-432\ell^2}(\Q)=\text{rk } E_{-432\ell^4}(\Q)=1$.\end{small} 
\cite{rz} considers the primes \begin{small}$\ell \equiv 1 \pmod 9$\end{small} and provides three non-trivial criteria to check whether \begin{small}$\text{rk } E_{-432\ell^2}(\Q)$\end{small} is $0$ or $2$ (assuming the BSD conjecture). 
In \cite{ms, jms}, the authors produce infinitely many primes \begin{small}$\ell \equiv \pm 1 \pmod 9$\end{small} that are cube sums.

 Sylvester also proved that if \begin{small}$\ell \equiv 5 \pmod 9$\end{small} (respectively, if \begin{small}$\ell \equiv 2 \pmod 9$\end{small}), then \begin{small}$2\ell$\end{small} (respectively, \begin{small}$2\ell^2$\end{small}) is not a rational cube sum. If \begin{small}$\ell \equiv 2 \pmod 9$\end{small} (resp. \begin{small}$\ell \equiv 5 \pmod 9$\end{small}), Satg\'e \cite{sa} proved that \begin{small}$2\ell$\end{small} (resp. \begin{small}$2\ell^2$\end{small}) is a cube sum. The cube sum problem for \begin{small}$2\ell, 2\ell^2$\end{small} for primes $\ell \equiv 1 \pmod 3$ and $\ell \equiv 8 \pmod 9$ seems to be not covered in the literature. 

In Corollaries \ref{cortosylvester23} and \ref{cortosylvester2psqr}, we study the cube sum problem for \begin{small}$2\ell$\end{small} and \begin{small}$2\ell^2$.\end{small} For every prime \begin{small}$\ell \ge 5$\end{small} and \begin{small}$D\in\{2\ell, 2\ell^2\}$,\end{small} we determine \begin{small}$\dim_{\F_3}{\rm Sel}^\phi(E_{-432D^2}/K)$\end{small} precisely in Theorems \ref{sylvester2}-\ref{sylvester2psqr2}. Then we use this to prove results on \begin{small}$\text{rk } E_{-432D^2}(\Q)$\end{small}, some of them unconditionally (Theorem \ref{sylvesterintro}) while some other assuming that  \begin{small}$\dim_{\F_3}\Sh(E_a/\Q)[3]$\end{small} is even, in Corollaries \ref{cortosylvester23}-\ref{cortosylvester2psqr}.
Note that \begin{small}$E_{-432D^2}$\end{small} is isomorphic to \begin{small}$E_{16D^2}$\end{small} over \begin{small}$K$\end{small} and they are $3$-isogenous over \begin{small}$\Q$.\end{small} Thus, \begin{small}$\text{rk }E_{-432(2D)^2}(\Q)=\text{rk }E_{-27D^2}(\Q)=\text{rk } E_{D^2}(\Q)$\end{small}. 

Let  \begin{small}$a \in K^{*2}$\end{small} and  \begin{small}$S_a \subset \Sigma_K$\end{small} be as defined in \S\ref{type1global}. Then recall by Theorem \ref{type1selmer}, Props. \ref{boundsgen} and \ref{cohom} that \begin{small}${\rm Sel}^\phi(E_{a}/K) \subset \Big(\frac{A_{S_a}^*}{A_{S_a}^{*3}}\Big)_{N=1}:=\Big(\frac{\OO_{S_a}^*}{\OO_{S_a}^{*3}} \times \frac{\OO_{S_a}^*}{\OO_{S_a}^{*3}}\Big)_{N=1} \subset \Big(\frac{K^*}{K^{*3}} \times \frac{K^*}{K^{*3}}\Big)_{N=1}  \cong H^1(G_K,E_a[\phi])$.\end{small}   
Thus, \begin{small}$(\overline{x},\overline{x}^2) \in \Big(\frac{A_{S_{a}}^*}{A_{S_{a}}^{*3}}\Big)_{N=1}$\end{small} is in \begin{small}${\rm Sel}^\phi(E_{a}/K) $\end{small} if and only if \begin{small}$(\overline{x},\overline{x}^2) \in \text{Im } \delta_{\phi,K_\q}$\end{small} for all \begin{small}$\q$.\end{small}
Also, by Theorem \ref{type1selmer}, \begin{small}$ \dim_{\F_3} {\rm Sel}^\phi(E_{a}/K) \le |S_a|+1$.\end{small} We will use these facts for \begin{small}$a \in \{\ell^2, \ell^4\}$\end{small} in Theorems \ref{sylvester2}-\ref{sylvester2psqr2} to determine \begin{small}$\dim_{\F_3} {\rm Sel}^\phi(E_{a}/K)$.\end{small}

First, we compute \begin{small}$\dim_{\F_3}{\rm Sel}^\phi(E_{\ell^2}/K)$\end{small} for primes \begin{small}$\ell \equiv 2 \pmod 3$.\end{small}
\begin{theorem}\label{sylvester2}
Let \begin{small}$\ell \ge 5$\end{small} be a prime such that \begin{small}$\ell \equiv 2 \pmod 3$\end{small} and \begin{small}$\phi: E_{\ell^2} \to E_{\ell^2}$\end{small} be the $3$-isogeny given in \eqref{eq:defofphi}. Then \begin{small}$$\dim_{\F_3} {\rm Sel}^\phi(E_{\ell^2}/K) =\begin{cases} 1,& \text{ if } \ell \equiv 5 \pmod 9, \\ 2, & \text{ if } \ell \equiv 2,8 \pmod 9.\end{cases}$$\end{small}
\end{theorem}
\begin{proof}
We note that \begin{small}$S_{\ell^2}=\{2\OO_K, \ell\OO_K\}$\end{small} and \begin{small}$\OO^*_{S_{\ell^2}}=\langle \pm \zeta, 2, \ell \rangle=\langle \pm \zeta, 2, 2\ell \rangle$.\end{small} Then \begin{small}${\rm Sel}^\phi(E_{\ell^2}/K) \subset \Big(\frac{A_{S_{\ell^2}}^*}{A_{S_{\ell^2}}^{*3}}\Big)_{N=1} = \langle(\overline{\zeta}^2,\overline{\zeta}), (\overline{4},\overline{2}), (\overline{4\ell}^2,\overline{2\ell})\rangle$\end{small} 
 and \begin{small}$0 \le \dim_{\F_3} {\rm Sel}^\phi(E_{\ell^2}/K) \le 3$.\end{small}
 
From Prop. \ref{kumcharnot3sqr}(2), \begin{small}$\delta_{\phi,K_2}({E}_{\ell^2}(K_2)) \cap \big(A_2^*/A_2^{*3}\big)_{N=1}=\{(\overline{1},\overline{1})\}$\end{small} and hence \begin{small}$(\overline{\zeta}^2,\overline{\zeta}) \notin \text{Im } \delta_{\phi,K_2}$.\end{small}
Observe that \begin{small}$\delta_{\phi,K_\q}(0,\ell)=(\frac{1}{\overline{2\ell}}, \ \overline{2\ell})=(\overline{4\ell}^2, \ \overline{2\ell})$\end{small} for all \begin{small}$\q$\end{small} and hence \begin{small}$(\overline{4\ell}^2, \ \overline{2\ell}) \in {\rm Sel}^\phi(E_{\ell^2}/K)$.\end{small}  Thus, \begin{small}$1 \le \dim_{\F_3} {\rm Sel}^\phi(E_{\ell^2}/K) \le 2$\end{small} for \begin{small}$\ell \equiv 2 \pmod 3$.\end{small}

We now investigate the remaining generator \begin{small}$(\overline{4}, \ \overline{2})$\end{small} of \begin{small}$\Big(\frac{A_{S_{\ell^2}}^*}{A_{S_{\ell^2}}^{*3}}\Big)_{N=1}$.\end{small} Note that for primes \begin{small}$\q \notin S_{\ell^2} \cup \{\p\}$,\end{small} one has  \begin{small}$(\overline{4}, \ \overline{2}) \in \big(A_\q^*/A_\q^{*3}\big)_{N=1}= \text{Im } \delta_{\phi,K_\q}$\end{small} (Prop. \ref{kumcharnot3sqr}(1)). Thus it reduces to verify whether \begin{small}$(\overline{4}, \ \overline{2}) \in \text{Im } \delta_{\phi,K_\q}$\end{small} for \begin{small}$\q \in S_{\ell^2} \cup \{\p\}$.\end{small} Notice that the cubic residues \begin{small}$\big(\frac{2}{\ell}\big)_3=\big(\frac{\ell}{2}\big)_3=1$,\end{small} as \begin{small}$\ell \equiv 2 \pmod 3$.\end{small} Hence, \begin{small}$(\overline{4}, \ \overline{2})=(\overline{1},\overline{1}) = \big({A_{\ell}^*}/{A_{\ell}^{*3}}\big)_{N=1} \cap \text{Im } \delta_{\phi,K_\ell}$.\end{small} Similarly, using \begin{small}$\big(\frac{\ell}{2}\big)_3=1$,\end{small} we get \begin{small}$(\overline{4}, \ \overline{2})=(\overline{4\ell}^2, \ \overline{2\ell}) \in \text{Im } \delta_{\phi,K_2}$.\end{small} Thus, it remains to check whether or not \begin{small}$(\overline{4},\overline{2}) \in \text{Im } \delta_{\phi,K_\p}$.\end{small}

Let us first consider the primes \begin{small}$\ell \equiv 2,8 \pmod 9$.\end{small} Then \begin{small}$4\ell^2 \equiv 2\ell \equiv 1 \pmod 3$,\end{small} which gives \begin{small}$4\ell^2, 2\ell \in 1+3\Z_3 \subset U^2_{K_\p}$;\end{small} whereas \begin{small}$4\ell^2, 2\ell \notin U^3_{K_\p}$.\end{small} Note that \begin{small}$\frac{U^3_{K_\p}}{U^4_{K_\p}} \cong \frac{V_3}{A_\p^{*3}}$\end{small} by Lemma \ref{V3U3U4} and \begin{small}$\frac{V_3}{A_\p^{*3}} \subset \text{Im } \delta_{\phi,K_\p}$\end{small} by Prop. \ref{V3fortype1}. It follows that \begin{small}$\text{Im } \delta_{\phi,K_\p} = \langle (\overline{4\ell}^2,\overline{2\ell}), (\overline{(1+\p^3)^2},\overline{(1+\p^3)}) \rangle \cong \frac{U^2_{K_\p}}{U^4_{K_\p}}$.\end{small} As \begin{small}$\overline{4} \in \frac{U^2_{K_\p}}{U^4_{K_\p}}$,\end{small} we see that \begin{small}$(\overline{4},\overline{2}) \in \text{Im } \delta_{\phi,K_\p}$.\end{small} Hence, \begin{small}$\dim_{\F_3} {\rm Sel}^\phi(E_{\ell^2}/K) =2$,\end{small} when \begin{small}$\ell \equiv 2,8 \pmod 9$.\end{small}

Finally, we take \begin{small}$\ell \equiv 5 \pmod 9$.\end{small} In this situation, \begin{small}$\ell^2-1 \equiv -3 \pmod 9$, \ $\exists \ \alpha \in 1+3\Z_3$\end{small} such that \begin{small}$\ell^2-1=-3\alpha^2$.\end{small} Consequently, we obtain a point \begin{small}$P:=(-1,\p\zeta\alpha) \in E_{\ell^2}(K_\p)$.\end{small} We compute that \begin{small}$\delta_{\phi,K_\p}(P)$\end{small} is \begin{small}$(\overline{\zeta}^2, \ \overline{\zeta})$\end{small} if  \begin{small}$\ell \equiv 14 \pmod{27}$, \  $(\overline{\zeta^2(1+\p^3)^2}, \ \overline{\zeta(1+\p^3)})$\end{small} if \begin{small}$\ell \equiv 5 \pmod{27}$\end{small} and \begin{small}$(\overline{\zeta(1+\p^3)}, \ \overline{\zeta^2(1+\p^3)^2})$\end{small} if \begin{small}$\ell \equiv 23 \pmod{27}$.\end{small}  As \begin{small}$(\overline{(1+\p^3)^2}, \ \overline{(1+\p^3)}) \in \frac{V_3}{A_\p^{*3}} \subset \text{Im } \delta_{\phi,K_\p}$,\end{small} we conclude that \begin{small}$(\overline{\zeta}^2, \ \overline{\zeta}) \in \text{Im } \delta_{\phi,K_\p}$.\end{small} In fact, \begin{small}$\text{Im } \delta_{\phi,K_\p} = \langle(\overline{\zeta}^2, \ \overline{\zeta}), (\overline{(1+\p^3)^2}, \ \overline{(1+\p^3)})\rangle$.\end{small} 
 Let \begin{small}$f\big({U^2_{K_\p}}/{U^4_{K_\p}}\big)$\end{small} be the isomorphic copy of \begin{small}${U^2_{K_\p}}/{U^4_{K_\p}}$\end{small} under the isomorphism \begin{small}$f:{U^1_{K_\p}}/{U^4_{K_\p}} \isomto  \big({A_\p^*}/{A_\p^{*3}}\big)_{N=1}$.\end{small}  As  \begin{small}$\overline{\zeta} \notin {U^3_{K_\p}}/{U^4_{K_\p}}$,\end{small} we see that \begin{small}$\text{Im } \delta_{\phi,K_\p} \cap f\big(\frac{U^2_{K_\p}}{U^4_{K_\p}}\big) \cong \frac{U^3_{K_\p}}{U^4_{K_\p}}.$\end{small} Since \begin{small}$\overline{4} \in \frac{U^2_{K_\p}}{U^4_{K_\p}} \setminus \frac{U^3_{K_\p}}{U^4_{K_\p}}$,\end{small} we deduce that \begin{small}$(\overline{4},\overline{2}) \notin \text{Im } \delta_{\phi,K_\p}$\end{small} and hence, is not in \begin{small}${\rm Sel}^\phi(E_{\ell^2}/K)$.\end{small} 
 
 We have shown above for \begin{small}$\ell \equiv 5 \pmod 9$\end{small} that \begin{small}$(\overline{4\ell^2}, \ \overline{2\ell}) \in {\rm Sel}^\phi(E_{\ell^2}/K)$\end{small} and \begin{small}$(\overline{4}, \ \overline{2}), (\overline{\zeta}^2, \ \overline{\zeta}) \notin {\rm Sel}^\phi(E_{\ell^2}/K)$.\end{small} There are four subgroups of \begin{small}$\Big(\frac{A_{S_{\ell^2}}^*}{A_{S_{\ell^2}}^{*3}}\Big)_{N=1}$\end{small} of order $9$ containing \begin{small}$(\overline{4\ell}^2, \ \overline{2\ell})$.\end{small} 
To conclude that \begin{small}$\dim_{\F_3} {\rm Sel}^\phi(E_{\ell^2}/K) = 1$,\end{small} it suffices to show that 
\begin{small}$\langle (\overline{4\zeta}^2, \ \overline{2\zeta}) \rangle \not\subset {\rm Sel}^\phi(E_{\ell^2}/K)$\end{small} and \begin{small}$\langle (\overline{2\zeta}^2, \ \overline{4\zeta}) \rangle \not\subset {\rm Sel}^\phi(E_{\ell^2}/K)$.\end{small} First we consider \begin{small}$(\overline{4\zeta}^2, \ \overline{2\zeta})$.\end{small} As \begin{small}$\big(\frac{2}{\ell}\big)_3=1$,\end{small} we identify \begin{small}$(\overline{4\zeta}^2, \ \overline{2\zeta})$\end{small} with \begin{small}$(\overline{\zeta}^2, \ \overline{\zeta})$\end{small} in \begin{small}$\big(\frac{A_\ell^*}{A_\ell^{*3}}\big)_{N=1}.$\end{small} Recall that \begin{small}$\text{Im } \delta_{\phi,K_\ell} \cap \big(\frac{A_\ell^*}{A_\ell^{*3}}\big)_{N=1} = \{(\overline{1},\overline{1})\}$\end{small} by Prop. \ref{kumcharnot3sqr}(2).  Given \begin{small}$\ell \equiv 5 \pmod 9$,\end{small} we have \begin{small}$(\overline{4\zeta}^2, \ \overline{2\zeta})=(\overline{\zeta}^2, \ \overline{\zeta}) \notin \text{Im } \delta_{\phi,K_\ell}$\end{small} and so it is not in \begin{small}${\rm Sel}^\phi(E_{\ell^2}/K)$.\end{small} A similar argument works for \begin{small}$(\overline{2\zeta}^2, \ \overline{4\zeta})$.\end{small} This shows \begin{small}$\dim_{\F_3} {\rm Sel}^\phi(E_{\ell^2}/K) =1$,\end{small} when \begin{small}$\ell \equiv 5 \pmod 9$.\end{small}
\end{proof}

\noindent We use a similar method as in Theorem \ref{sylvester2} to obtain \begin{small}$\dim_{\F_3}{\rm Sel}^\phi(E_{\ell^4}/K)$\end{small}  for \begin{small}$\ell \equiv 2 \pmod 3$\end{small}:
\begin{theorem}\label{sylvester2psqr}
Let \begin{small}$\ell \ge 5$\end{small} be a prime such that \begin{small}$\ell \equiv 2 \pmod 3$\end{small} and \begin{small}$\phi: E_{\ell^4} \to E_{\ell^4}$\end{small} be the $3$-isogeny in \eqref{eq:defofphi}. Then \begin{small}$$\dim_{\F_3} {\rm Sel}^\phi(E_{\ell^4}/K) =\begin{cases} 1,& \text{ if } \ell \equiv 2 \pmod 9, \\ 2, & \text{ if } \ell \equiv 5,8 \pmod 9. \end{cases}$$\end{small}
\end{theorem}
\begin{proof}
The proof differs from  Theorem \ref{sylvester2} in showing \begin{small}$(\overline{4},\overline{2}) \in \text{Im } \delta_{\phi,K_\p}$\end{small} for \begin{small}$\ell \equiv 5,8\pmod 9$.\end{small} In these cases, one needs to choose a suitable \begin{small}$\alpha$\end{small}  in order to compute \begin{small}$\text{Im } \delta_{\phi,K_\p}$.\end{small}
\end{proof}

In the following theorem, we compute \begin{small}$\dim_{\F_3} {\rm Sel}^\phi(E_{\ell^2}/K)$\end{small} for primes \begin{small}$\ell \equiv 1 \pmod 3$.\end{small}
\begin{theorem}\label{sylvester3}
Let \begin{small}$\ell \equiv 1 \pmod 3$\end{small} be a prime and \begin{small}$\phi: E_{\ell^2} \to E_{\ell^2}$\end{small} be the $3$-isogeny given in \eqref{eq:defofphi}. Let the prime \begin{small}$\ell$\end{small} split as \begin{small}$\pi_\ell \cdot \pi'_\ell$\end{small} in \begin{small}$\OO_K$\end{small} and \begin{small}$\big(\frac{\cdot}{\cdot}\big)_3$\end{small} be the cubic residue symbol. 
Then \begin{small}$$\dim_{\F_3} {\rm Sel}^\phi(E_{\ell^2}/K) =\begin{cases} 1, & \text{ if } \ell \equiv 1,7 \pmod 9 \text{ and } \big(\frac{2}{\pi_{\ell}}\big)_3 \neq 1, \\ 2, & \text{ if } \ell \equiv 4 \pmod 9, \\ 3, & \text{ if } \ell \equiv 1,7 \pmod 9 \text{ and } \big(\frac{2}{\pi_{\ell}}\big)_3 = 1.\end{cases}$$\end{small}
\end{theorem}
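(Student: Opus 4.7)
My plan follows the template of Theorems \ref{sylvester}, \ref{sylvesterpsqr}, and \ref{sylvester2}: via Theorem \ref{type1selmer} and Proposition \ref{cohom}, embed the Selmer group into an explicit finite-dimensional $\F_3$-vector space, isolate one Selmer class from a rational torsion point, and then test the Selmer conditions against a general element at each bad prime. Since $\ell\equiv 1\pmod 3$ the prime $\ell$ splits as $\ell=\pi_\ell\pi'_\ell$ in $\OO_K$, so $S:=S_{\ell^2}=\{2\OO_K,\pi_\ell\OO_K,\pi'_\ell\OO_K\}$ and
\begin{small}
\[
{\rm Sel}^\phi(E_{\ell^2}/K)\subset \Big(\frac{A_S^*}{A_S^{*3}}\Big)_{N=1}=\langle \alpha_1,\alpha_2,\alpha_3,\alpha_4\rangle\cong \F_3^4,
\]
\end{small}
with basis $\alpha_1:=(\overline{\zeta^2},\overline{\zeta})$, $\alpha_2:=(\overline{4},\overline{2})$, $\alpha_3:=(\overline{\pi_\ell^2},\overline{\pi_\ell})$, $\alpha_4:=(\overline{{\pi'_\ell}^2},\overline{\pi'_\ell})$. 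The torsion point $(0,\ell)\in E_{\ell^2}(K)$ yields $\delta_{\phi,K_\q}(0,\ell)=\alpha_2\alpha_3\alpha_4\in{\rm Sel}^\phi(E_{\ell^2}/K)$ for every $\q$, so $\dim\geq 1$.

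Write $\big(\frac{\zeta}{\pi_\ell}\big)_3=\zeta^z$ and $\big(\frac{2}{\pi_\ell}\big)_3=\zeta^t$ with $z,t\in\F_3$, so $z=0$ iff $\ell\equiv 1\pmod 9$ and $t=0$ iff $\big(\frac{2}{\pi_\ell}\big)_3=1$. I would then impose the local conditions on $\beta=\prod_i\alpha_i^{c_i}$. At $\pi_\ell$: Proposition \ref{kumcharnot3sqr}(2) shows ${\rm Im}\,\delta_{\phi,K_{\pi_\ell}}$ meets $\big(A^*_{\pi_\ell}/A^{*3}_{\pi_\ell}\big)_{N=1}$ only trivially; after subtracting the appropriate multiple of $\delta(0,\ell)$ to kill the $\pi_\ell$-valuation of $\beta$, the residual unit factor must be a cube in $K^*_{\pi_\ell}$. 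Evans's trick (as in the proof of Theorem \ref{sylvester}) forces $\big(\frac{\pi_\ell}{\pi'_\ell}\big)_3=\big(\frac{\pi'_\ell}{\pi_\ell}\big)_3=1$, killing the $\pi'_\ell$-contribution and collapsing the condition to $({\rm I})\ c_1z+(c_2-c_3)t\equiv 0\pmod 3$. Complex conjugation gives $\big(\frac{\zeta}{\pi'_\ell}\big)_3=\big(\frac{\zeta}{\pi_\ell}\big)_3$ and $\big(\frac{2}{\pi'_\ell}\big)_3=\big(\frac{2}{\pi_\ell}\big)_3^{-1}$, so the symmetric argument at $\pi'_\ell$ produces $({\rm II})\ c_1z-(c_2-c_4)t\equiv 0\pmod 3$. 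At $\q=2\OO_K$: $K_2=\Q_2(\zeta)$ is unramified over $\Q_2$ with residue field $\F_4$ satisfying $\F_4^{*3}=\{1\}$, so $\ell\equiv 1\pmod 2$ is automatically a cube; hence $\alpha_3\alpha_4=(\overline{\ell^2},\overline\ell)$ and $\alpha_2/\delta(0,\ell)=(\overline\ell,\overline{\ell^2})$ are both trivial at $2$, and ${\rm Im}\,\delta_{\phi,K_2}=\langle\alpha_2\rangle$. Cubic reciprocity for the primary primes $\pi_\ell,2$ gives $\big(\frac{\pi_\ell}{2}\big)_3=\big(\frac{2}{\pi_\ell}\big)_3=\zeta^t$, i.e.\ $\pi_\ell\equiv\zeta^t\pmod 2$ in $\F_4^*$, so the $2$-adic condition is $({\rm III})\ c_1+t(c_3-c_4)\equiv 0\pmod 3$.

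The delicate step is at $\p\mid 3$. By Proposition \ref{V3fortype1} together with Lemmas \ref{V3U3U4} and \ref{char3}, ${\rm Im}\,\delta_{\phi,K_\p}$ is the index-$3$ subgroup of $(A_\p^*/A_\p^{*3})_{N=1}\cong U^1_{K_\p}/U^4_{K_\p}$ that contains $V_3/A_\p^{*3}\cong U^3_{K_\p}/U^4_{K_\p}$, and hence is determined by its image modulo $V_3/A_\p^{*3}$ in $U^1_{K_\p}/U^3_{K_\p}$. I would pin this down via $\delta(0,\ell)$: the factorisation $4\ell^2-1=(2\ell-1)(2\ell+1)$ shows $v_\p(4\ell^2-1)=2$ when $\ell\equiv 1,7\pmod 9$ (so $\delta(0,\ell)$ has nontrivial image in $U^2/U^3$), but $v_\p(4\ell^2-1)\geq 4$ when $\ell\equiv 4\pmod 9$ (so $\delta(0,\ell)\in V_3/A_\p^{*3}$, and an auxiliary $K_\p$-point, built via the formal group in the style of Propositions \ref{U3cubes}, \ref{V3fortype1} and the proofs of Theorems \ref{sylvester}, \ref{sylvester2}, is required to exhibit the second generator of ${\rm Im}\,\delta_{\phi,K_\p}$). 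This produces the fourth $\F_3$-linear condition $({\rm IV})$.

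Assembling $({\rm I})$--$({\rm IV})$ into a $4\times 4$ system over $\F_3$ and computing its rank delivers the claimed dimensions. For $\ell\equiv 4\pmod 9$ (so $z=1$) the identity $({\rm III})=2({\rm II})-({\rm I})$ holds in $\F_3$, so $({\rm I}),({\rm II}),({\rm III})$ have rank $2$ when $t\neq 0$ (and $({\rm IV})$ is dependent) and rank $1$ when $t=0$ (and $({\rm IV})$ is independent), yielding $\dim=2$ uniformly. For $\ell\equiv 1,7\pmod 9$ with $t=0$, conditions $({\rm I}),({\rm II})$ are vacuous and $({\rm III})$ reduces to $c_1=0$; a case-check shows $({\rm IV})$ is entailed by these, giving $\dim=3$. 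For $\ell\equiv 1,7\pmod 9$ with $t\neq 0$, conditions $({\rm I})$--$({\rm III})$ force $c_1=0$ together with $c_2=c_3=c_4$, and $({\rm IV})$ is again implied, leaving $\dim=1$. The principal obstacle is the $\p$-adic analysis: the auxiliary formal-group point for $\ell\equiv 4\pmod 9$ must be constructed carefully to read off $({\rm IV})$, and then in each case one must verify whether $({\rm IV})$ is independent of or entailed by $({\rm I})$--$({\rm III})$ — this local-global bookkeeping is ultimately what produces the dimension formula.
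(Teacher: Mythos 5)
Your overall strategy is the same as the paper's (same $S$-unit ambient group $\big(A_{S_{\ell^2}}^*/A_{S_{\ell^2}}^{*3}\big)_{N=1}$ of dimension $4$, the same Selmer class coming from $(0,\ell)$, Evans's trick and cubic reciprocity at $\pi_\ell,\pi'_\ell,2$, and the $V_3$-technique at $\p$), and your repackaging of the membership tests as an $\F_3$-linear system in $(c_1,c_2,c_3,c_4)$ is a genuinely cleaner bookkeeping than the paper's enumeration of the $13$ coset generators. Your conditions (I), (II), (III) are correctly derived (the image of $\delta_{\phi,K_\q}$ at each of $\pi_\ell,\pi'_\ell,2$ is the order-$3$ group generated by $\delta_{\phi,K_\q}(0,\ell)$, whose nontrivial elements have nonzero valuation, so after cancelling valuations the unit part must be a local cube), and for $\ell\equiv 1,7\pmod 9$ your identification of ${\rm Im}\,\delta_{\phi,K_\p}$ with $U^2_{K_\p}/U^4_{K_\p}$ via $v_\p(4\ell^2-1)=2$ does give the correct fourth condition; with the primary normalization $\pi_\ell\equiv 1\pmod 3$ one has $-2,\pi_\ell,\pi'_\ell\in U^2_{K_\p}$, so (IV) reads $c_1\equiv 0$, and your dimension counts $1$ and $3$ in those cases then check out. (Two small slips there: for $\ell\equiv 7\pmod 9$, $t=0$, conditions (I),(II) are not vacuous — they also force $c_1=0$ — and the ``case-check'' that (IV) is entailed when $t=0$ is exactly the verification $2,\pi_\ell,\pi'_\ell\in U^2_{K_\p}$ up to cubes, which you should state.)

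The genuine gap is the case $\ell\equiv 4\pmod 9$, which is the heart of the theorem. There $2\ell\equiv 8\pmod 9$, so $\delta_{\phi,K_\p}(0,\ell)$ lands in $V_3/A_\p^{*3}$ and gives no information about which index-$3$ subgroup of $\big(A_\p^*/A_\p^{*3}\big)_{N=1}$ the local image is; in fact it is \emph{not} $U^2_{K_\p}/U^4_{K_\p}$ in this case (the paper's conclusion that $(\overline{4},\overline{2})$ and $(\overline{\pi_\ell^2},\overline{\pi_\ell})$ fail at $\p$, while e.g.\ $(\overline{4\pi_\ell},\overline{2\pi_\ell^2})$ survives when $\big(\tfrac{2}{\pi_\ell}\big)_3=1$, shows the line in $U^1_{K_\p}/U^3_{K_\p}$ depends on the cubic residue data). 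Your proof needs condition (IV) explicitly here, because from (I)--(III) alone (rank $2$ if $t\neq 0$, rank $1$ if $t=0$, using your correct identity $({\rm III})=2({\rm II})-({\rm I})$ when $z=1$) you only get $\dim\le 2$, resp.\ $\le 3$: the asserted pattern ``(IV) dependent when $t\neq0$, independent when $t=0$'' is precisely the statement $\dim=2$, not an input you may assume. Establishing it requires constructing the auxiliary local point (e.g.\ $P=(-1,\p\zeta\alpha)$ with $\ell^2-1=-3\alpha^2$, or a formal-group point as in Prop.\ \ref{V3fortype1}), computing $\delta_{\phi,K_\p}(P)$ modulo $V_3$, and then comparing the resulting functional with the solution space of (I)--(III) in each of the three subcases $\big(\tfrac{2}{\pi_\ell}\big)_3\in\{1,\zeta,\zeta^2\}$ — this is exactly the computation the paper performs in its $\ell\equiv 4\pmod 9$ case analysis, and your proposal defers it rather than carries it out. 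Until that $\p$-adic computation is done, the middle line of the theorem is not proved.
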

\begin{proof}
We write \begin{small}$\ell=\pi_\ell \cdot \pi'_\ell$\end{small} such that \begin{small}$\pi_\ell=m+n\zeta$.\end{small} In this case, it is possible to choose \begin{small}$m \equiv 1 \pmod 3$\end{small} and \begin{small}$n \equiv 0 \pmod 3$.\end{small} Here, \begin{small}$S_{\ell^2}=\{2\OO_K, \pi_\ell, \pi'_\ell\}$\end{small} and  \begin{small}$\OO^*_{S_{\ell^2}}=\langle \pm \zeta, 2, \pi_\ell, \pi'_\ell \rangle=\langle \pm \zeta, 2, \pi_\ell, 2\ell \rangle$.\end{small} Also \begin{small}${\rm Sel}^\phi(E_{\ell^2}/K) \subset \Big(\frac{A_{S_{\ell^2}}^*}{A_{S_{\ell^2}}^{*3}}\Big)_{N=1}=\langle(\overline{\zeta}^2, \ \overline{\zeta}), \  (\overline{4}, \ \overline{2}), \  (\overline{\pi_\ell}^2, \ \overline{\pi_\ell}), \  (\overline{4\ell}^2, \ \overline{2\ell})\rangle$\end{small} and \begin{small}$\dim_{\F_3} {\rm Sel}^\phi(E_{\ell^2}/K) \le 4$.\end{small} As in the previous case, \begin{small}$(\overline{4\ell}^2, \ \overline{2\ell}) \in {\rm Sel}^\phi(E_{\ell^2}/K)$,\end{small} being the image of \begin{small}$(0, \ \ell)$\end{small} under \begin{small}$\delta_{\phi,K_\q}$\end{small} for all \begin{small}$\q$.\end{small} On the other hand, \begin{small}$(\overline{\zeta}^2, \ \overline{\zeta}) \notin \text{Im } \delta_{\phi,K_2}$\end{small} and hence is not in \begin{small}${\rm Sel}^\phi(E_{\ell^2}/K)$.\end{small} This gives \begin{small}$1 \le \dim_{\F_3} {\rm Sel}^\phi(E_{\ell^2}/K) \le 3$.\end{small} Moreover, for primes \begin{small}$\q \notin S_{\ell^2} \cup \{\p\},$\end{small} \begin{small}$(\overline{4}, \ \overline{2}), \ (\overline{\pi_\ell}^2, \ \overline{\pi_\ell}) \in \big({A_\q^*}/{A_\q^{*3}}\big)_{N=1} =\text{Im } \delta_{\phi,K_\q}$.\end{small}

First, consider the case \begin{small}$\ell \equiv 1,7 \pmod 9$\end{small} and \begin{small}$\big(\frac{2}{\pi_{\ell}}\big)_3 = 1$.\end{small} We now  check whether \begin{small}$(\overline{4}, \ \overline{2})$,\end{small}  \begin{small}$(\overline{\pi_\ell}^2, \ \overline{\pi_\ell}) \in \text{Im } \delta_{\phi,K_\q}$\end{small} at primes \begin{small}$\q \in S_{\ell^2} \cup \{\p\}$; \end{small} we start with \begin{small}$(\overline{4}, \ \overline{2})$.\end{small} Our assumption \begin{small}$\big(\frac{2}{\pi_{\ell}}\big)_3 = 1$\end{small} implies \begin{small}$\big(\frac{2}{\pi'_{\ell}}\big)_3 = 1$.\end{small} Thus, \begin{small}$(\overline{4}, \ \overline{2})=(\overline{1},\overline{1}) \in \text{Im } \delta_{\phi,K_{\pi_\ell}}$\end{small} and \begin{small}$\text{Im } \delta_{\phi,K_{\pi'_\ell}}$.\end{small} Using the law of cubic reciprocity, \begin{small}$\big(\frac{\pi_{\ell}}{2}\big)_3 =\big(\frac{\pi'_{\ell}}{2}\big)_3 = 1$.\end{small} This implies \begin{small}$\big(\frac{\ell}{2}\big)_3 = 1$\end{small} and so, \begin{small}$(\overline{4}, \ \overline{2})=(\overline{4\ell}^2, \ \overline{2\ell}) \in \text{Im } \delta_{\phi,K_{2}}$.\end{small} Now, following the proof of Theorem \ref{sylvester2}, \begin{small}$\ell \equiv 2,8 \pmod 9$\end{small} case, we see that \begin{small}$\text{Im } \delta_{\phi,K_\p} = \langle (\overline{4\ell}^2, \ \overline{2\ell}), \  (\overline{(1+\p^3)^2}, \ \overline{(1+\p^3)}) \rangle \cong \frac{U^2_{K_\p}}{U^4_{K_\p}}$.\end{small} 
As \begin{small}$4 \in U^2_{K_\p}$,\end{small} \begin{small}$(\overline{4}, \ \overline{2}) \in \text{Im } \delta_{\phi,K_\p}$\end{small} and thus, it is in \begin{small}${\rm Sel}^\phi(E_{\ell^2}/K)$.\end{small} Next, we investigate \begin{small}$(\overline{\pi_\ell}^2, \ \overline{\pi_\ell})$.\end{small} Again by the trick of Evan, \begin{small}$\big(\frac{\pi_{\ell}}{\pi'_\ell}\big)_3 = 1$,\end{small} we get \begin{small}$(\overline{\pi_\ell}^2, \ \overline{\pi_\ell})=(\overline{1},\overline{1}) \in \text{Im } \delta_{\phi,K_{\pi'_\ell}}$.\end{small} Further, \begin{small}$\big(\frac{\pi_{\ell}}{2}\big)_3 = 1$\end{small} gives \begin{small}$(\overline{\pi_\ell}^2, \ \overline{\pi_\ell})=(\overline{1},\overline{1}) \in \text{Im } \delta_{\phi,K_{2}}$.\end{small} These two observations also imply that \begin{small}$(\overline{\pi_\ell}^2, \ \overline{\pi_\ell})=(\overline{4\ell}^2, \ \overline{2\ell}) \in \text{Im } \delta_{\phi,K_{\pi_\ell}}$.\end{small} Finally, using \begin{small}$\pi_\ell=m+n\zeta$,\end{small} where \begin{small}$m \equiv 1 \pmod 3$\end{small} and \begin{small}$n \equiv 0 \pmod 3$,\end{small} we get \begin{small}$\pi_\ell \equiv 1 \pmod{\p^2}$\end{small} i.e. \begin{small}$\pi_\ell \in U^2_{K_\p}$.\end{small} Now using  \begin{small}$\text{Im } \delta_{\phi,K_\p} \cong \frac{U^2_{K_\p}}{U^4_{K_\p}}$,\end{small} we have \begin{small}$(\overline{\pi_\ell}^2, \ \overline{\pi_\ell}) \in \text{Im } \delta_{\phi,K_\p}$\end{small} and hence \begin{small}$\dim_{\F_3} {\rm Sel}^\phi(E_{\ell^2}/K)=3$.\end{small} 

Next, consider the case \begin{small}$\ell \equiv 1,7 \pmod 9$\end{small} and \begin{small}$\big(\frac{2}{\pi_{\ell}}\big)_3 \neq 1$.\end{small} We already have \begin{small}$(\overline{4\ell}^2, \ \overline{2\ell}) \in {\rm Sel}^\phi(E_{\ell^2}/K)$.\end{small} There are $13$ distinct subgroups of \begin{small}$\Big(\frac{A_{S_{\ell^2}}^*}{A_{S_{\ell^2}}^{*3}}\Big)_{N=1}$\end{small} of order $9$ containing \begin{small}$(\overline{4\ell}^2, \ \overline{2\ell})$.\end{small}
There are $13$ generators corresponding to order $3$ subgroups of \begin{small}$\Big(\frac{A_{S_{\ell^2}}^*}{A_{S_{\ell^2}}^{*3}}\Big)_{N=1} \Big/ \big\langle (\overline{4\ell}^2, \ \overline{2\ell}) \big\rangle$\end{small} in \begin{small}$\Big(\frac{A_{S_{\ell^2}}^*}{A_{S_{\ell^2}}^{*3}}\Big)_{N=1}$\end{small} viz. \begin{small}$(\overline{4}, \ \overline{2})$, \ $(\overline{\zeta}^2, \ \overline{\zeta})$, \ $(\overline{\pi_\ell}^2, \ \overline{\pi_\ell})$, \ $(\overline{4\zeta^2}, \ \overline{2\zeta})$, \ $(\overline{2\zeta^2}, \ \overline{4\zeta})$, \ $(\overline{4\pi^2_\ell}, \ \overline{2\pi_\ell})$, \ $(\overline{4\pi_\ell}, \ \overline{2\pi^2_\ell})$, \ $(\overline{\zeta^2\pi^2_\ell}, \ \overline{\zeta\pi_\ell})$, \ $(\overline{\zeta\pi^2_\ell}, \ \overline{\zeta^2\pi_\ell})$, \ $(\overline{4\zeta^2\pi^2_\ell}, \ \overline{2\zeta\pi_\ell})$, \ $(\overline{2\zeta^2\pi^2_\ell}, \ \overline{4\zeta\pi_\ell})$, \ $(\overline{4\zeta\pi^2_\ell}, \ \overline{2\zeta^2\pi_\ell})$\end{small} and \begin{small}$(\overline{2\zeta\pi^2_\ell}, \ \overline{4\zeta^2\pi_\ell})$.\end{small} A long and tedious calculation using the theory of cubic residues shows that \begin{small}$(\overline{\zeta}^2, \ \overline{\zeta}), \ (\overline{\pi^2_\ell}, \ \overline{\pi_\ell}) \notin \text{Im } \delta_{\phi,K_{2}}$,\end{small} whereas \begin{small}$(\overline{4}, \ \overline{2}), \ (\overline{4\pi^2_\ell}, \ \overline{2\pi_\ell}), \  (\overline{4\pi_\ell}, \ \overline{2\pi^2_\ell}) \notin \text{Im } \delta_{\phi,K_{\pi'_\ell}}$\end{small} and the rest of the generators are not in \begin{small}$\text{Im } \delta_{\phi,K_{\p}}$.\end{small} So, none of these $13$ generators belong to \begin{small}${\rm Sel}^\phi(E_{\ell^2}/K)$.\end{small} Thus we conclude  \begin{small}$\dim_{\F_3} {\rm Sel}^\phi(E_{\ell^2}/K)=1$\end{small} in this case.

Finally, we consider the case \begin{small}$\ell \equiv 4 \pmod 9$.\end{small} We already have \begin{small}$(\overline{4\ell^2}, \ \overline{2\ell}) \in {\rm Sel}^\phi(E_{\ell^2}/K)$.\end{small} We divide this case into $3$ subcases, depending on whether the cubic residue \begin{small}$\big(\frac{2}{\pi_{\ell}}\big)_3$\end{small} is equal to $1$, \begin{small}$\zeta$\end{small} or \begin{small}$\zeta^2$.\end{small} In each of these subcases, using the theory of cubic residues, we examine all the $13$ generators mentioned above and observe the following:

If \begin{small}$\big(\frac{2}{\pi_{\ell}}\big)_3=1$,\end{small} then \begin{small}$(\overline{4\pi_\ell}, \ \overline{2\pi^2_\ell}) \in {\rm Sel}^\phi(E_{\ell^2}/K)$,\end{small} whereas \begin{small}$(\overline{4}, \ \overline{2})$, \ $(\overline{\pi^2_\ell}, \ \overline{\pi_\ell}), \ (\overline{4\pi^2_\ell}, \ \overline{2\pi_\ell}) \notin \text{Im } \delta_{\phi,K_{\p}}$\end{small} and the rest of the generators are not in \begin{small}$\text{Im } \delta_{\phi,K_{\pi'_\ell}}$.\end{small}

If \begin{small}$\big(\frac{2}{\pi_{\ell}}\big)_3=\zeta$,\end{small} then \begin{small}$(\overline{2\zeta\pi^2_\ell}, \ \overline{4\zeta^2\pi_\ell}) \in {\rm Sel}^\phi(E_{\ell^2}/K)$,\end{small} whereas \begin{small}$(\overline{4\zeta^2}, \ \overline{2\zeta}), \ (\overline{4\zeta^2\pi^2_\ell}, \ \overline{2\zeta\pi_\ell}) \notin \text{Im } \delta_{\phi,K_{\pi_\ell}}$\end{small} and \begin{small}$(\overline{\pi^2_\ell}, \ \overline{\pi_\ell}) \notin \text{Im } \delta_{\phi,K_{2}}$,\end{small} while the rest of the generators are not in \begin{small}$\text{Im } \delta_{\phi,K_{\pi'_\ell}}$.\end{small}

If \begin{small}$\big(\frac{2}{\pi_{\ell}}\big)_3=\zeta^2$,\end{small} then \begin{small}$(\overline{2\zeta^2\pi^2_\ell}, \ \overline{4\zeta\pi_\ell}) \in {\rm Sel}^\phi(E_{\ell^2}/K)$\end{small}, whereas \begin{small}$(\overline{2\zeta^2}, \ \overline{4\zeta}), \  (\overline{4\zeta\pi^2_\ell}, \ \overline{2\zeta^2\pi_\ell}) \notin \text{Im } \delta_{\phi,K_{\pi_\ell}}$\end{small} and \begin{small}$(\overline{\pi^2_\ell},\overline{\pi_\ell}) \notin \text{Im } \delta_{\phi,K_{2}}$,\end{small} while the rest of the generators are not in \begin{small}$\text{Im } \delta_{\phi,K_{\pi'_\ell}}$.\end{small}

Thus, for each of these cases, 
\begin{small}$\dim_{\F_3} {\rm Sel}^\phi(E_{\ell^2}/K)=2$,\end{small} when \begin{small}$\ell \equiv 4 \pmod 9$.\end{small}
\end{proof}

The following theorem computes \begin{small}$\dim_{\F_3} {\rm Sel}^\phi(E_{\ell^4}/K)$\end{small} for primes \begin{small}$\ell \equiv 1 \pmod 3$,\end{small} using a method similar to that of Theorem \ref{sylvester3}.
\begin{theorem}\label{sylvester2psqr2}
Let \begin{small}$\ell \equiv 1 \pmod 3$\end{small} be a prime and \begin{small}$\phi: E_{\ell^4} \to E_{\ell^4}$\end{small} be the $3$-isogeny given in \eqref{eq:defofphi}. Let the prime \begin{small}$\ell$\end{small}  split as \begin{small}$\pi_\ell \cdot  \pi'_\ell$\end{small} in \begin{small}$\OO_K$\end{small} and \begin{small}$\big(\frac{\cdot}{\cdot}\big)_3$\end{small} be the cubic residue symbol.  
Then \begin{small}$$\dim_{\F_3} {\rm Sel}^\phi(E_{\ell^4}/K) =\begin{cases} 1, & \text{ if } \ell \equiv 1,4 \pmod 9 \text{ and } \big(\frac{2}{\pi_{\ell}}\big)_3 \neq 1, \\ 2, & \text{ if } \ell \equiv 7 \pmod 9, \\ 3, & \text{ if } \ell \equiv 1,4 \pmod 9 \text{ and } \big(\frac{2}{\pi_{\ell}}\big)_3 = 1.\end{cases}$$\end{small}
\end{theorem}
\begin{proof}
In the case, when \begin{small}$\ell \equiv 1,4 \pmod 9$\end{small} and \begin{small}$\big(\frac{2}{\pi_\ell}\big)_{3} \neq 1$,\end{small} the proof is similar to that of \begin{small}$\ell \equiv 1,7 \pmod 9$\end{small} and \begin{small}$\big(\frac{2}{\pi_\ell}\big)_{3} \neq 1$\end{small} in Theorem \ref{sylvester3}. Also, when \begin{small}$\ell \equiv 1,4 \pmod 9$\end{small} and \begin{small}$\big(\frac{2}{\pi_\ell}\big)_{3} = 1$,\end{small} the proof that \begin{small}$\dim_{\F_3} {\rm Sel}^\phi(E_{\ell^4}/K)=3$\end{small} is similar to that of \begin{small}$\ell \equiv 1,7 \pmod 9$\end{small} and \begin{small}$\big(\frac{2}{\pi_\ell}\big)_{3} = 1$\end{small} in Theorem \ref{sylvester3}. 

Finally, in the case \begin{small}$\ell \equiv 7 \pmod 9$,\end{small} 
    the proof differs from Theorem \ref{sylvester3} in examining the $13$ generators corresponding to the order $3$ subgroups of  \begin{small}$\Big(\frac{A_{S_{\ell^4}}^*}{A_{S_{\ell^4}}^{*3}}\Big)_{N=1} \Big/ \langle (\overline{4\ell},\overline{2\ell}^2) \rangle$\end{small} in \begin{small}$\Big(\frac{A_{S_{\ell^4}}^*}{A_{S_{\ell^4}}^{*3}}\Big)_{N=1}$\end{small} mentioned therein. When \begin{small}$\ell \equiv 7 \pmod 9$,\end{small} we have:

    If \begin{small}$\big(\frac{2}{\pi_{\ell}}\big)_3=1$,\end{small} then \begin{small}$(\overline{4\pi^2_\ell}, \ \overline{2\pi_\ell}) \in {\rm Sel}^\phi(E_{\ell^4}/K)$,\end{small} whereas \begin{small}$(\overline{4}, \ \overline{2})$, \ $(\overline{\pi_\ell}, \ \overline{\pi^2_\ell}), \ (\overline{4\pi_\ell}, \ \overline{2\pi^2_\ell}) \notin \text{Im } \delta_{\phi,K_{\p}}$\end{small} and the rest of the generators are not in \begin{small}$\text{Im } \delta_{\phi,K_{\pi'_\ell}}$.\end{small}

If \begin{small}$\big(\frac{2}{\pi_{\ell}}\big)_3=\zeta$,\end{small} then \begin{small}$(\overline{4\zeta\pi^2_\ell}, \ \overline{2\zeta^2\pi_\ell}) \in {\rm Sel}^\phi(E_{\ell^4}/K)$\end{small}, whereas \begin{small}$(\overline{2\zeta^2}, \ \overline{4\zeta}), \  (\overline{2\zeta^2\pi^2_\ell}, \ \overline{4\zeta\pi_\ell}) \notin \text{Im } \delta_{\phi,K_{\pi_\ell}}$\end{small} and \begin{small}$(\overline{\pi_\ell}, \ \overline{\pi^2_\ell}) \notin \text{Im } \delta_{\phi,K_{2}}$,\end{small} while the rest of the generators are not in \begin{small}$\text{Im } \delta_{\phi,K_{\pi'_\ell}}$.\end{small}

If \begin{small}$\big(\frac{2}{\pi_{\ell}}\big)_3=\zeta^2$,\end{small} then \begin{small}$(\overline{4\zeta^2\pi^2_\ell}, \ \overline{2\zeta\pi_\ell}) \in {\rm Sel}^\phi(E_{\ell^4}/K)$\end{small}, whereas \begin{small}$(\overline{\pi_\ell}, \ \overline{\pi^2_\ell}), \  (\overline{2\zeta\pi^2_\ell}, \ \overline{4\zeta^2\pi_\ell}) \notin \text{Im } \delta_{\phi,K_{2}}$\end{small} and \begin{small}$(\overline{2\zeta^2\pi^2_\ell}, \ \overline{4\zeta\pi_\ell}) \notin \text{Im } \delta_{\phi,K_{\pi_\ell}}$,\end{small} while the rest of the generators are not in \begin{small}$\text{Im } \delta_{\phi,K_{\pi'_\ell}}$.\end{small}

Thus for each of these cases, \begin{small}$\dim_{\F_3} {\rm Sel}^\phi(E_{\ell^4}/K)=2$,\end{small} when \begin{small}$\ell \equiv 7 \pmod 9$.\end{small}
\end{proof}

\noindent Now we are ready to discuss the rational cube sum problems for \begin{small}$2\ell$\end{small}  and  \begin{small}$2\ell^2$,\end{small} where \begin{small}$\ell$\end{small} is a prime. Recall from the proofs of Theorems \ref{sylvester3} and  \ref{sylvester2psqr2} that \begin{small}$\ell \equiv 1 \pmod 3$\end{small}  splits as \begin{small}$\ell =\pi_\ell \cdot \pi_\ell'$\end{small} in \begin{small}$\OO_K$.\end{small} This will be used in the next two corollaries.  
We start with \begin{small}$D=2\ell$\end{small} and use Theorems \ref{sylvester2} and \ref{sylvester3} to compute \begin{small}$\text{rk } E_{-432\cdot(2\ell)^2}(\Q) = \text{rk } E_{\ell^2}(\Q)$.\end{small}

\begin{corollary}\label{cortosylvester23} 
Consider the cube sum problem for \begin{small}$D=2\ell$\end{small} for a prime \begin{small}$\ell \ge 5$.\end{small}
\begin{enumerate}
    \item If \begin{small}$\ell \equiv 5 \pmod 9$,\end{small} then \begin{small}$\text{rk } E_{-432D^2}(\Q) =0$\end{small} and   \begin{small}$\Sh(E_{-432D^2}/\Q)[3]=0$.\end{small}
    \item If \begin{small}$\ell \equiv 1,7 \pmod 9$\end{small} and \begin{small}$\big(\frac{2}{\pi_{\ell}}\big)_3 \neq 1$,\end{small} then \begin{small}$\text{rk } E_{-432D^2}(\Q)  =0$\end{small} and  \begin{small}$\Sh(E_{-432D^2}/\Q)[3]=0$.\end{small}
    \item If \begin{small}$\ell \equiv 2,4,8 \pmod 9$,\end{small} then either \begin{small}$\text{rk } E_{-432D^2}(\Q) =1$\end{small} or \begin{small}$\Sh(E_{-432D^2}/\Q)[3^\infty] \cong \Q_3/\Z_3$.\end{small} 
    
    \noindent In particular, if we assume that \begin{small}$\dim_{\F_3}\Sh(E_{-432D^2}/\Q)[3]$\end{small} is even, then \begin{small}$\text{rk } E_{-432D^2}(\Q) =1$\end{small} and  in fact, \begin{small}$\Sh(E_{-432D^2}/\Q)[3]=0$.\end{small}
    \item If \begin{small}$\ell \equiv 1,7 \pmod 9$\end{small}  and  \begin{small}$\big(\frac{2}{\pi_{\ell}}\big)_3 = 1$,\end{small} then one of the following holds:
    \begin{enumerate}
        \item either \begin{small}$\text{rk } E_{-432D^2}(\Q)  \in \{0,2\}$\end{small} or,
        \item \begin{small}$\text{rk } E_{-432D^2}(\Q) =1$\end{small} and \begin{small}$\Sh(E_{-432D^2}/\Q)[3^\infty] \cong \Q_3/\Z_3$.\end{small}
    \end{enumerate}  
    In particular, if we assume that \begin{small}$\dim_{\F_3}\Sh(E_{-432D^2}/\Q)[3]$\end{small} is even, then \begin{small}$\text{rk } E_{-432D^2}(\Q) \in \{0,2\}$.\end{small} 
\end{enumerate}
\end{corollary}

\begin{proof}
Take \begin{small}$a=-27\ell^2$\end{small} and let  \begin{small}$\widehat{E}_a := E_{\ell^2}$\end{small} and \begin{small}$\phi_a:E_a \to \widehat{E}_a$,\end{small} \begin{small}$\widehat{\phi}_a:\widehat{E}_a \to E_a$\end{small} be the rational $3$-isogenies defined in \S\ref{iso}. 
Note that \begin{small}$|E_a(\Q)[3]|=|E_a(\Q)[\phi_a]|=1$\end{small} and \begin{small}$|\widehat{E}_a(\Q)[3]|=|\widehat{E}_a(\Q)[\widehat{\phi}_a]|=3$.\end{small} Setting \begin{small}$R:=\frac{\Sh(\widehat{E}_{a}/\Q)[\widehat{\phi}_a]}{{\phi}_a(\Sh({E}_a/\Q)[3])}$,\end{small} 
from the exact sequence \eqref{eq:selmerseqoverQ}, we have \begin{small}$\dim_{\F_3} {\rm Sel}^3({E}_a/\Q) = \dim_{\F_3} {\rm Sel}^{\phi}({E}_a/K)-\dim_{\F_3} R-1 $\end{small}. Since \begin{small}$\dim_{\F_3} R$\end{small} is even by  \cite[Prop.~49]{bes}, by Theorem \ref{sylvester2} and Theorem \ref{sylvester3} we obtain 
\begin{small}$$\text{rk } {E}_a(\Q) + \dim_{\F_3} \Sh({E}_a/\Q)[3] = \dim_{\F_3} {\rm Sel}^3({E}_{a}/\Q) = \begin{cases} 0, & \text{if } \ell \equiv 5 \pmod 9 \text{ or } \\ & \quad \ell \equiv 1,7 \pmod 9 \text{ and } \big(\frac{2}{\pi_\ell}\big)_3 \neq 1, \\ 1, & \text{if } \ell \equiv 2,4,8 \pmod 9, \\ 0 \text{ or } 2, & \text{if } \ell \equiv 1,7 \pmod 9 \text{ and } \big(\frac{2}{\pi_\ell}\big)_3 = 1. \end{cases}$$\end{small}
Hence, \begin{small}$\text{rk } {E}_a(\Q) = 0$\end{small} and also \begin{small}$\Sh({E}_a/\Q)[3]=0$\end{small} if either \begin{small}$\ell \equiv 5 \pmod 9$\end{small} or \begin{small}$\ell \equiv 1,7 \pmod 9$\end{small} and \begin{small}$\big(\frac{2}{\pi_\ell}\big)_3 \neq 1$.\end{small}

Now if \begin{small}$\ell \equiv 2,4,8 \pmod 9$,\end{small} then \begin{small}$\dim_{\F_3}{\rm Sel}^3({E}_a/\Q)=1$\end{small} implies either \begin{small}$\text{rk } {E}_{a}(\Q)=1$\end{small} or \begin{small}$\dim_{\F_3}\Sh({E}_a/\Q)[3]=1$.\end{small} It is easy to see that \begin{small}$\dim_{\F_3}\Sh({E}_a/\Q)[3]=1$\end{small} implies \begin{small}$\Sh({E}_a/\Q)[3^\infty] \cong \Q_3/\Z_3$.\end{small}

Finally, when \begin{small}$\ell \equiv 1,7 \pmod 9$\end{small} and \begin{small}$\big(\frac{2}{\pi_\ell}\big)_3 = 1$,\end{small} if \begin{small}$\dim_{\F_3}{\rm Sel}^3({E}_a/\Q)=0$,\end{small} then \begin{small}$\text{rk } {E}_a(\Q) = \Sh({E}_a/\Q)[3]=0$.\end{small} On the other hand, if
\begin{small}$\dim_{\F_3}{\rm Sel}^3({E}_a/\Q)= 2$,\end{small} then either \begin{small}$\text{rk } E_{a}(\Q) \in \{0,2\}$\end{small} or \begin{small}$\text{rk } E_{a}(\Q)=1$\end{small} and \begin{small}$\dim_{\F_3}\Sh(E_{a}/\Q)[3] =1$;\end{small} equivalently, either \begin{small}$\text{rk } E_{a}(\Q) \in \{0,2\}$\end{small} or \begin{small}$\Sh(E_{a}/\Q)[3^\infty] \cong \Q_3/\Z_3$.\end{small}
\end{proof}

\noindent Using Theorems  \ref{sylvester2psqr} and \ref{sylvester2psqr2}, we calculate \begin{small}$\text{rk } E_{-432\cdot(2\ell^2)^2}(\Q)=\text{rk } E_{\ell^4}(\Q)$.\end{small}
The proof is similar to that of Corollary \ref{cortosylvester23} and is omitted.

\begin{corollary}\label{cortosylvester2psqr}
We discuss the cube sum problem for \begin{small}$D=2\ell^2$\end{small} for a prime \begin{small}$\ell \ge 5$.\end{small}
\begin{enumerate}
    \item If \begin{small}$\ell \equiv 2 \pmod 9$,\end{small} then \begin{small}$\text{rk } E_{-432D^2}(\Q) =0$\end{small} and   \begin{small}$\Sh(E_{-432D^2}/\Q)[3]=0$.\end{small}
    \item If \begin{small}$\ell \equiv 1,4 \pmod 9$\end{small} and \begin{small}$\big(\frac{2}{\pi_{\ell}}\big)_3 \neq 1$,\end{small} then \begin{small}$\text{rk } E_{-432D^2}(\Q)  =0$\end{small} and  \begin{small}$\Sh(E_{-432D^2}/\Q)[3]=0$.\end{small}
    \item If \begin{small}$\ell \equiv 5,7,8 \pmod 9$,\end{small} then either \begin{small}$\text{rk } E_{-432D^2}(\Q) =1$\end{small} or \begin{small}$\Sh(E_{-432D^2}/\Q)[3^\infty] \cong \Q_3/\Z_3$.\end{small} 
    
    \noindent In particular, if we assume that \begin{small}$\dim_{\F_3}\Sh(E_{-432D^2}/\Q)[3]$\end{small} is even, then \begin{small}$\text{rk } E_{-432D^2}(\Q) =1$\end{small} and  in fact, \begin{small}$\Sh(E_{-432D^2}/\Q)[3]=0$.\end{small}
    \item If \begin{small}$\ell \equiv 1,4 \pmod 9$\end{small} and \begin{small}$\big(\frac{2}{\pi_{\ell}}\big)_3 = 1$,\end{small} then one of the following holds:
    \begin{enumerate}
        \item either \begin{small}$\text{rk } E_{-432D^2}(\Q)  \in \{0,2\}$\end{small} or,
        \item \begin{small}$\text{rk } E_{-432D^2}(\Q) =1$\end{small} and \begin{small}$\Sh(E_{-432D^2}/\Q)[3^\infty] \cong \Q_3/\Z_3$.\end{small}
    \end{enumerate}  
    In particular, if we assume that  \begin{small}$\dim_{\F_3}\Sh(E_{-432D^2}/\Q)[3]$\end{small} is even, then \begin{small}$\text{rk } E_{-432D^2}(\Q) \in \{0,2\}$.\end{small} \qed
\end{enumerate}
\end{corollary}

\begin{rem}[The assumption on even rank of  {$\Sh(E_{16D^2}/\Q)[3]$}]\label{shaevenremark}
 Let \begin{small}$\Sh(E/\Q)[3^\infty]_{\rm div}$\end{small} denote the maximal divisible subgroup of \begin{small}$\Sh(E/\Q)[3^\infty]$.\end{small} 
Then \begin{small}$\Sh(E/\Q) \cong \Sh(E/\Q)_{{\rm div}} \oplus \big(\Sh(E/\Q)/\Sh(E/\Q)_{{\rm div}}\big)$.\end{small}
By Cassels-Tate pairing, it is well-known that \begin{small}$\dim_{\F_3} \big(\Sh(E/\Q)[3^\infty]/\Sh(E/\Q)[3^\infty]_{{\rm div}}\big)[3]$\end{small} is even. It follows from the above discussion:\\
\begin{small}$\dim_{\F_3} \Sh(E/\Q)_{\rm}[3]$\end{small} is even \begin{small}$\Longleftrightarrow \dim_{\F_3} \Sh(E/\Q)_{\rm div}[3]$\end{small} is even \begin{small}$\Longleftrightarrow \Sh(E/\Q)_\mathrm{div}[3^\infty] \cong \big(\Q_3/\Z_3\big)^{2t}$\end{small} for some $t \ge 0.$

Of course, a part of the BSD conjecture predicts that \begin{small}$\Sh(E/\Q)$\end{small} is finite, in particular \begin{small}$\Sh(E/\Q)[3^\infty]$\end{small} is finite i.e. $t=0$ above. Then it is true that \begin{small}$\dim_{\F_3}\Sh(E/\Q)[3]$\end{small} is even.
Note that  the assumption that  \begin{small}$\dim_{\F_2}\Sh(E/\Q)[2]$\end{small} is even also appears in Mazur-Rubin's work \cite[Conjecture~$\Sh T_2(K)$]{mr}.  
\end{rem}

\begin{rem}\label{adhochsharemove}
We consider the cube sum problem for \begin{small}$D=2\ell$\end{small} and take \begin{small}$\ell$\end{small} to be a prime, \begin{small}$\ell \equiv 1 \pmod 3$.\end{small} The assumption in Corollary \ref{cortosylvester23} that \begin{small}$\dim_{\F_3}\Sh(E_{-432D^2}/\Q)[3]$\end{small} is even, can be removed in the following particular families of elliptic curves, for which we produce explicit rational points of infinite order:
\begin{enumerate}
	\item Let \begin{small}$\ell$\end{small} be of the form \begin{small}$\ell=t^2+27$,\end{small}
	then \begin{small}$(\ell,t\ell) \in E_{-432D^2}(\Q)$\end{small} is a point of infinite order.
	\item Let \begin{small}$\ell$\end{small} be of the form \begin{small}$\ell=s^6+3t^2$,\end{small} where \begin{small}$3 \nmid t$,\end{small}
	then \begin{small}$\big(\frac{3\ell}{s^2},\frac{9t\ell}{s^3}\big) \in E_{-432D^2}(\Q)$\end{small} is a point of infinite order.
	\item Let \begin{small}$\ell$\end{small} be of the form \begin{small}$\ell=s^6+27t^2$,\end{small}
	then \begin{small}$\big(\frac{3\ell}{s^2},\frac{27t\ell}{s^3}\big) \in E_{-432D^2}(\Q)$\end{small} is a point of infinite order.
\end{enumerate}
Thus in all the cases above, \begin{small}$\text{rk } E_{-432D^2}(\Q) \ge 1$\end{small} and \begin{small}$2\ell$\end{small} is a rational cube sum.
\end{rem}

\begin{rem}
In several cases of  Corollaries \ref{cortosylvester23} - \ref{cortosylvester2psqr}, we had rk \begin{small}$E_{a}(\Q) =0$\end{small} and \begin{small}$\Sh({E_a}/\Q)[3]=0$.\end{small} Using Lemma \ref{ono} we also obtain \begin{small}$\Sh({E_a}/K)[3]=0$\end{small} in each of these cases.
\end{rem}

\subsection{Positive proportion of $E_a$'s have $3$-Selmer rank  over $\Q $ at most $1$}\label{positiveprop}
In this subsection, we provide explicit families of the curves \begin{small}$E_a$\end{small} to show that a positive proportion of them have $3$-Selmer rank over \begin{small}$\Q$\end{small} equal to $0$ (respectively $1$). Fix a positive square-free integer \begin{small}$m$\end{small} and a positive integer \begin{small}$N$.\end{small} If  
 \begin{small}$$\liminf_{X \to \infty} \frac{\# \{\text{non-isomorphic  curves $E_a$ with }a \text{ square-free, } a  \equiv m\pmod{N},  Sel^3(E_a/\Q)=0  \text{ and }  0 <a < X  \}}{\# \{\text{non-isomorphic  curves $E_a$ with } a \text{ square-free, } a \equiv m \pmod{N}  \text{ and }  0 <a < X\} }  \ge \frac{1}{2},$$\end{small}
then we say that at least $50\%$ of the elliptic curves \begin{small}$E_a$,\end{small} where \begin{small}$a$\end{small} varies over positive, square-free integers congruent to \begin{small}$m\pmod {N}$\end{small} have \begin{small}$Sel^3(E_a/\Q)=0$.\end{small} Keeping a similar convention in mind, we have the following proposition:

\begin{proposition}\label{prop4.1}
As \begin{small}$k$\end{small} varies over the set of positive square-free  integers, we have that \begin{small}$Sel^3(E_a/\Q)=0$\end{small} for at least $50\%$ of each of the  following families of elliptic curves \begin{small}$E_a$.\end{small}
\begin{small}\begin{center}
\begin{tabular}{l l}
 (a) $a=k$, $k \equiv 7,14, 23,34 \pmod{36}$ & (b) $a=-k$, $k \equiv 1,5,6, 10,14, 17, 25, 26, 33, 34 \pmod{36}$  \\ 
 (c) $a=2^2k$, $k \equiv  22,26,31,35\pmod{36}$ & (d) $a=-2^2k$, $k \equiv  2,6,13,17, 22, 25, 26, 29, 33, 34\pmod{36}$  \\  
 (e) $a=2^4k$, $k \equiv 1,2 \pmod 9$ &  (f) $a=-2^4k$, $k \equiv 1,2, 4, 5,6 \pmod 9$\\
  & (g)  $a=- 3^2k$, $k \equiv 1,10, 13, 22, 25, 34 \pmod{36}$.  
\end{tabular}
\end{center}\end{small}
\end{proposition}

\begin{proof}
    One checks that in all the above mentioned cases \begin{small}$S_a = \emptyset$\end{small} and the global root number \begin{small}$\omega(E_a/\Q) = +1$.\end{small} Let \begin{small}$K^{i} \neq \Q(\zeta_3)$\end{small} be the imaginary quadratic subfield of \begin{small}$L_a:=\Q(\sqrt{a}, \sqrt{-3a})$.\end{small} If \begin{small}$K^{r}$\end{small} is the real quadratic subfield of \begin{small}$L_a$\end{small} then by \cite{her}, we have \begin{small}$h^3_{L_a}=h^3_{K^{i}}+h^3_{K^{r}}$.\end{small}  Moreover, by Scholz reflection principle \cite{sc}, if \begin{small}$h^3_{K^{i}}=0$\end{small} then \begin{small}$h^3_{L_a} =0$.\end{small} By a result of Nakagawa-Horie (which can be conveniently found in \cite[Lemma 2.2]{by}), we see that in each of the above cases for at least $50\%$ of \begin{small}$k$,\end{small} we have \begin{small}$h^3_{K^{i}}=0$.\end{small} Hence, for at least $50\%$ of \begin{small}$a$,\end{small} we have \begin{small}$h^3_{L_a} =0$\end{small} and the result follows from Theorem \ref{type1bounds}.
\end{proof}

As a consequence, we have:
\begin{theorem}\label{dens1} A positive proportion of the elliptic curves \begin{small}$E_a:y^2=x^3+a$\end{small} has \begin{small}$Sel^3(E_a/\Q)=0$.\end{small} More precisely,
 \begin{small}$$\liminf_{X \to \infty} \frac{\# \{\text{non-isomorphic elliptic curves $E_a$ with } Sel^3(E_a/\Q)=0 \text{ and }  0 <a < X\} }{\# \{\text{non-isomorphic elliptic curves $E_a$ with }0 <a < X\}}  > 0.075986 .$$
 $$ \liminf_{X \to \infty} \frac{\# \{\text{non-isomorphic elliptic curves $E_a$ with } Sel^3(E_a/\Q)=0 \text{ and }  -X <a < 0\} }{\# \{\text{non-isomorphic elliptic curves $E_a$ with } -X <a < 0\} }  > 0.184294.$$ \end{small}
\end{theorem}

\begin{proof}
 By Remark \ref{rmkSel3overQ}, it follows that if \begin{small}$Sel^{\phi}(E_a/K) = 0$,\end{small} then both  \begin{small}$Sel^3(E_a/\Q)$\end{small} and \begin{small}$Sel^3(E_{-27a}/\Q)$\end{small} are $0$. Since \begin{small}$$\sum_{\substack{0< n \le X \\ n \equiv m \pmod{M}}} \mu^2(n) = \frac{X}{M \zeta(2)} \Big(\prod_{p \mid \frac{M}{d}} \frac{1}{1- p^{-2} } \Big) + o(X), \text{ where } d=gcd(m,M), \text{ and } d \text{ is sqaure free} $$\end{small} it follows that number of  square-free integers \begin{small}$0 < a =k \le X$,\end{small} with \begin{small}$k \equiv 7,14,23,34 \pmod{36}$\end{small} is of the order \begin{small}$\frac{4X}{36 \zeta(2)} \frac{36}{24} = \frac{X}{6 \zeta(2)}$.\end{small} Hence, the number of elliptic curves \begin{small}$E_a$\end{small} with \begin{small}$Sel^3(E_a/\Q) = 0$,\end{small} with \begin{small}$a$\end{small} satisfying condition (a) of Proposition \ref{prop4.1} is at least \begin{small}$\frac{1}{2} \frac{X}{6 \zeta(2)}$.\end{small} Similarly, the number of square free integers \begin{small}$0 < a = 27k \le X$,\end{small} with  \begin{small}$k \equiv 1,5,6,10,14,17,25,26,33,34 \pmod{36}$\end{small} is of the order \begin{small}$\frac{10X}{27* 24 \zeta(2)}$,\end{small} and the number of elliptic curve with \begin{small}$Sel^3(E_{-27a}/\Q) = 0$,\end{small} with \begin{small}$a$\end{small} satisfying condition (b) of Proposition \ref{prop4.1} is at least \begin{small}$\frac{1}{2} \frac{5X}{12*27 \zeta(2)}$.\end{small} Proceeding in this way, we see that the number of non-isomorphic elliptic curves \begin{small}$E_a$\end{small} with \begin{small}$Sel^3(E_a/\Q)=0$\end{small} and  \begin{small}$0 < a \le X$\end{small} is at least \begin{small}$\frac{1}{2} \frac{X}{\zeta(2)} (\frac{1}{6} + \frac{5}{12*27} + \frac{1}{24} + \frac{5}{4*12*27} + \frac{1}{64}  + \frac{5}{8*16*27} + \frac{1}{4*9*27} ) = \frac{7643}{62208} \frac{X}{\zeta(2)}$.\end{small} Since the number of sixth power free integers positive integers up to \begin{small}$X$\end{small} is of the order \begin{small}$\frac{X}{\zeta(6)}$,\end{small} it follows that
 \begin{small}$$ \liminf_{X \to \infty} \frac{\# \{ \text{non-isomorphic elliptic curves $E_a$ with } Sel^3(E_a/\Q)=0 \text{ and }  0 <a < X  \}}{\# \{ \text{non-isomorphic elliptic curves $E_a$ with }0 <a < X \}}  \ge \frac{7643}{62208} \frac{\zeta(6)}{\zeta(2)} > 0.075986 .$$\end{small}

 The proof in the other case is similar.
\end{proof}

\begin{proposition}\label{den12}
As \begin{small}$k$\end{small} varies over the set of positive square-free  integers, we have that \begin{small}$\dim_{\F_3}Sel^3(E_a/\Q)=1$\end{small} for at least $83.33\%$ of each of the following families of elliptic curves \begin{small}$E_a$.\end{small}
\begin{small}\begin{center}
\begin{tabular}{ l l }
 (a) $a=k$, $k \equiv 2, 3, 10, 11, 19, 22, 26, 30, 31, 35 \pmod{36}$ & (b) $a=-k$, $k \equiv 2,13,22, 29 \pmod{36}$  \\ 
 (c) $a=2^2k$, $k \equiv 2, 3, 7, 10, 11, 14, 19, 23, 30, 34\pmod{36}$ & (d) $a=-2^2k$, $k \equiv 1, 5,10,14 \pmod{36}$  \\  
 (e) $a=2^4k$, $k \equiv 3,4,5,7,8 \pmod 9$ &  (f) $a=-2^4k$, $k \equiv 7,8 \pmod 9$  \\
 (g) $a=3^2k$, $k \equiv 2,11, 14, 23, 26, 35 \pmod{36}$. &
\end{tabular}
\end{center}\end{small}
\end{proposition}

\begin{proof}
    One checks that in all the above mentioned cases \begin{small}$S_a = \emptyset$\end{small} and the global root number \begin{small}$\omega(E_a/\Q) = -1$.\end{small} Let \begin{small}$K^{r}$\end{small} and \begin{small}$K^{i} \neq K$\end{small} be the real and imaginary quadratic subfields of \begin{small}$L_a:= \Q(\sqrt{a}, \sqrt{-3a})$,\end{small} respectively. Recall that  \begin{small}$h^3_{L_a}=h^3_{K^{r}}+h^3_{K^{i}}$\end{small} by \cite{her}. Also by Scholz reflection principle, if \begin{small}$h^3_{K^{r}}=0$\end{small} then \begin{small}$h^3_{L_a} \le 1$.\end{small} Then from Theorem \ref{type1bounds}, it follows that \begin{small}$\dim_{\F_3} \mathrm{Sel}^\phi(E_a/K) =1$.\end{small} Further, by Remark \ref{rmkSel3overQ}, it follows that if \begin{small}$\dim_{\F_3} Sel^{\phi}(E_a/K) = 1$,\end{small} then  \begin{small}$\dim_{\F_3} Sel^3(E_a/\Q) \le 1$\end{small} and hence by the $3$-parity conjecture it follows that \begin{small}$\dim_{\F_3} \mathrm{Sel}^3(E_a/\Q) = 1$.\end{small} Now applying  a result of Nakagawa-Horie (cf. \cite[Lemma 2.2]{by}), we see that in each of the cases (a)-(g), for at least $5/6$-th of \begin{small}$k$,\end{small} we have \begin{small}$h^3_{K^{r}}=0$,\end{small} the result follows.
\end{proof}

Using Proposition \ref{den12} (instead of Proposition \ref{prop4.1}) and arguing as in the proof of Theorem \ref{dens1}, we deduce that a positive proportion of the curves \begin{small}$E_a$\end{small} have $3$-Selmer rank  over \begin{small}$\Q$\end{small} equal to $1$.

\begin{theorem}\label{dens2} A positive proportion of  elliptic curves \begin{small}$E_a$\end{small} has \begin{small}$\dim_{\F_3} Sel^3(E_a/\Q)=1$.\end{small} More precisely,
\begin{small}$$ \liminf_{X \to \infty} \frac{\#\{\text{non-isomorphic elliptic curves $E_a$ with } \dim_{\F_3} Sel^3(E_a/\Q)=1 \text{ and }  0 <a < X\} }{\# \{\text{non-isomorphic elliptic curves $E_a$ with }0 <a < X\}}  > 0.307157 .$$
 $$ \liminf_{X \to \infty} \frac{\# \{\text{non-isomorphic elliptic curves $E_a$ with } \dim_{\F_3} Sel^3(E_a/\Q)=1 \text{ and }  -X <a < 0 \}}{\# \{\text{non-isomorphic elliptic curves $E_a$ with } -X <a < 0 \}}  > 0.126613. $$\end{small} 
\end{theorem}

Combining the results of Theorem \ref{dens1} and Theorem \ref{dens2}, we obtain,

 \begin{corollary} At least $34.7025\%$ of the curves \begin{small}$E_a$\end{small} have \begin{small}$\dim_{\F_3} \Sh(E_a/\Q)[3] <2$.\end{small}  \qed
 \end{corollary}

\subsection{Cubic twists}\label{cubictwistsec}
Let \begin{small}$p \ge 5$\end{small} be a fixed prime.
Now we study the variation of \begin{small}$\phi$\end{small}-Selmer rank in the family \begin{small}$E_{16p\ell^2}$\end{small} of cubic twists by primes \begin{small}$\ell$\end{small} of \begin{small}$E_{16p}$.\end{small} The following result is of a similar flavor as \cite[Theorem~3.8]{spt}.

\begin{proposition}\label{cubictwistsof16p}
Let \begin{small}$p \ge 5$\end{small} be a (fixed) prime and \begin{small}$L= \Q(\zeta, \sqrt{p})$\end{small} be the bi-quadratic field.  We consider the family of elliptic curves \begin{small}$E_{16p\ell^2}$\end{small} obtained as cubic twists of the curve \begin{small}$E_{16p}$\end{small} by rational primes \begin{small}$\ell$.\end{small} The density of primes \begin{small}$\ell$\end{small} such that \begin{small}$\dim_{\F_3} {\rm Sel}^\phi(E_{16p\ell^2}/K) \in \{h^3_L, \ h^3_L+1\}$\end{small} is at least $1/4$.
\end{proposition}
\begin{proof}
Define the set \begin{small}$T_p:=\{\ell \in \Sigma_\Q : \ \ell \nmid 6p, \ \ell \equiv 1 \pmod 3 \ \text{ and } \ \big(\frac{p}{\ell}\big)=-1\}$.\end{small} 
Then \begin{small}$\ell \in T_p$\end{small} satisfies either \begin{small}$\ell \equiv 1 \pmod{12}$\end{small} or \begin{small}$\ell \equiv 7 \pmod{12}$.\end{small} If \begin{small}$\ell \equiv 1 \pmod{12}$,\end{small} then we know that \begin{small}$\big(\frac{p}{\ell}\big)=-1$\end{small} if and only if \begin{small}$\big(\frac{\ell}{p}\big)=-1$.\end{small}
It follows that the density of primes \begin{small}$\ell$\end{small} such that \begin{small}$\ell \equiv 1 \pmod{12}$\end{small} and \begin{small}$\big(\frac{p}{\ell}\big)=-1$\end{small} is $1/8$. Similarly, the density of primes \begin{small}$\ell$\end{small} such that \begin{small}$\ell \equiv 7 \pmod{12}$\end{small} and \begin{small}$\big(\frac{p}{\ell}\big)=-1$\end{small} is again $1/8$. We deduce that the density of the set \begin{small}$T_p$\end{small} is $1/4$.
Now, observe that \begin{small}$S_{16p}=\emptyset$\end{small} and \begin{small}$S_{16p\ell^2}=S_{16p} \sqcup \{\mathfrak{l} \in \Sigma_K : \mathfrak{l} \mid \ell \text{ and } p \in K_{\mathfrak{l}}^{*2}\}$.\end{small} Hence, for all \begin{small}$\ell \in T_p$,\end{small} one has \begin{small}$S_{16p\ell^2}=\emptyset$.\end{small} So by Theorem \ref{type1bounds}, \begin{small}$\dim_{\F_3}{\rm Sel}^\phi(E_{16p\ell^2}/K) \in \{h^3_L, \  h^3_L+1\}.$\end{small}
\end{proof}

\begin{rem}\label{gencubictwistbound}
    In the general case, take \begin{small}$a \notin K^{*2}$\end{small} and consider the cubic twists of \begin{small}$E_a$\end{small} by rational primes \begin{small}$\ell$.\end{small} Then one can show that there is a set of primes \begin{small}$\ell$\end{small} of positive density such that \begin{small}$\dim_{\F_3} {\rm Sel}^\phi(E_{a\ell^2}/K) \in \{h^3_{S_a(L)}, \ h^3_{S_a(L)}+|S_a(L)|+2\}$,\end{small} where \begin{small}$L= \Q(\zeta, \sqrt{a})$.\end{small}
\end{rem}

\section*{Tables}
Table \ref{tab:type1examples}  contains  the bounds on \begin{small}$\dim_{\F_3}{\rm Sel}^\phi(E_a/K)$\end{small} and \begin{small}$\dim_{\F_3}{\rm Sel}^3(E_a/K)$\end{small}, computed  using SageMath/Magma for the elliptic curves \begin{small}$E_a$\end{small}. We define \begin{small}$s_l^\phi$\end{small} and \begin{small}$s_u^\phi$\end{small} to be the lower and upper bounds on the \begin{small}$\text{Sel}^\phi(E_a/K)$\end{small}, respectively, and \begin{small}$s_l^3$\end{small} and \begin{small}$s_u^3$\end{small} to be the lower and upper bounds on the \begin{small}$\text{Sel}^3(E_a/K)$\end{small}, respectively, following Theorem \ref{type1bounds} and Corollary \ref{corforsel3}. Also, $r$ denotes \begin{small}$\text{rk } E_{a}(K)$.\end{small}

\begin{scriptsize}
\begin{table}[h]
    \centering
		\begin{tabular}{ |c||c|c|c|c|c|c|c|c|c|c| }
			\hline
			\multirow{2}{*}{$a$} & \multirow{2}{*}{$S_a$} & \multirow{2}{*}{$S_a(\Q)$} & \multirow{2}{*}{$S_{a\alpha^2}(\Q)$} & \multirow{2}{*}{$|S_a(L)|$} & \multirow{2}{*}{$Cl_{S_a(L)}(L)$} & \multirow{2}{*}{$r$} & \multirow{2}{*}{$s^\phi_l$} & \multirow{2}{*}{$s^\phi_u$} & \multirow{2}{*}{$s^3_l$} & \multirow{2}{*}{$s^3_u$}\\
			& & & & & & & & & &\\
			\hline\hline
			$5$ & $\{2\OO_K\}$ & $\{2\}$ & $\emptyset$ & $2$ & trivial & $2$ & $0$ & $3$  & $2$ & $6$\\
			\hline
			$15$ & $\{\p\}$ & $\{3\}$ & $\emptyset$ & $2$ & trivial & $4$ & $0$ & $3$  & $4$ & $6$\\
			\hline
			$29$ & $\{2\OO_K\}$ & $\{2\}$ & $\emptyset$ & $2$ & trivial & $0$ & $0$ & $3$  & $0$ & $6$\\
			\hline
			$33$ & $\{2\OO_K,\p\}$ & $\{3\}$ & $\{2\}$ & $4$ & trivial & $2$ & $0$ & $5$  & $2$ & $10$\\
			\hline
			$77$ & $\{2\OO_K\}$ & $\{2\}$ & $\emptyset$ & $2$ & $\Z/2$ & $2$ & $0$ & $3$  & $2$ & $6$\\
			\hline
			$85$ & $\{2\OO_K\}$ & $\{2\}$ & $\emptyset$ & $2$ & $\Z/2$ & $0$ & $0$ & $3$  & $0$ & $6$ \\
			\hline
			$1025$ & $\{2\OO_K,5\OO_K\}$ & $\emptyset$ & $\{2,5\}$ & $4$ & trivial & $6$ & $0$ & $5$ & $6$ & $10$ \\
			\hline
			$1373$ & $\{2\OO_K\}$ & $\{2\}$ & $\emptyset$ & $2$ & $\Z/3 \times \Z/3$ & $6$ & $2$ & $5$  & $6$ & $10$   \\
			\hline
			$58$ & $\emptyset$ & $\emptyset$ & $\emptyset$ & $0$ & $\Z/12$ & $2$ & $1$ & $2$  & $2$ & $4$ \\
			\hline
			$62$ & $\emptyset$ & $\emptyset$ & $\emptyset$ & $0$ & $\Z/6$ & $2$ & $1$ & $2$  & $2$ & $4$ \\
			\hline
			$67$ & $\emptyset$ & $\emptyset$ & $\emptyset$ & $0$ & $\Z/6$ & $2$ & $1$ & $2$  & $2$ & $4$ \\
			\hline
			$74$ & $\emptyset$ & $\emptyset$ & $\emptyset$ & $0$ & $\Z/6 \times \Z/2$ & $2$ & $1$ & $2$  & $2$ & $4$ \\
			\hline
			$79$ & $\emptyset$ & $\emptyset$ & $\emptyset$ & $0$ & $\Z/6 \times \Z/3$ & $4$ & $2$ & $3$  & $4$ & $6$ \\
			\hline
			$82$ & $\emptyset$ & $\emptyset$ & $\emptyset$ & $0$ & $\Z/24$ & $2$ & $1$ & $2$  & $2$ & $4$ \\
			\hline
			$83$ & $\emptyset$ & $\emptyset$ & $\emptyset$ & $0$ & $\Z/6$ & $2$ & $1$ & $2$  & $2$ & $4$ \\
			\hline
			$142$ & $\emptyset$ & $\emptyset$ & $\emptyset$ & $0$ & $\Z/12 \times \Z/3$ & $4$ & $2$ & $3$  & $4$ & $6$ \\
			\hline
			$235$ & $\emptyset$ & $\emptyset$ & $\emptyset$ & $0$ & $\Z/12 \times \Z/3$ & $2$ & $2$ & $3$  & $2$ & $6$\\
			\hline
			$254$ & $\emptyset$ & $\emptyset$ & $\emptyset$ & $0$ & $\Z/6 \times \Z/3$ & $2$ & $2$ & $3$  & $2$ & $6$ \\
			\hline
			$326$ & $\emptyset$ & $\emptyset$ & $\emptyset$ & $0$ & $\Z/12 \times \Z/3 $ & $2$ & $2$ & $3$  & $2$ & $6$ \\
			\hline
			$568$ & $\emptyset$ & $\emptyset$ & $\emptyset$ & $0$ & $\Z/12 \times \Z/3 $ & $6$ & $2$ & $3$  & $6$ & $6$ \\
			\hline
			$574$ & $\emptyset$ & $\emptyset$ & $\emptyset$ & $0$ & $\Z/12 \times \Z/6 $ & $4$ & $2$ & $3$  & $4$ & $6$ \\
			\hline
			$730$ & $\emptyset$ & $\emptyset$ & $\emptyset$ & $0$ & $\Z/12 \times \Z/6 \times \Z/2 $ & $6$ & $2$ & $3$  & $6$ & $6$  \\
			\hline
			$842$ & $\emptyset$ & $\emptyset$ & $\emptyset$ & $0$ & $\Z/30 \times \Z/6 $ & $4$ & $2$ & $3$  & $4$ & $6$  \\
			\hline
			$874$ & $\emptyset$ & $\emptyset$ & $\emptyset$ & $0$ & $\Z/12 \times \Z/6 $ & $2$ & $2$ & $3$  & $2$ & $6$  \\
			\hline
			$892$ & $\emptyset$ & $\emptyset$ & $\emptyset$ & $0$ & $\Z/6 \times \Z/3 $ & $6$ & $2$ & $3$  & $6$ & $6$  \\
			\hline
			$895$ & $\emptyset$ & $\emptyset$ & $\emptyset$ & $0$ & $\Z/12 \times \Z/6 $ & $2$ & $2$ & $3$  & $2$ & $6$  \\
			\hline
			$898$ & $\emptyset$ & $\emptyset$ & $\emptyset$ & $0$ & $\Z/60 \times \Z/3 $ & $4$ & $2$ & $3$  & $4$ & $6$  \\
			\hline
			$899$ & $\emptyset$ & $\emptyset$ & $\emptyset$ & $0$ & $\Z/12 \times \Z/6 $ & $6$ & $2$ & $3$  & $6$ & $6$  \\
			\hline
			$934$ & $\emptyset$ & $\emptyset$ & $\emptyset$ & $0$ & $\Z/12 \times \Z/3 $ & $4$ & $2$ & $3$  & $4$ & $6$  \\
			\hline
			$1090$ & $\emptyset$ & $\emptyset$ & $\emptyset$ & $0$ & $\Z/12 \times \Z/6 \times \Z/2$ & $6$ & $2$ & $3$  & $6$ & $6$ \\
		  \hline
		  $1264$ & $\emptyset$ & $\emptyset$ & $\emptyset$ & $0$ & $\Z/6 \times \Z/3$ & $2$ & $2$ & $3$  & $2$ & $6$   \\
		  \hline
			$1339$ & $\emptyset$ & $\emptyset$ & $\emptyset$ & $0$ & $\Z/12 \times \Z/6 \times \Z/2$ & $4$ & $2$ & $3$  & $4$ & $6$   \\
			\hline
			$1342$ & $\emptyset$ & $\emptyset$ & $\emptyset$ & $0$ & $\Z/12 \times \Z/6 \times \Z/2$ & $6$ & $2$ & $3$  & $6$ & $6$  \\
			\hline
			$1384$ & $\emptyset$ & $\emptyset$ & $\emptyset$ & $0$ & $\Z/12 \times \Z/3$ & $4$ & $2$ & $3$  & $4$ & $6$   \\
			\hline
			$1436$ & $\emptyset$ & $\emptyset$ & $\emptyset$ & $0$ & $\Z/12 \times \Z/3$ & $4$ & $2$ & $3$  & $4$ & $6$   \\
			\hline
			$1714$ & $\emptyset$ & $\emptyset$ & $\emptyset$ & $0$ & $\Z/24 \times \Z/3 \times \Z/3$ & $2$ & $3$ & $4$ & $3$ & $8$  \\
		\hline
		$2230$ & $\emptyset$ & $\emptyset$ & $\emptyset$ & $0$ & $\Z/12 \times \Z/6 \times \Z/3$ & $0$ & $3$ & $4$ & $3$ & $8$  \\
		\hline
		$2263$ & $\emptyset$ & $\emptyset$ & $\emptyset$ & $0$ & $\Z/6 \times \Z/6 \times \Z/6$ & $2$ & $3$ & $4$ & $3$ & $8$ \\
		\hline
		$2659$ & $\emptyset$ & $\emptyset$ & $\emptyset$ & $0$ & $\Z/6 \times \Z/3 \times \Z/3$ & $2$ & $3$ & $4$ & $3$ & $8$  \\
		\hline
		$3023$ & $\emptyset$ & $\emptyset$ & $\emptyset$ & $0$ & $\Z/12 \times \Z/3 \times \Z/3$ & $2$ & $3$ & $4$ & $3$ & $8$ \\
		\hline
		$3391$ & $\emptyset$ & $\emptyset$ & $\emptyset$ & $0$ & $\Z/6 \times \Z/3 \times \Z/3$ & $8$ & $3$ & $4$ & $8$ & $8$  \\
		\hline
		$3667$ & $\emptyset$ & $\emptyset$ & $\emptyset$ & $0$ & $\Z/6 \times \Z/6 \times \Z/6$ & $2$ & $3$ & $4$ & $3$ & $8$  \\
		\hline
		$4094$ & $\emptyset$ & $\emptyset$ & $\emptyset$ & $0$ & $\Z/12 \times \Z/6 \times \Z/3$ & $6$ & $3$ & $4$ & $6$ & $8$ \\
		\hline
		$4151$ & $\emptyset$ & $\emptyset$ & $\emptyset$ & $0$ & $\Z/12 \times \Z/6 \times \Z/3$ & $2$ & $3$ & $4$ & $3$ & $8$ \\
		\hline
		$4279$ & $\emptyset$ & $\emptyset$ & $\emptyset$ & $0$ & $\Z/12 \times \Z/6 \times \Z/3$ & $6$ & $3$ & $4$ & $6$ & $8$  \\
		\hline
		$4778$ & $\emptyset$ & $\emptyset$ & $\emptyset$ & $0$ & $\Z/18 \times \Z/6 \times \Z/3$ & $2$ & $3$ & $4$ & $3$ & $8$  \\
		\hline
		$4910$ & $\emptyset$ & $\emptyset$ & $\emptyset$ & $0$ & $\Z/12 \times \Z/6 \times \Z/3$ & $8$ & $3$ & $4$ & $8$ & $8$  \\
		\hline
		$43063$ & $\emptyset$ & $\emptyset$ & $\emptyset$ & $0$ & $\Z/42 \times \Z/3 \times \Z/3 \times \Z/3$ & $4$ & $4$ & $5$ & $4$ & $10$  \\
		\hline
		$51694$ & $\emptyset$ & $\emptyset$ & $\emptyset$ & $0$ & $\Z/24 \times \Z/3 \times \Z/3 \times \Z/3$ & $0$ & $4$ & $5$ & $4$ & $10$  \\
		\hline
		$53507$ & $\emptyset$ & $\emptyset$ & $\emptyset$ & $0$ & $\Z/42 \times \Z/3 \times \Z/3 \times \Z/3$ & $2$ & $4$ & $5$ & $4$ & $10$  \\
		\hline
		$53678$ & $\emptyset$ & $\emptyset$ & $\emptyset$ & $0$ & $\Z/54 \times \Z/3 \times \Z/3 \times \Z/3$ & $2$ & $4$ & $5$ & $4$ & $10$  \\
		\hline
		$529987$ & $\emptyset$ & $\emptyset$ & $\emptyset$ & $0$ & $\Z/210 \times \Z/3 \times \Z/3 \times \Z/3$ & $2$ & $4$ & $5$ & $4$ & $10$ \\
		\hline
		\end{tabular}
\caption{Bounds on the $\phi$- and $3$-Selmer groups of $E_a/K$}
\label{tab:type1examples}
\end{table}
\end{scriptsize}

\clearpage

\end{document}